\documentclass[a4paper,fleqn]{cas-sc}
\usepackage{amsmath}
\usepackage{amssymb}
\usepackage{mathtools}
\usepackage{amsfonts}
\usepackage{graphicx}
\usepackage{nccmath}
\usepackage{amsthm}
\usepackage{hyperref}
\usepackage{amscd}

\usepackage[numbers]{natbib}
\def\tsc#1{\csdef{#1}{\textsc{\lowercase{#1}}\xspace}}
\tsc{WGM}
\tsc{QE}
\tsc{EP}
\tsc{PMS}
\tsc{BEC}
\tsc{DE}
\newtheorem{theorem}{Theorem}[section]
\newtheorem{lemma}[theorem]{Lemma}

\newtheorem{prop}[theorem]{Proposition}
\newtheorem{remark}[theorem]{Remark}
\newtheorem{definition}[theorem]{Definition}

\hypersetup{pdfauthor={Name}}

\newcommand{\GL}{\text{GL}}
\newcommand{\SL}{\text{SL}}

\newcommand{\Um}{\text{Um}}
\newcommand{\E}{\text{E}}
\newcommand{\Sp}{\text{Sp}}
\newcommand{\W}{\text{W}}
\newcommand{\Cm}{\text{Cm}}
\newcommand{\ESp}{\text{ESp}}
\newcommand{\MSE}{\text{MSE}}

\begin{document}
\let\WriteBookmarks\relax
\def\floatpagepagefraction{1}
\def\textpagefraction{.001}

% Short title
\shorttitle{}

% Short author
\shortauthors{}
% Gopal sharma et~al.

% Main title of the paper
\title [mode = title]{Symplectic completion over smooth affine algebras }

% First author
\author[1]{\color{black} Gopal Sharma}
\ead{d22036@students.iitmandi.ac.in}
\author[1]{\color{black} Sampat Sharma}
\ead{sampat@iitmandi.ac.in}

%//////////////////////////////////////////////////////
%////////////////////////////////
\affiliation[1]{organization={SMSS},
    addressline={IIT Mandi}, 
    city={Mandi},
    postcode={175075}, 
    state={Himachal Pradesh},
    country={India}}

%//////////////////////////////////////////////////////////////////////////////////////////////////
%/////////////////////////////////////////////////////////////////////////////////////////////////
% Here goes the abstract

\begin{abstract}   
   In this article, we prove the following results:\\
\noindent \text{(1).} Let $R$ be a smooth affine algebra of dimension $3$ over an algebraically closed field $K$ with $3!\in K$, then we show that $\Um_4(R)=e_1\Sp_4(R)$ and $\Um_4(R[X])=e_1\Sp_4(R[X])$.\\

\noindent \text{(2).}  We also show that if $R$ is a smooth affine algebra of dimension $4$ over an algebraically closed field $K$ with $4!\in K$, and assume that $\W_E(R)$ is divisible, then $\Um_3(R)=e_1\SL_3(R)$. As a consequence it is shown that if $R$ is a smooth affine algebra of dimension $4$ over an algebraically closed field $K$ with $4!\in K$, and assume that $\W_E(R)$ is divisible, then $\Um_4(R)=e_1\Sp_4(R)$.\\

\noindent \text{(3).} We show that if $R$ is a local ring of dimension $3$ with $\frac{1}{3!}\in R$. Then $\Um_4(R[X])=e_1\Sp_4(R[X])$.\\

\noindent \text{(4).} We also show that if $R=\oplus_{i\geq 0}R_i$ is a graded ring over a local ring of dimension $3$ with $\frac{1}{3!}\in R$. Then $\Um_4(R)=e_1\Sp_4(R)$.
\end{abstract}

% Keywords
% Each keyword is seperated by \sep
\begin{keywords}
Unimodular rows \sep Completion of unimodular rows \sep Symplectic group \sep Projective module \sep Stably free module 
\end{keywords}

\maketitle

\section{Introduction}
Throughout this article, $R$ will denote a commutative noetherian ring with $1\neq 0$. Let $R$ be a smooth affine algebra of dimension $d\geq 3$ over a field $K$. It follows from \cite[Chapter IV, Theorem 3.4]{bass2006algebraic}, that $\E_n(R)$ acts transitively on $\Um_n(R)$ if $n\geq d+2$. In \cite[Theorem~1]{suslin1977stably}, Suslin showed that $\SL_n(R)$ acts transitively on $\Um_n(R)$ if $K$ is algebraically closed field and $n=d+1$. In \cite{suslin1982cancellation} it is shown that $\SL_n(R)$ acts transitively on $\Um_n(R)$ if cohomological dimension of $K$ satisfies $c.d.(K)\leq 1,~d!\in K^*$ and $n=d+1$. Furthermore, it was proven in \cite{fasel2012stably} that $\SL_n(R)$ acts transitively on $\Um_n(R)$ if $K$ is algebraically closed field, $(d-1)!\in K^*$ and $n=d$. \\

Let $n\geq 1$ and let $R$ be an affine algebra of dimension $d>1$ over a field $K$. It follows from \cite[Lemma 5.5]{suslin1976serre}, that $\Sp_{2n}(R)$ acts transitively on $\Um_{2n}(R)$ if $2n\geq d+2$. It was proven in \cite{basu2011some} that if $K$ is an infinite field of $c.d.(K)\leq 1$ and $4|(d-1)$, then $\Sp_{d+1}(R)$ acts transitively on $\Um_{d+1}(R)$. Furthermore, if $R$ is a non-singular affine algebra of dimension $d$ with $\frac{1}{d!}\in K$ over an algebraically closed field $K$ such that $4|(d-2)$, then it was shown in \cite{gupta2015optimal} that $\Sp_{d}(R)$ acts transitively on $\Um_d(R)$. \\

By using methods of $\mathbb{A}^1$-homotopy theory, Syed in \cite{syed2024symplectic} proved the following:

\begin{theorem}\label{thm:theorem1.1}
	Let $R$ be a smooth affine algebra of odd dimension $d\geq 3$ over a field $K$ such that $c.d.(K)\leq 1$ and $d!\in K^*$; if $d+1\equiv 0~\text{mod}~4$, furthermore assume that $K$ is perfect. Let $\psi$ be an invertible alternating matrix of rank $d+1$. Then $\Sp(\psi)$ acts transitively on $\Um_{d+1}(R)$.
	
\end{theorem}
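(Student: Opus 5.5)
This is Syed's theorem, quoted from \cite{syed2024symplectic}, and the paper uses it as a black box. Still, here is how I would organise a proof by $\mathbb{A}^1$-homotopy theory. Write $n=d+1$, which is even.

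\textbf{Step 1: reduction to the standard form.} Any two invertible alternating matrices of rank $n$ differ by a stably elementary change of basis $\psi=\alpha^t\psi_n\alpha$. If $\E_n(R)$ does not already act transitively on $\Um_n(R)$, one first reduces to $\alpha\in \SL_n(R)$. The problem then becomes transitivity of $\Sp_n(R)$ (for the standard form) on $\Um_n(R)$, with the twist by $\alpha$ carried along.

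\textbf{Step 2: identification with a homotopy obstruction.} By affine representability (Asok--Hoyois--Wendt), $\Um_n(R)/\E_n(R)\cong[\operatorname{Spec}R,\mathbb{A}^n\setminus 0]_{\mathbb{A}^1}$. Via the fibration $\Sp_{n-2}\to\Sp_n\to\mathbb{A}^n\setminus 0$, one gets
\[
\Um_n(R)/\Sp_n(R)\cong \operatorname{coker}\bigl([\operatorname{Spec}R,\Sp_n]\to[\operatorname{Spec}R,\mathbb{A}^n\setminus0]\bigr).
\]
The goal is to show that this cokernel is trivial.

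\textbf{Step 3: Postnikov tower and the top obstruction.} Analyse the Moore--Postnikov tower of $\mathbb{A}^n\setminus 0$, whose first nontrivial homotopy sheaf is $K^{MW}_n$. Since $\dim R=d=n-1$, the relevant obstructions live in $H^{d}_{Nis}(R,\pi_{d}^{\mathbb{A}^1}(\mathbb{A}^{d+1}\setminus0))$, with $\pi_{d}^{\mathbb{A}^1}(\mathbb{A}^{d+1}\setminus 0)=K^{MW}_{d+1}$. This top group is then compared with the image of $H^d(R,\pi_d^{\mathbb{A}^1}\Sp_{d+1})$.

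Two inputs make the comparison work:
\begin{itemize}
\item Because $c.d.(K)\le1$, $d!\in K^*$ and the base is smooth, $H^d(R,K^{MW}_{d+1})\cong H^d(R,K^M_{d+1})$ is divisible and, after suitable reductions, vanishes up to $d!$-torsion (Suslin's argument). This uses $\eta$-torsion and vanishing of $I^j$-cohomology in top degree over fields of small cohomological dimension.
\item The connecting map $K^{MW}_{d+1}\to \pi_d(\Sp_{d+1})\to\pi_d(\mathbb{A}^{d+1}\setminus 0)$ is multiplication by a known element. For $d$ odd, it is $\frac{(d+1)!}{2}$ or an $h$-multiple, according to Asok--Fasel computations of $\pi_{n-1}(\Sp_n)$.
\end{itemize}

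\textbf{Step 4: the case $d+1\equiv0 \bmod 4$.} Here the relevant map picks up the hyperbolic part differently. Perfectness of $K$ is used to invoke the vanishing of $\mathrm{Ch}^d$-type (mod-$2$ Steenrod) obstructions, via the surjectivity of squaring on $K$ and $H^d(R,\mathbf{I}^{d+1})=0$.

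\textbf{Main obstacle.} The hardest step is Step 3: identifying precisely the image of $\pi_d^{\mathbb{A}^1}(\Sp_{d+1})\to K^{MW}_{d+1}$, and showing that the resulting cokernel in top cohomology is killed by the divisibility coming from $c.d.(K)\le 1$ and $d!\in K^*$.
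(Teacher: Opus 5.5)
You are right that the paper gives no proof of this statement. It is quoted from \cite{syed2024symplectic}, where, as the introduction says, it is obtained by $\mathbb{A}^1$-homotopy methods. So your outline has to stand on its own, and it does not: the step carrying all the content is asserted rather than proved.

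Even for the standard form, Lemma~\ref{lemma:lemma2.3'} shows that transitivity is equivalent to surjectivity of the first-row map $\Sp_{d+1}(R)\to \Um_{d+1}(R)/\E_{d+1}(R)\cong H^d(R,\mathbf{K}^{MW}_{d+1})$. This lemma, rather than the fibre sequence, is what underlies your Step~2.

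Your second ``input'' in Step~3 is exactly this missing theorem, and as formulated it does not make sense:
\begin{itemize}
\item The fibre sequence $\Sp_{d-1}\to\Sp_{d+1}\to\mathbb{A}^{d+1}\setminus 0$ has boundary $\mathbf{K}^{MW}_{d+1}\to\pi^{\mathbb{A}^1}_{d-1}(\Sp_{d-1})$. It gives no map $\mathbf{K}^{MW}_{d+1}\to\pi^{\mathbb{A}^1}_d(\Sp_{d+1})$.
\item Sheaf-level information about $\pi^{\mathbb{A}^1}_d(\Sp_{d+1})\to\mathbf{K}^{MW}_{d+1}$ does not by itself determine the image of $[\operatorname{Spec}R,\Sp_{d+1}]_{\mathbb{A}^1}$. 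That set is not $H^d(R,\pi^{\mathbb{A}^1}_d\Sp_{d+1})$, because $\Sp_{d+1}$ has nontrivial lower homotopy sheaves.
\end{itemize}

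What is needed is an actual construction. For instance, a morphism $s:\mathbb{A}^{d+1}\setminus 0\to\Sp_{d+1}$ (a symplectic analogue of Suslin's completions in Theorem~\ref{thm:theorem3.1}) whose first row, as a self-map of $\mathbb{A}^{d+1}\setminus 0$, has an explicitly computed degree. You then need multiplication by that degree to be surjective on $H^d(R,\mathbf{K}^{MW}_{d+1})$. That is where $c.d.(K)\le 1$ and $d!\in K^*$ should enter, through divisibility. ``Vanishing up to $d!$-torsion'' is not what is needed, and this group is in general nonzero. The constant $\frac{(d+1)!}{2}$ you quote is unsupported.

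Alternatively, the hypotheses are exactly those of Suslin's theorem \cite{suslin1982cancellation}, which gives $\Um_{d+1}(R)=e_1\SL_{d+1}(R)$. One could then compare the first-row images of $\Sp_{d+1}(R)$ and $\SL_{d+1}(R)$, which is the two-step pattern the paper follows for its own results. Either way, this comparison is the whole theorem, and your proposal leaves it open.

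Two other steps are wrong as stated.

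In Step~1, two invertible alternating matrices of rank $d+1$ need not differ by a stably elementary change of basis. Since $\mathrm{Pf}(\epsilon^tV\epsilon)=\mathrm{Pf}(V)\det\epsilon$, this would force equal Pfaffians. For Pfaffian one, the obstruction is precisely the class in $\W_E(R)$, which need not vanish. If $\psi=\alpha^t\psi_{(d+1)/2}\alpha$ for some $\alpha\in\GL_{d+1}(R)$, no reduction is needed, because $\Sp(\psi)=\alpha^{-1}\Sp_{d+1}(R)\alpha$ and $\Um_{d+1}(R)\alpha=\Um_{d+1}(R)$. But the existence of such an $\alpha$ says that $(R^{d+1},\psi)$ is hyperbolic, a nontrivial cancellation-type statement. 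So a general $\psi$ has to be handled directly, e.g.\ by running the obstruction theory with the classifying map of $(R^{d+1},\psi)$ as base point.

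In Step~4, your explanation of the perfectness hypothesis is wrong. Since $d!\in K^*$, either $\operatorname{char}K=0$, where every field is perfect, or $\operatorname{char}K=p>d$, where perfectness concerns $p$-th roots, not square roots. Squaring is not surjective on a finite field of odd characteristic, yet such a field is perfect of cohomological dimension $1$ and satisfies all the hypotheses. The asserted vanishing of $H^d(R,\mathbf{I}^{d+1})$ is likewise unjustified. Nothing in your outline explains why perfectness should be needed only when $4\mid d+1$.
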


\begin{theorem}\label{thm:theorem1.2}
	Let $R$ be a smooth affine algebra of even dimension $d\geq 4$ over an algebraically closed field $K$ with $d!\in K^*$. Let $\psi$ be an invertible alternating matrix of rank $d$. Then $\Sp(\psi)$ acts transitively on $\Um_d(R)$.
\end{theorem}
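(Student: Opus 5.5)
This statement is Syed's result, quoted from \cite{syed2024symplectic}, and it is proved there with $\mathbb{A}^1$-homotopy theory. I would follow the same route.

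\emph{Reduction to homotopy classes.} Write $X=\mathrm{Spec}\,R$, of even dimension $d\ge 4$. Unimodular rows of length $d$ up to the action of $\E_d(R)$ are identified with the set of naive $\mathbb{A}^1$-homotopy classes
\[
[X,\mathbb{A}^d\setminus 0]_{\mathbb{A}^1}.
\]
Since $X$ is smooth affine, Asok--Hoyois--Wendt identifies this with the genuine $\mathbb{A}^1$-homotopy classes. For an invertible alternating matrix $\psi$ of rank $d$, the group $\Sp(\psi)$ contains a conjugate of $\ESp_d(R)$ together with the stabilizer data of the hyperbolic form. The orbit set $\Um_d(R)/\Sp(\psi)$ is therefore a quotient of $[X,\mathbb{A}^d\setminus 0]$ by the action of $[X,\Sp_d]$. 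It suffices to show that every $v\in\Um_d(R)$ is carried to $e_1$ by some element of $\Sp(\psi)$.

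\emph{Obstruction theory.} Analyze the fibre sequence
\[
\Sp_{d-2}\to\Sp_d\to\mathbb{A}^d\setminus 0
\]
(take $d$ even, with $\Sp_d$ acting on the first row) via the Moore--Postnikov tower. Because $\mathbb{A}^d\setminus 0$ is $(d-2)$-$\mathbb{A}^1$-connected with first nonvanishing sheaf $\mathbf{K}^{MW}_d$, and $X$ has dimension $d$, a lift of $v$ to $\Sp_d$ meets obstructions only in $H^{d}(X,\pi_{d-1}^{\mathbb{A}^1}(\mathbb{A}^d\setminus 0))$ and lower groups, which vanish for dimension reasons. Equivalently, by the connecting map $\Omega(\mathbb{A}^d\setminus 0)\to\Sp_{d-2}$ and the long exact sequence, the obstruction is the image of $v$ in
\[
H^d(X,\pi_{d-1}^{\mathbb{A}^1}(\Sp_{d-2})).
\]
Here $\pi_{d-1}(\Sp_{d-2})$ lies at the edge of the stable range, where it is governed by $\mathbf{GW}$/$\mathbf{KSp}$ sheaves. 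Asok--Fasel compute these sheaves: in this degree there is a surjection from $\mathbf{K}^{Sp}_{d}$ onto the relevant sheaf, with kernel related to $\mathbf{K}^M_{d+1}/(\text{torsion})$.

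\emph{Main obstacle.} The hard step is showing that $H^d(X,\mathcal{F})=0$ for the relevant unstable sheaf $\mathcal{F}$. Over algebraically closed $K$ with $d!\in K^*$, I would do this in three steps.
\begin{enumerate}
\item Use Rost--Schmid complexes to identify $H^d(X,\mathcal{F})$ with a quotient of $\bigoplus_{x\in X^{(d)}}\mathcal{F}_{-d}(k(x))$.
\item Use $k(x)=K$ algebraically closed to kill the contributions from $\mathbf{K}^M$ and $\mathbf{I}^j$: the group $H^d(X,\mathbf{K}^M_{d+1})$ is divisible by Suslin's theorem, and Witt-type terms vanish.
\item Use that the remaining torsion (the $\mathbf{K}^M/n$ pieces with $n\mid d!$) has $H^d$ zero because $d!$ is invertible and $H^d(X,\mathbf{K}^M_{d+1}/n)=0$ for affine $X$ over algebraically closed $K$ (Suslin/Bloch--Kato).
\end{enumerate}

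This is exactly where the evenness of $d$ matters: the symplectic homotopy sheaf $\pi_{d-1}(\Sp_{d-2})$ picks up only $\mathbf{K}^{M}$-type terms rather than $\mathbf{GW}$ terms that could contribute nonzero Witt classes. Once the obstruction group vanishes, $v$ lifts to some $\sigma\in\Sp_d(R)$ with $e_1\sigma=v$. Conjugating by a matrix relating $\psi$ to the standard form transfers this to $\Sp(\psi)$, proving transitivity.
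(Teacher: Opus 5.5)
The paper gives no proof of this statement; it is quoted from Syed as background. Your sketch therefore has to stand on its own, and as written it has genuine gaps.

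\textbf{The obstruction theory is set up incorrectly.} Obstructions to lifting $v\colon X\to\mathbb{A}^d\setminus 0$ along $\Sp_d\to\mathbb{A}^d\setminus 0$ live in cohomology with coefficients in the homotopy sheaves of the fibre $\Sp_{d-2}$, not of the base. Moreover, $H^d(X,\pi_{d-1}(\mathbb{A}^d\setminus 0))=H^d(X,\mathbf{K}^{MW}_d)=\widetilde{CH}^d(X)$ is the top-degree group, which certainly does not vanish for dimension reasons.

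More seriously, $\mathbb{A}^d\setminus 0$ is only $(d-2)$-connected while $\dim X=d$. So $[X,\mathbb{A}^d\setminus 0]$ surjects onto $H^{d-1}(X,\mathbf{K}^{MW}_d)$, and the class of $v$ has a primary invariant there; only the secondary layer is a quotient of $H^d(X,\pi_d(\mathbb{A}^d\setminus 0))$. To get $v$ into the image of $[X,\Sp_d]$ you must first show this primary invariant is hit. That requires comparing $\pi_{d-1}(\Sp_d)=\mathbf{KSp}_d$ with $\mathbf{K}^{MW}_d$ and controlling the resulting cokernel (a Milnor-$K$-modulo-a-factorial type term) in degree $d-1$. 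This is where $d!\in K^*$ and algebraic closedness have to enter. Your reduction to a single class in $H^d(X,\pi_{d-1}(\Sp_{d-2}))$ silently drops this whole layer.

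The sheaf you do keep is also misdescribed. $\Sp_{d-2}$ is stable only through $\pi_{d-3}$, so $\pi_{d-2}(\Sp_{d-2})$ is already the first unstable sheaf and $\pi_{d-1}(\Sp_{d-2})$ lies beyond it, not ``at the edge of the stable range''. What actually matters there is the cokernel of $\pi_d(\Sp_d)\to\pi_d(\mathbb{A}^d\setminus 0)$, which needs the nontrivial computation of $\pi_d(\mathbb{A}^d\setminus 0)$. Your three-step vanishing argument is asserted for a sheaf $\mathcal{F}$ that is never identified. Note too that divisibility of $H^d(X,\mathbf{K}^M_{d+1})$ does not by itself make anything vanish.

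\textbf{The final step assumes $\psi$ is standard.} ``Conjugating by a matrix relating $\psi$ to the standard form'' presupposes $\psi=\alpha^t\psi_{d/2}\alpha$ for some $\alpha\in\GL_d(R)$. That is, it assumes the free symplectic module $(R^d,\psi)$ is hyperbolic. Over a general ring this need not hold, and the statement concerns arbitrary invertible alternating $\psi$. The same assumption is hidden in your claim that $\Sp(\psi)$ contains a conjugate of $\ESp_d(R)$; for the $\E$-orbit statement you only have Lemma~\ref{lemma:lemma2.2}, which holds for any $\psi$.

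To repair this, you would either have to prove that every such $\psi$ is standard for these $R$, or abandon the split fibration $\Sp_{d-2}\to\Sp_d\to\mathbb{A}^d\setminus 0$. In the latter case you would study lifts of the map $X\to B\Sp_d$ classifying $(R^d,\psi)$ along $B\Sp_{d-2}\to B\Sp_d$, whose fibre is $\mathbb{A}^d\setminus 0$, modulo the action of $\Sp(\psi)$. The base is $\mathbb{A}^1$-simply connected, so the coefficients are untwisted.

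Finally, the evenness of $d$ is not where the homotopy sheaves come in. An invertible alternating matrix of rank $d$ exists only when $d$ is even, so evenness is forced by the hypotheses.
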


In this article, we investigate the transitivity of the group $\Sp_4(R)$ on $\Um_4(R)$ by classical methods. In particular we prove the following results:

\begin{theorem}\label{thm:theorem1.3}
	Let $R$ be a smooth affine algebra of dimension $3$ over an algebraically closed field $K$ with $3!\in K^*$. Then 
	\begin{enumerate}[(i)]
		\item $\Um_4(R)=e_1\Sp_4(R)$
		\item $\Um_4(R[X])=e_1\Sp_4(R[X]).$
	\end{enumerate}
	  
\end{theorem}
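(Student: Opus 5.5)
We plan to reduce transitivity of $\Sp_4(R)$ on $\Um_4(R)$ to two inputs: Suslin's completion theorem, and the classical comparison between the elementary and symplectic elementary orbits of unimodular rows. Throughout, $\Sp_4(R)$ is taken with respect to the standard form $\psi_2$. The chain of reductions we aim for is
\[
\Um_4(R)=e_1\SL_4(R)\quad\text{and}\quad e_1\E_4(R)=e_1\ESp_4(R)\subseteq e_1\Sp_4(R).
\]
Since $d=3$ and $K$ is algebraically closed, Suslin's theorem \cite[Theorem~1]{suslin1977stably} gives $\SL_4(R)$ transitive on $\Um_4(R)$. So every $v\in\Um_4(R)$ is the first row of some $\sigma\in\SL_4(R)$. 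The task is to replace $\sigma$ by a symplectic matrix. A convenient route is the identity
\[
\Um_n(R)/\E_n(R)=\Um_n(R)/\ESp_n(R)\qquad (n \text{ even}),
\]
due to Chattopadhyay--Rao. With it, it suffices to show that every orbit of $\Um_4(R)/\E_4(R)$ contains a row that is visibly completable to a symplectic matrix.

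For (i), we use the Vaserstein symbol $V:\Um_3(R)/\E_3(R)\to \W_E(R)$. It is a bijection in dimension $2$; in dimension $3$ we will instead use Rao--van der Kallen type results, which state that $\Um_4(R)/\E_4(R)$ for $d=3$ is trivial modulo the image of $\SL_4(R)$. The concrete step is as follows. Take $v\in\Um_4(R)$. By stable range arguments, $v\sim_{\E_4} (a,b,c,d)$ with $(a,b,c)$ unimodular over $R/(d)$ after general position; here $R/(d)$ is a smooth-ish affine algebra of dimension $\le 2$. Since $K$ is algebraically closed, $\Um_3(R/(d))=e_1\E_3(R/(d))$, because the elementary symplectic Witt group $\W_E$ of a $2$-dimensional affine algebra over $\bar K$ vanishes (Suslin/Rao). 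Lifting elementary matrices from $R/(d)$ to $R$ and then using $d$ to clear entries, we reduce $v$ to a row of the form $(1,*,*,*)$ up to $\E_4(R)$. This row lies in $e_1\ESp_4(R)$. Hence $v\in e_1\E_4(R)=e_1\ESp_4(R)\subseteq e_1\Sp_4(R)$. The main subtlety is arranging that the reduction mod $d$ happens on a subring where triviality of $\W_E$ holds. This needs smoothness of $R$, a Bertini/Swan-type general position choice so that $R/(d)$ is smooth of dimension $2$, and $3!\in K^*$ for Rao's results.

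For (ii), we apply the local-global principle for the symplectic group, in the form of Kopeiko's symplectic analogue of Quillen--Suslin patching. Let $v(X)\in\Um_4(R[X])$. For every maximal ideal $\mathfrak m$, $R_{\mathfrak m}$ is regular local of dimension $3$, so by Suslin's theorem on $\E_n(A[X])$ for regular local $A$ we get $v(X)\in e_1\E_4(R_{\mathfrak m}[X])=e_1\ESp_4(R_{\mathfrak m}[X])$. Local-global then yields $v(X)\in v(0)\ESp_4(R[X])$. Finally, $v(0)\in\Um_4(R)$ lies in $e_1\Sp_4(R)$ by (i), so $v(X)\in e_1\Sp_4(R[X])$.

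We expect the hardest part to be (i), specifically the step that moves from $\SL_4$-completability to symplectic completability. If the general position argument fails, we would instead invoke Theorem~\ref{thm:theorem1.1} directly. Its hypotheses hold here: $d=3$ is odd, $c.d.(\bar K)=0$, $3!\in K^*$, $d+1=4\equiv0 \bmod 4$, and an algebraically closed field is perfect. This gives (i) at once, and the above argument then gives (ii).
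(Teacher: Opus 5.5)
Your main argument for (i) has a genuine gap. The step ``since $K$ is algebraically closed, $\Um_3(R/(d))=e_1\E_3(R/(d))$, because $\W_E$ of a $2$-dimensional affine algebra over $\bar K$ vanishes'' is not a theorem. What Suslin proves (Theorem~\ref{thm:theorem3.2}) is completability, $\Um_3=e_1\SL_3$. In dimension $2$ the Vaserstein symbol identifies $\Um_3(A)/\E_3(A)$ with $\W_E(A)$, and this group is not zero in general.

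You can also see that the argument proves too much. Its output is $\Um_4(R)=e_1\E_4(R)$ for every smooth affine threefold over $\bar K$, i.e.\ that the orbit group $\Um_4(R)/\E_4(R)$ is always trivial. That is false in general. If it were true, Lemma~\ref{lemma:lemma2.3'} alone would prove the theorem, and none of the paper's machinery would be needed. Your remark that $\Um_4(R)/\E_4(R)$ is ``trivial modulo the image of $\SL_4(R)$'' just restates Suslin's theorem and says nothing about $\Sp_4$.

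The missing idea is that $v$ need not be elementarily completable; instead you correct an $\SL_4$-completion. The paper proceeds as follows.
\begin{enumerate}[(1)]
\item Take $\sigma\in\SL_4(R)$ with $e_1\sigma=v$ and write $\sigma^t\psi_2\sigma=V(v',w')$.
\item Since $\Um_3(R)=e_1\SL_3(R)$ (Theorem~\ref{thm:theorem3.3}), write $v'=e_1\rho$. Then $V(v',w')=\epsilon^t(1\perp\rho)^t\psi_2(1\perp\rho)\epsilon$ for some $\epsilon\in\E_4(R)$.
\item Lemma~\ref{lemma:lemma2.4} splits $\epsilon=\delta_1(1\perp\epsilon_1)^{-1}$ with $\delta_1\in\ESp_4(R)$. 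This gives $\sigma=\delta(1\perp\rho)\delta_1(1\perp\epsilon_1^{-1})$ with $\delta\in\Sp_4(R)$.
\item The first-row map into $\Cm_4(R)/\E_4(R)$ is multiplicative, so the factors fixing $e_1$ drop out and $v\in e_1\delta\delta_1\E_4(R)$.
\item Lemma~\ref{lemma:lemma2.3'} turns this elementary correction into an $\ESp_4(R)$ one.
\end{enumerate}

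Your fallback for (i) is logically sound. Theorem~\ref{thm:theorem1.1} applies exactly as you check ($d=3$ odd, $c.d.(K)=0$, $3!\in K^*$, $K$ perfect, $\psi=\psi_2$), and (i) is a special case of it. But then your proof of (i) is just a citation of Syed's $\mathbb{A}^1$-homotopy result, which is what the paper sets out to reprove classically.

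Your treatment of (ii) is correct and differs from the paper's. The paper proves $\Um_4(R[X])=e_1\SL_4(R[X])$ and $\Um_3(R[X])=e_1\SL_3(R[X])$ (via Lemmas~\ref{lemma:lemma2.5} and \ref{lemma:lemma2.6}) and reruns the argument of (i) over $R[X]$. You instead show $v(X)\in v(0)\ESp_4(R[X])$ by local--global patching and then apply (i) to $v(0)$. The local input $\Um_4(R_{\mathfrak m}[X])=e_1\E_4(R_{\mathfrak m}[X])$ does hold. The correct references are Lindel (projectives over $R_{\mathfrak m}[X]$ are free) together with Vorst ($\SL_n(R_{\mathfrak m}[X])=\E_n(R_{\mathfrak m}[X])$ for $n\geq 3$), rather than ``Suslin's theorem''. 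You can also replace Kopeiko by Suslin's local--global principle for $\E_4$-orbits followed by Lemma~\ref{lemma:lemma2.3'}. This route uses the regularity of the local rings directly and avoids any group structure on $\Cm_4(R[X])/\E_4(R[X])$. However, it makes (ii) depend on (i), so as written it too rests on Syed's theorem.
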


 \begin{theorem}\label{thm:theorem1.4}
 		Let $R$ be a smooth affine algebra of dimension $4$ over an algebraically closed field $K$. Assume that $4!\in K^*$ and $\W_E(R)$ is divisible. Then 
 		$\Um_3(R)=e_1\SL_3(R)$. As a consequence, $\Um_4(R)=e_1\Sp_4(R)$.
 	
 \end{theorem}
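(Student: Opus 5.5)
We plan to reduce $\Um_3(R)=e_1\SL_3(R)$ to the vanishing of the orbit set $\Um_3(R)/\E_3(R)$ modulo the action of $\SL_3(R)$, using the Vaserstein symbol and the elementary symplectic Witt group. Since $\dim R=4$ and $R$ is smooth affine over an algebraically closed field with $4!\in K^*$, Fasel--Rao--Swan \cite{fasel2012stably} give transitivity of $\SL_4(R)$ on $\Um_4(R)$; more relevantly, by \cite{fasel2012stably} (and the Rao--van der Kallen results behind it) the orbit space $\Um_{d}(R)/\E_d(R)$ for $d=4$ is well controlled. For rows of length $3$ we would instead use the Vaserstein symbol $V:\Um_3(R)/\E_3(R)\to \W_E(R)$, which is a bijection when $R$ has dimension $\le 3$ but in dimension $4$ we only need it as a well-defined map into $\W_E(R)$, together with the stabilization $\Um_3(R)/\E_3(R)\to\Um_4(R)/\E_4(R)$.

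The key steps, in order, are as follows. First, given $v\in\Um_3(R)$, consider $v'=(v,0)\in\Um_4(R)$; by \cite{fasel2012stably}, $v'\in e_1\SL_4(R)$, so $v$ is the first row of a stably free module $P$ with $P\oplus R\cong R^3$, i.e.\ $P$ is stably free of rank $2$. Second, we show $P$ is free, equivalently $v$ is completable. For this we use that in the appropriate range the obstruction for $v$ lies in $\W_E(R)$ via the Vaserstein symbol, and that the natural map sending $v$ to $v$ raised to a power (Suslin's $\chi$-type operations, e.g.\ $(a,b,c)\mapsto(a,b,c^{n})$) acts on $V(v)$ by multiplication by $n$. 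Third, using divisibility of $\W_E(R)$ we write $V(v)=n\cdot w$ for some class $w$; choosing $n=2$ (or $n=3!$) and using Suslin's theorem that rows of the form $(a,b,c^{2})$ (more generally $(a_0,a_1,a_2^{2})$ with factorial exponent) are completable, we deduce that $v$ is $\E_3$-equivalent, up to the symbol, to a completable row, hence $v\in e_1\SL_3(R)$. The hypothesis $4!\in K^*$ and algebraic closure of $K$ enter through Suslin's $n!$ theorem and through the identification of $\W_E(R)$ with the relevant obstruction group (via \cite{fasel2012stably}-type vanishing of higher cohomological terms over algebraically closed fields).

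For the consequence, given $u\in\Um_4(R)$, by \cite{fasel2012stably} $u\in e_1\SL_4(R)$, so we can reduce $u$ by elementary (hence elementary symplectic, after using that $\E_4(R)=\ESp_4(R)\cdot$ stuff for rows, i.e.\ $e_1\E_4(R)=e_1\ESp_4(R)$ by Chattopadhyay--Rao) transformations to a row of the form $(a,b,c,d)$ with $(a,b,c)$ unimodular; completing $(a,b,c)$ to $\sigma\in\SL_3(R)$ by the first part and embedding $\sigma\perp 1$ allows us to transform $u$ to $e_1$ while staying inside $\Sp_4(R)$, using the standard fact that $\SL_2\times\SL_2\subset\Sp_4$ together with elementary symplectic moves kill the last coordinate. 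The main obstacle is the second/third step: making rigorous that the Vaserstein symbol detects the full obstruction in dimension $4$ (it is only a bijection in dimension $\le 3$), so we would first try to pass through the stable orbit $\Um_4/\E_4$ and the injectivity results of \cite{fasel2012stably} to show that the only obstruction is a class in $\W_E(R)$ on which the power operation is multiplication by $n$.
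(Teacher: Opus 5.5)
\textbf{Gap 1: the Vaserstein symbol must be a bijection.} Your first part fails where you say you need $V:\Um_3(R)/\E_3(R)\to\W_E(R)$ only as a well-defined map. Bijectivity is needed twice:
\begin{enumerate}
\item Surjectivity: divisibility gives a class $w$ with $[V(v)]=w^2$, and you must realise $w$ as $[V(u)]$ for an actual row $u\in\Um_3(R)$.
\item Injectivity: you must pass from $[V(v)]=[V(u_0^2,u_1,u_2)]$ to $v\sim_{\E}(u_0^2,u_1,u_2)$. Your phrase ``$\E_3$-equivalent up to the symbol'' is exactly what has to be proved.
\end{enumerate}
The paper does not get around this. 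It quotes \cite[Corollary~7.9]{fasel2012stably} for the bijectivity of $V$ for this very $R$, which transfers divisibility to the orbit set. It then uses Lemma~\ref{lemma:lemma2.1'} to get $[u]^2=[(u_0^2,u_1,u_2)]$ and Swan--Towber to complete $(u_0^2,u_1,u_2)$. Note that Lemma~\ref{lemma:lemma2.1'} requires $(u_0,u_1,u_2)\sim_{\E}(-u_0,u_1,u_2)$, a hypothesis you do not mention.

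Your fallback of detecting the obstruction through $\Um_4(R)/\E_4(R)$ cannot work. For every $v\in\Um_3(R)$ one has $(v,0)\sim_{\E}(v,1)\sim_{\E}e_4\sim_{\E}e_1$, so stabilization is trivial and sees nothing. For the same reason your first step carries no information.

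\textbf{Gap 2: the symplectic consequence.} This argument also breaks down, in two places.
\begin{enumerate}
\item You cannot in general move $u\in\Um_4(R)$ by elementary operations to $(a,b,c,d)$ with $(a,b,c)$ unimodular. That would give $u\sim_{\E}(a,b,c,0)\sim_{\E}e_1$, and then Lemma~\ref{lemma:lemma2.3'} alone would finish without any use of $\Um_3(R)$. $\SL_4$-completability does not give this.
\item $\sigma\perp 1$ with $\sigma\in\SL_3(R)$ is not in $\Sp_4(R)$. Also $\Sp_2\perp\Sp_2$ acts on the coordinate pairs $\{1,2\}$ and $\{3,4\}$ separately, so it does not help.
\end{enumerate}

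The paper's route is different. It writes $u=e_1\sigma$ with $\sigma\in\SL_4(R)$ and writes $\sigma^t\psi_2\sigma=V(v',w')$ with $v'\in\Um_3(R)$. Completability of $v'$ then identifies this with $(1\perp\rho)^t\psi_2(1\perp\rho)$ in $\W_E(R)$. Finally it corrects $\sigma$ using Lemma~\ref{lemma:lemma2.4} and Lemma~\ref{lemma:lemma2.3'}.

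A direct repair of your step is module-theoretic:
\begin{enumerate}
\item Pick $w$ with $u\psi_2w^t=1$, and let $P$ be the $\psi_2$-orthogonal complement of $Ru+Rw$. Then $P\oplus R^2\cong R^4$, and $P$ carries a nondegenerate alternating form.
\item Since $\Um_4(R)=e_1\SL_4(R)$, we get $P\oplus R\cong R^3$.
\item Since $\Um_3(R)=e_1\SL_3(R)$, $P$ is free.
\item A symplectic basis of $P$, together with $u$ and $w$, gives $\alpha\in\Sp_4(R)$ with $e_1\alpha=u$.
\end{enumerate}
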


In \cite{rao1988bass} Rao studied the problem of completion of unimodular rows over $R[X]$, where $R$ is a commutative noetherian local ring. Rao showed that if $R$ is a local ring of dimension $d$, $d\geq 2$, $\frac{1}{d!}\in R$, then any unimodular row over $R[X]$ of length $d+1$ can be mapped to a factorial row by elementary transformations. In particular, he proved the following: 
\begin{theorem}\label{thm:rao1}
	Let $R$ be a local ring of dimension $d\geq 2$. If $\frac{1}{d!}\in R$, then every unimodular row of length $d+1$ over $R[X]$ can be elementarily transformed to a unimodular row of the form $(w_0^{d!},w_1,...,w_d)$. In particular, $\Um_{d+1}(R[X])=e_1\SL_{d+1}(R[X])$.
\end{theorem}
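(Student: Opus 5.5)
This is Rao's theorem, quoted from \cite{rao1988bass}. I would prove it by induction on the number of variables used in a Quillen--Suslin local--global argument, combined with Suslin's factorial row theorem. There are two steps:
\begin{enumerate}[(i)]
\item Transform $v=(v_0,\dots,v_d)\in\Um_{d+1}(R[X])$ elementarily into a row whose first entry is a $d!$-th power.
\item Apply Suslin's theorem that $(a_0,a_1^{1},a_2^{2},\dots,a_d^{d})$ is completable. This gives completability of $(w_0^{d!},w_1,\dots,w_d)$, since a row $(a^{d!},b_1,\dots,b_d)$ is elementarily equivalent to a Suslin row up to the standard Mennicke--Vaserstein manipulations.
\end{enumerate}

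For (i) I first normalise $v$. Since $R$ is local with maximal ideal $\mathfrak m$, I claim that after elementary operations some entry is monic in $X$. The route is to reduce modulo $\mathfrak m$, use Suslin's monic polynomial lemma over the field $R/\mathfrak m$, and then lift. Alternatively, one can pass to $R(X)$, or perform a change of variables $X\mapsto X+cX^N$ combined with the usual Suslin trick of adding multiples.

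Once $v_0$ is monic, I work over $A=R[X]/(v_0)$. This ring is finite over $R$, so $\dim A\le d$ and $A$ is semilocal over a $d$-dimensional base. Then $\bar v'=(\bar v_1,\dots,\bar v_d)$ has length $d$ over $A$. Using the Swan--Towber/Suslin trick I want to replace $\bar v_1$ by a unit modulo an ideal of height large enough that the row becomes \emph{stably elementary of factorial form}.

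The key computational device is Vaserstein's rule. It says that $(a,b,c,\dots)\sim_E(a^{r},b,c,\dots)$ whenever the relevant Mennicke-symbol identities hold. In dimension $d$ over a local base, these identities hold because the symbols $MS_{d+1}$ are $d!$-divisible in Suslin's sense. Repeatedly squaring and cubing the first coordinate, with each step justified via the Vaserstein symbol, I expect to reach $w_0^{d!}$ and so obtain (i).

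The main obstacle is (i): showing that the first coordinate can be raised to an arbitrary $k$-th power for $k\le d$ using only elementary moves over the non-local ring $R[X]$. The difficulty is that $R[X]$ has dimension $d+1$, so stable range arguments fail by one. The plan to overcome it is to use local--global principles. For each $\mathfrak p$ in the base, the row over $R_{\mathfrak p}[X]$ is handled by the monic reduction above. Then the Quillen patching for elementary actions (Suslin's $\E_n$-normality for $n\ge3$) glues the local elementary matrices. The hypothesis $1/d!\in R$ is used exactly in Suslin's $n!$ theorem, where inverting $d!$ lets the intermediate exponents be absorbed.
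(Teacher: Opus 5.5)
\textbf{There is a genuine gap, in step (i).} The paper does not prove this statement; it quotes it from \cite{rao1988bass}. Your step (i), which is where all the content of Rao's theorem lies, is not established.

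\textbf{The monic normalisation.} Your routes do not produce a monic entry, and the claim proves far too much.
\begin{enumerate}
\item Reducing modulo $\mathfrak m$ and lifting an elementary reduction over the Euclidean ring $(R/\mathfrak m)[X]$ only gives $v\epsilon\equiv e_1 \pmod{\mathfrak m R[X]}$. The first entry is then $1+m(X)$ with $m(X)\in \mathfrak m R[X]$. Its constant term is a unit, but its leading coefficient lies in $\mathfrak m$, so it is not monic.
\item For $N\ge 2$ and $c$ not nilpotent, $X\mapsto X+cX^N$ is an endomorphism of $R[X]$ but not an automorphism. It therefore relates $v$ to nothing elementarily; the Nagata--Suslin change of variables needs at least two variables.
\item Over the local ring $R(X)$ every unimodular row is trivially in $e_1\E_n(R(X))$, and nothing descends from there to $R[X]$.
\end{enumerate}
More to the point, suppose some $v_i$ were monic. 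Then $A=R[X]/(v_i)$ is finite over the local ring $R$, hence semilocal of stable rank $1$. The argument you yourself start, together with lifting of elementary matrices, immediately gives $v\in e_1\E_{d+1}(R[X])$; this is Suslin's monic-entry theorem over a local base. That conclusion is stronger than the theorem, and it would make $1/d!\in R$, factorial rows and Theorem~\ref{thm:theorem3.1} all superfluous. So assuming the normalisation is assuming the crux.

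\textbf{The local--global step.} This does not rescue the argument. $R$ is already local, so Quillen--Suslin patching over $R$ is vacuous. In any case that principle yields $v\sim_E v(0)$, not a factorial form.

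\textbf{The Vaserstein-rule step.} As stated it is wrong. Replacing the first coordinate by a power is not an elementary move. Where a group structure on elementary orbits exists (van der Kallen's, or via the Vaserstein symbol as in Lemma~\ref{lemma:lemma2.1'}), one has $[(a^r,b,\dots)]=[(a,b,\dots)]^r$, which changes the class.

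To reach $v\sim_E (w_0^{d!},w_1,\dots,w_d)$ you need a divisibility statement: $[v]=[w]^{d!}$ for some unimodular $w$. This is exactly the kind of input the paper must impose as a hypothesis (divisibility of $\W_E(R)$) in Theorem~\ref{thm:theorem3.5}. Your remark that the symbols are ``$d!$-divisible in Suslin's sense'' restates what has to be proved rather than proving it.

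\textbf{The role of $1/d!$.} You also misplace where this hypothesis is used. Suslin's $n!$ theorem (Theorem~\ref{thm:theorem3.1}) holds over any commutative ring. With exponents $(d!,1,\dots,1)$ it completes $(w_0^{d!},w_1,\dots,w_d)$ directly, so step (ii) is fine but needs no Mennicke manipulations and no hypothesis on $d!$.

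\textbf{What is missing.} The invertibility of $d!$ must be used in step (i), to produce the factorial form elementarily. Your proposal contains no argument in which it is actually used.
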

In this article, we prove symplectic completion over $R[X]$, where $R$ is a local ring of dimension $3$. We prove the following result:
\begin{theorem}\label{thm:theorem1.5}
	Let $R$ be a commutative noetherian local ring of dimension $3$ with $\frac{1}{3!}\in R$. Then $\Um_4(R[X])=e_1\Sp_4(R[X])$.
\end{theorem}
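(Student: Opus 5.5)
We would start from Theorem~\ref{thm:rao1} with $d=3$. Since $\frac{1}{3!}\in R$, any $v\in\Um_4(R[X])$ can be moved by $\E_4(R[X])$ to a row $(w_0^{6},w_1,w_2,w_3)$. The first point to check is that elementary transformations preserve the orbit $e_1\Sp_4(R[X])$. For $n=4$ we have $\E_4(R[X])\subseteq\SL_4(R[X])$, and the standard fact (Suslin--Vaserstein, or \cite[Lemma 5.5]{suslin1976serre} type arguments) gives $\Um_{2n}(A)\,\E_{2n}(A)=\Um_{2n}(A)\,\ESp_{2n}(A)$ at the level of orbits for $2n\ge 4$. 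That is, $v\E_{2n}(A)=v\ESp_{2n}(A)$ for any unimodular $v$. So it suffices to show that $(w_0^{6},w_1,w_2,w_3)$ lies in $e_1\Sp_4(R[X])$.

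Next, reduce the factorial exponent. Write $w_0^{6}=(w_0^{2})^{3}$. Suslin's factorial-row theorem gives an explicit completion of $(a^{r!},b_1,\dots,b_r)$, and in length $4$ with exponent $3!$ one expects a symplectic version. Here we would use the Krusemeyer/Vaserstein style identity. First, by elementary (hence, as above, elementary symplectic) moves together with the Vaserstein-symbol manipulations in length $3$, we turn $(w_0^{6},w_1,w_2,w_3)$ into a row of the form $(a^{2},b,c,d)$ or $(a,b,c^{2},d)$ up to $\ESp_4$. The relevant known result is that rows $(a^2,b,c,d)$ of length $4$ are symplectically completable. This is Krusemeyer's skewly completable argument: one uses the alternating matrix built from $(a,b,c,d)$ and a section $(a',b',c',d')$, together with the Pfaffian-$1$ condition.

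The concrete step is the following. For $v=(a,b,c,d)$ with section $w$ satisfying $v\cdot w^T=1$, the row $(a^2,b,c,d)$ is the first row of a $4\times 4$ matrix $\alpha$ with $\alpha^T\psi_2\alpha=\psi_2$. We would construct $\alpha$ explicitly as in Krusemeyer's proof, using the $2\times 2$ determinant trick $\begin{pmatrix}a&b\\-d'&a'\end{pmatrix}$-type blocks. The remaining task is to lower $w_0^{6}$ to a square. Since $6=2\cdot 3$, we have $(w_0^{6},w_1,w_2,w_3)=((w_0^{3})^{2},w_1,w_2,w_3)$, which is already of the form $(a^2,b,c,d)$ with $a=w_0^{3}$. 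So no further reduction is needed, and the symplectic completion of $(a^2,b,c,d)$ finishes the proof.

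The main obstacle is the first step: justifying that the $\E_4(R[X])$-move from Theorem~\ref{thm:rao1} can be replaced by an $\ESp_4(R[X])$-move, or that it preserves membership in $e_1\Sp_4(R[X])$. If $v\E_4=v\ESp_4$ is not available in the needed form, we would argue instead as follows. If $v\varepsilon=u$ with $u=e_1\beta$, $\beta\in\Sp_4$, then $v=e_1\beta\varepsilon^{-1}$. Using normality of $\ESp_4$ in $\Sp_4$ together with $\E_4\subseteq\ESp_4\cdot$(stabilizer of $e_1$), which holds by Chattopadhyay--Rao's equality of elementary orbits, we can rewrite $v$ as $e_1\gamma$ with $\gamma\in\Sp_4(R[X])$.
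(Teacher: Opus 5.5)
Your first step is sound. Rao's theorem gives $v\sim_{\E}(w_0^{6},w_1,w_2,w_3)$, and $u\E_4(R[X])=u\ESp_4(R[X])$ holds for every unimodular $u$ (Lemma~\ref{lemma:lemma2.3'}). So it does suffice to show $(w_0^6,w_1,w_2,w_3)\in e_1\Sp_4(R[X])$.

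The gap is the ``known result'' you then rely on: that every length-$4$ row $(a^2,b,c,d)$ is the first row of a symplectic matrix given by an explicit Krusemeyer-type formula. This is false. In fact no formula depending only on the shape of the row can work. Take $A=\mathbb{C}[x_0,\dots,x_3,y_0,\dots,y_3]/(\sum x_iy_i-1)$. Its complex points are homotopy equivalent to $\mathbb{C}^4\setminus\{0\}\simeq S^7$, and the row $(x_0^{m},x_1,x_2,x_3)$ induces a self-map of $S^7$ of degree $m$.
\begin{itemize}
\item A completion in $\SL_4(A)$ forces $m$ into the image of $\pi_7(\SL_4(\mathbb{C}))\to\pi_7(S^7)$. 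That image is $6\mathbb{Z}$, because the stabilizer of $e_1$ is homotopy equivalent to $\mathrm{SU}(3)$, $\pi_6(\mathrm{SU}(3))=\mathbb{Z}/6$ and $\pi_6(\mathrm{SU}(4))=0$. So $(a^2,b,c,d)$ is not even $\SL_4$-completable in general; Suslin's condition $3!\mid\prod n_i$ is sharp.
\item A completion in $\Sp_4(A)$ forces $m\in 12\mathbb{Z}$. Here the stabilizer of $e_1$ in $\Sp_4(\mathbb{C})$ is homotopy equivalent to $\SL_2(\mathbb{C})\simeq S^3$, $\pi_6(S^3)=\mathbb{Z}/12$, and $\pi_6(\mathrm{Sp}(2))=0$.
\end{itemize}
Hence even $(x_0^6,x_1,x_2,x_3)$ lies in $e_1\SL_4(A)$ but not in $e_1\Sp_4(A)$, so rewriting $w_0^6=(w_0^3)^2$ buys nothing. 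Krusemeyer's skew-completion argument concerns odd-length rows: it recovers the Swan--Towber completion of $(a^2,b,c)$ from the Vaserstein matrix. It does not apply to length $4$.

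What is missing is arithmetic input from $R[X]$ that kills the obstruction to upgrading an $\SL_4$-completion to an $\Sp_4$-completion. The paper supplies it as follows.
\begin{itemize}
\item Write $v=e_1\sigma$ with $\sigma\in\SL_4(R[X])$.
\item The Pfaffian-$1$ alternating matrix $\sigma^t\psi_2\sigma$ equals a Vaserstein matrix $V(v',w')$ with $v'\in\Um_3(R[X])$.
\item Rao's theorem $\Um_3(R[X])=e_1\SL_3(R[X])$ gives $v'=e_1\rho$, so $[V(v',w')]=[(1\perp\rho)^t\psi_2(1\perp\rho)]$ in $\W_E(R[X])$.
\item The Chattopadhyay--Rao splitting (Lemma~\ref{lemma:lemma2.4}) then yields $\sigma=\delta(1\perp\rho)\delta_1(1\perp\epsilon_1^{-1})$ with $\delta\in\Sp_4(R[X])$ and $\delta_1\in\ESp_4(R[X])$.
\item Finally, commutativity of $\Cm_4(R[X])/\E_4(R[X])$ together with Lemma~\ref{lemma:lemma2.3'} gives $v\in e_1\Sp_4(R[X])$.
\end{itemize}
Your argument never uses the completability of length-$3$ rows over $R[X]$. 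The computation above shows that this input, or some substitute for it, cannot be avoided.
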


We also prove the relative version of Theorem~\ref{thm:theorem1.5}.

\begin{theorem}\label{thm:theorem1.6}
	Let $R$ be a commutative noetherian local ring of dimension $3$ with $\frac{1}{3!}\in R$. Let $I\subset R$ be an ideal. Then $\Um_4(R[X],I[X])=e_1\Sp_4(R[X],I[X])$.
	\end{theorem}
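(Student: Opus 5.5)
The plan is to reduce the relative statement to the absolute one, Theorem~\ref{thm:theorem1.5}, by the standard excision/doubling technique. Put $A=R[X]$ and $J=I[X]$, and let $D=A\times_J A$ be the double of $A$ along $J$. Then $D\cong R[X]\oplus I[X]$, which is the polynomial ring $S[X]$ over the double $S=R\oplus I$ of $R$ along $I$. Since $R$ is local, $S$ is again a commutative noetherian local ring: its unique maximal ideal is $\mathfrak m\oplus I$. Moreover $S$ is finite over $R$, so $\dim S=\dim R=3$, and $\frac{1}{3!}\in S$.

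\textbf{Step 1: lift and apply the absolute result.} Take $v\in\Um_4(A,J)$, so $v\equiv e_1 \pmod J$. Then $\tilde v=(e_1,v)$ is a unimodular row over $D=S[X]$, because it is unimodular modulo the kernel of each of the two projections. By Theorem~\ref{thm:theorem1.5} applied to $S$, there is $\tilde\sigma\in\Sp_4(S[X])$ with $e_1\tilde\sigma=\tilde v$.

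\textbf{Step 2: correct the first component.} Write $p_1,p_2\colon D\to A$ for the two projections. Then $\sigma_1=p_1(\tilde\sigma)\in\Sp_4(A)$ satisfies $e_1\sigma_1=e_1$. We want to replace $\tilde\sigma$ by $\tilde\sigma\cdot\widetilde{\sigma_1^{-1}}$, where $\widetilde{\sigma_1^{-1}}$ is the diagonal lift of $\sigma_1^{-1}$ to $\Sp_4(D)$ (the same matrix in both coordinates). This changes neither $e_1\tilde\sigma$ nor the second coordinate's first row, and it makes the first projection the identity. The second projection $\tau=p_2(\tilde\sigma)\sigma_1^{-1}$ then has trivial image modulo $J$, since $p_1\equiv p_2 \pmod J$. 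So $\tau\in\Sp_4(A,J)$ and $e_1\tau=v e_1^{-1}$-adjusted; more precisely $e_1\tau=e_1 p_2(\tilde\sigma)\sigma_1^{-1}$. Here we need $e_1\sigma_1^{-1}=e_1$, which holds because $e_1\sigma_1=e_1$. Hence $e_1\tau=v\sigma_1^{-1}$ is not yet $v$; to fix the order, I would right-multiply in the correct order instead: use $\widetilde{\sigma_1}^{-1}\tilde\sigma$. This has first projection $I$ and second projection $\sigma_1^{-1}p_2(\tilde\sigma)$, and $e_1\sigma_1^{-1}p_2(\tilde\sigma)=e_1p_2(\tilde\sigma)=v$. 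Thus $\tau=\sigma_1^{-1}p_2(\tilde\sigma)\in\Sp_4(A,J)$ and $e_1\tau=v$.

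\textbf{Remaining points.} Here $\Sp_4(A,J)$ means the kernel of $\Sp_4(A)\to\Sp_4(A/J)$, or equivalently symplectic matrices congruent to the identity modulo $J$. If the paper instead defines $\Sp_4(A,J)$ as the elementary-generated relative group, I would additionally need the standard identification that $(1,\tau)\in\Sp_4(D)$ corresponds to elements of that group; I expect the congruence-kernel definition. The other inclusion, that $e_1\Sp_4(A,J)\subseteq\Um_4(A,J)$, is trivial.

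\textbf{Main obstacle.} The delicate point is confirming that the double $S=R\oplus I$ meets all the hypotheses of Theorem~\ref{thm:theorem1.5}: it must be local, noetherian, of dimension exactly $3$, and contain $\frac{1}{3!}$. It must also be checked that $D\cong S[X]$ as rings, which holds since $I[X]$ is $I\otimes_R R[X]$. If the dimension of $S$ dropped below $3$, we would need the lower-dimensional case. That case follows since $\Um_4$ over $B[X]$ with $\dim B\le 2$ is handled by $\E_4$-transitivity together with $\E_4=\ESp_4$ on $e_1$-orbits (the results of Suslin and Kopeiko).
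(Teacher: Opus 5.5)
Your argument is correct and is essentially the paper's proof. Your double $R[X]\times_{I[X]}R[X]$ is the paper's excision ring $(R\oplus I)[X]$ via $(a,b)\mapsto(a,b-a)$, and your row $(e_1,v)$ is the paper's lift $\tilde v(X)$. Your projections $p_1$ and $p_2$ play the roles of reduction modulo $(0\oplus I)[X]$ and of the map $g$, and your corrected matrix $\widetilde{\sigma_1}^{-1}\tilde\sigma$ is exactly the inverse of the paper's $\delta=\sigma\bar\sigma^{-1}$.

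One caveat, which the paper shares: $S=R\oplus I$ is local only when $I\neq R$. For $I=R$, however, the statement is just Theorem~\ref{thm:theorem1.5}.
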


By using Swan-Weibel's homotopy trick (\cite[Appendix~3]{lam2006serre}), we establish the graded analogue of Theorem~\ref{thm:theorem1.5}. In particular, we prove the following result:

\begin{theorem}\label{thm:theorem1.7}
	Let $A=\oplus_{i\geq 0}A_i$ be a graded ring of dimension $3$ with  $\frac{1}{3!}\in A$. Assume $A_0$ is a local ring. Then $\Um_{4}(A)=e_1\Sp_{4}(A)$.
	
\end{theorem}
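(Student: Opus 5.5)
We will reduce Theorem~\ref{thm:theorem1.7} to Theorem~\ref{thm:theorem1.5} (in its relative form, Theorem~\ref{thm:theorem1.6}) using the Swan--Weibel homotopy trick, as indicated before the statement. Let $v=(v_1,\dots,v_4)\in \Um_4(A)$. Consider the ring homomorphism
\[
\theta\colon A\longrightarrow A[T],\qquad a=\textstyle\sum_i a_i\longmapsto \sum_i a_iT^i,
\]
where $a_i\in A_i$. Then $\theta$ is a graded ring map with $\theta(a)|_{T=1}=a$ and $\theta(a)|_{T=0}=a_0$. Put $w(T)=\theta(v)\in \Um_4(A[T])$, so that $w(1)=v$ and $w(0)=v_0:=(v_{1,0},\dots,v_{4,0})\in\Um_4(A_0)$. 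Since $A_0$ is local, $v_0$ has a unit entry, and hence $v_0\in e_1\E_4(A_0)\subseteq e_1\Sp_4(A_0)$, because $\E_4=\ESp_4$ acts transitively on rows with a unit entry. Replacing $v$ by $v\,\theta$-images of a suitable element of $\Sp_4(A_0)\subseteq\Sp_4(A)$, we may therefore assume that $w(0)=e_1$.

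The key point is now to show that $w(T)\in e_1\Sp_4(A[T])$; specializing at $T=1$ then gives $v\in e_1\Sp_4(A)$. The ring $A[T]$ is not of the form $R[X]$ with $R$ local, so we localize. The standard Quillen patching argument for the symplectic group, as in the local-global principle for $\Sp$ of Kopeiko and Suslin, states: if $w(T)\in\Um_{2n}(B[T])$ with $w(0)=e_1$, and $w(T)_{\mathfrak m}\in e_1\Sp_{2n}(B_{\mathfrak m}[T])$ for all maximal ideals $\mathfrak m$ of $B$, then $w(T)\in e_1\Sp_{2n}(B[T])$. More precisely, the version with $\ESp$ uses $w(T)=e_1\sigma(T)$ with $\sigma(0)=\mathrm{Id}$. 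We apply this with $B=A$. For each maximal ideal $\mathfrak m$, $A_{\mathfrak m}$ is local of dimension at most $3$ and contains $\frac{1}{3!}$, so Theorem~\ref{thm:theorem1.5} gives $w(T)_{\mathfrak m}\in e_1\Sp_4(A_{\mathfrak m}[T])$. If the dimension is less than $3$, the result follows from Suslin's Lemma~5.5, since $4\ge d+2$. Here we must ensure that the patching needs only $\Sp$-completability rather than elementary symplectic completability. To handle this, adjust the local completion $\sigma_{\mathfrak m}(T)$ by $\sigma_{\mathfrak m}(0)^{-1}$, which lies in $\Sp_4(A_{\mathfrak m})$ and fixes $e_1$ because $w(0)=e_1$. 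This gives $\sigma_{\mathfrak m}(0)=\mathrm{Id}$, and the patching then proceeds via the ``$\sigma(T)\sigma(bT)^{-1}$'' trick, which lands in $\Sp_4(A[T])$ after clearing denominators using the standard lemma that $\sigma(X+bY)\sigma(X)^{-1}\in \ESp$ when $b$ is a high power of a suitable element.

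The main obstacle is exactly this patching step. The local-global principle is clean for $\ESp$-orbits, but our local input is only $\Sp$-completability. If the direct adjustment above fails, the fallback is to use the graded structure more cleverly, following the Swan--Weibel approach of \cite[Appendix~3]{lam2006serre}. Define $\Sigma(T)$ on $A[T]$ via the homotopy $a\mapsto\theta(a)$. The monoid structure then lets the local statement for $A_{\mathfrak m}[T]$ be pulled back through the graded map, reducing it to showing that the stably free module $P$ determined by $w(T)$, together with its induced alternating form, is extended from $A_0$. Extendedness of such symplectic modules can be checked locally by Quillen's patching for symplectic forms. Locally, the module is free with the standard form by Theorem~\ref{thm:theorem1.5}, and it is trivial over the local ring $A_0$. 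This yields $w(T)\in e_1\Sp_4(A[T])$, and setting $T=1$ completes the proof.
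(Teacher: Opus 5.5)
Your proposal is correct in substance, but it takes a different route from the paper.

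\textbf{The paper's route.} The paper uses the Swan--Weibel homotopy only to obtain $\SL$-completability over $A$ itself, in lengths $4$ and $3$ (Propositions~\ref{prop:proposition5.1} and~\ref{prop:proposition5.2}, via Rao's extension results, Lemmas~\ref{lemma:lemma2.5} and~\ref{lemma:lemma2.6}). It then repeats the argument of Theorem~\ref{thm:theorem3.4} directly over $A$:
\begin{enumerate}
\item write $\sigma^t\psi_2\sigma=V(v',w')$ and complete $v'$ by some $\rho\in\SL_3(A)$;
\item compare Vaserstein symbols in $\W_E(A)$;
\item split the resulting elementary matrix with Lemma~\ref{lemma:lemma2.4};
\item finish with the multiplicative group law on completable orbits and $e_1\E_4=e_1\ESp_4$.
\end{enumerate}

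\textbf{Your route.} You normalize $w(0)=e_1$ using that $A_0$ is local. You then use Quillen patching for $\Sp_4$-orbits to reduce $w(T)\in e_1\Sp_4(A[T])$ to the rings $A_{\mathfrak m}[T]$, apply Theorem~\ref{thm:theorem1.5} there as a black box, and specialize at $T=1$.

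\textbf{What each buys.} Your route does the symplectic work only once and avoids Proposition~\ref{prop:proposition5.2}. It also needs no group law on $\Cm_4(A)/\E_4(A)$ for the graded ring itself; the paper invokes Lemma~\ref{lemma:lemma2.1} for this, and that lemma is formulated for the relative orbits modulo $A_+$. As a bonus, your argument shows that every $w(T)\in\Um_4(A[T])$ lies in $w(0)\Sp_4(A[T])$. The paper's route avoids any patching, at the cost of re-establishing all the symplectic inputs over $A$.

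\textbf{Points to tighten.} There are two.

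First, state the patching lemma for $\Sp_4$, not $\ESp_4$.
\begin{itemize}
\item With the right action, the relevant element is $\sigma(X)^{-1}\sigma(X+bY)$, not $\sigma(X+bY)\sigma(X)^{-1}$.
\item For $\sigma\in\Sp_4(A_s[T])$ with $\sigma(0)=I$ and $b$ a high power of $s$, this element comes from $\Sp_4(A[X,Y])$. A further substitution $Y\mapsto s^kY$ kills the $s$-torsion. It does not in general lie in $\ESp_4$.
\item The set of $b$ with $w(X+bY)\in w(X)\Sp_4(A[X,Y])$ is an ideal not contained in any maximal ideal. That is all you need, so the fallback paragraph can be dropped.
\item Alternatively, $w\in e_1\Sp_4$ if and only if the rank-$2$ symplectic complement of a hyperbolic pair through $w$ is free. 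So ordinary Quillen patching for projective modules already suffices.
\end{itemize}

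Second, your treatment of maximal ideals with $d=\dim A_{\mathfrak m}<3$ is incomplete when $d=2$. There $A_{\mathfrak m}[T]$ has dimension $3$, so Lemma~\ref{lemma:lemma2.2} plus Bass's stable range does not give $\Um_4(A_{\mathfrak m}[T])=e_1\E_4(A_{\mathfrak m}[T])$. Either of the following fixes it:
\begin{itemize}
\item cite Suslin's theorem that $\E_n(R[T])$ acts transitively on $\Um_n(R[T])$ for $n\geq\max(3,\dim R+2)$; or
\item apply Theorem~\ref{thm:theorem1.5} to the $3$-dimensional local ring $A_{\mathfrak m}[Z_1,\dots,Z_{3-d}]_{(\mathfrak m,Z)}$ and specialize $Z_i\mapsto 0$.
\end{itemize}
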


\section{Preliminaries} 

\begin{definition}\label{def:definition2.1}
	A row $v=(v_0,v_1,...,v_n)\in R^{n+1}$ is said to be unimodular if there is a row $u=(u_0,u_1,...,u_n)\in R^{n+1}$ with $<v,u>=\sum\limits_{i=0}^{n}v_iu_i=1$ and $\Um_{n+1}(R)$ will denote the set of unimodular rows over $R$ of length $n+1$.
\end{definition}

Let $\GL_{n+1}(R)$ denote the group of all invertible $(n+1)\times (n+1)$ matrices over $R$ and $\SL_{n+1}(R)$ denote the subgroup of $\GL_{n+1}(R)$ consisting of all matrices of determinant 1. The right-action of $\GL_{n+1}(R)$ (and hence of any subgroup) on $\Um_{n+1}(R)$ is by matrix multiplication. Let $G$ be a subgroup of $\GL_{n+1}(R)$, we write $u\sim_G v$ if there exists $\mu \in G$ such that $v=u\mu$. We abbreviate the notations $u\sim_{\GL_{n+1}(R)} v$ to $u\sim v$ and $u\sim_{\SL_{n+1}(R)} v$ to $u\sim_{SL} v$. The group of elementary matrices is a subgroup of $\GL_{n+1}(R)$, denoted by $\E_{n+1}(R)$. The group $\E_{n+1}(R)$ is generated by the matrices of the form $e_{ij}(\lambda)=I_{n+1}+\lambda \E_{ij}$, where $\lambda\in R$, $i\neq j$, $1\leq i,j\leq n+1$, $\E_{ij}\in M_{n+1}(R)$ whose $ij^{th}$ entry is $1$ and all other entries are zero. We abbreviate the notation $u\sim_{\E_{n+1}(R)} v$ to $u\sim_E v$. The matrices $e_{ij}(\lambda)$ will be referred to as $elementary~matrices$.    \\

We shall regard $\GL_{n+1}(R)$ as a subgroup of $\GL_{n+m+1}(R)$ by the map $$\alpha \mapsto \begin{pmatrix}
	\alpha & 0 \\
	0 & I_m
\end{pmatrix}, \alpha\in \GL_{n+1}(R).$$
Let 
	\begin{align*}
  \GL(R)&=\bigcup\limits_{n=0}^{\infty}\GL_{n+1}(R)\\
  \SL(R)&=\bigcup\limits_{n=0}^{\infty}\SL_{n+1}(R)\\
  \E(R)&=\bigcup\limits_{n=0}^{\infty}\E_{n+1}(R).\\
\end{align*}

The group $\E_{n+1}(R)$ is called the elementary linear group and it acts on the rows of length $n+1$ by right multiplication. Moreover, this action takes unimodular rows to unimodular rows:  \\

$\frac{\Um_{n+1}(R)}{\E_{n+1}(R)}$ will denote set of orbits of this action; and we shall denote by $[v]$ the equivalence class of a row $v$ under this equivalence relation.
In \cite[Theorem $3.6$]{van1983group}, W. van der Kallen derives an abelian group structure on $\frac{\Um_{d+1}(R)}{\E_{d+1}(R)}$ when $R$ is of dimension $d\geq 2$.
Let $\Cm_{n+1}(R)$ denote the subset of $\Um_{n+1}(R)$ consisting of the (completable) unimodular rows which can be completed to a matrix of determinant $1$ i.e. $\Cm_{n+1}(R)=\{v\in \Um_{n+1}(R) : v=e_1\alpha\ ~\text{for some}~\alpha \in \SL_{n+1}(R)\}$, where $e_1=(1,0,\cdot\cdot\cdot,0)$ is the first standard basis row vector. Now let $n\geq 2$. In \cite[Corollary 2.34]{rao2017homotopy}, it is shown that the orbit set of completable unimodular rows over $R[X]$, when $R$ is a local ring, modulo the elementary action has an abelian group structure under matrix multiplication i.e. orbit set $\frac{\Cm_{n+1}(R[X])}{\E_{n+1}(R[X])}$ has abelian group structure for $n\geq 2$. To describe this group structure, recall that for a unimodular row $v\in\Cm_{n+1}(R[X])$ there exists $\alpha\in \SL_{n+1}(R[X])$ such that $v=e_1\alpha$. Given two classes $[v],~[w]\in \frac{\Cm_{n+1}(R[X])}{\E_{n+1}(R[X])}$ with representatives $v=e_1\alpha$ and $w=e_1\beta$, we define their product by 
\begin{ceqn}
	\begin{align*}
		[v]\cdot [w]= [e_1\alpha\beta].
	\end{align*}
\end{ceqn}
This operation is well-defined (see \cite[Theorem 2.33]{rao2017homotopy}). The identity is given by $[e_1]$, the inverse of $[e_1\alpha]$ is $[e_1\alpha^{-1}]$, and associativity follows from matrix multiplication. A nontrivial fact, is that this group law is commutative, follows from the fact, $e_1\alpha\beta\sim_E e_1\beta\alpha$, as commutator $[\alpha, \beta]\in \E_{n+1}(R[X])$ (see \cite[Corollary~ 2.20]{rao2017homotopy}).
\begin{lemma}\label{lemma:lemma2.1}\cite[Theorem 2.33]{rao2017homotopy}
	Let $R=\oplus_{d\geq 0}R_d$ be a graded ring with augmentation ideal $R_+=\oplus_{d\geq 1}R_d$. Then for $n\geq 3$, $\frac{\Cm_n(R,R_+)}{\E_n(R,R_+)}$ has an abelian group structure under matrix multiplication. In particular, for $n\geq 3$, the first row map 
	\begin{ceqn}
	\begin{align*}
		\SL_n(R,R_+)\rightarrow & \frac{\Cm_n(R,R_+)}{\E_n(R,R_+)}\\
		\sigma\mapsto & [e_1\sigma]
	\end{align*}
		\end{ceqn}
	is a group homomorphism.
\end{lemma}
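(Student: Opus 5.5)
We would prove Lemma~\ref{lemma:lemma2.1} in the order: well-definedness of the product, the group axioms, and then commutativity. Throughout we use two standard tools:
\begin{enumerate}[(a)]
\item Suslin's normality theorem in its relative form: for $n\geq 3$, the group $\E_n(R,R_+)$ is normal in $\GL_n(R)$.
\item The Swan--Weibel grading homotopy. The ring map $R\to R[T]$, $r\mapsto rT^i$ for $r\in R_i$, sends $\sigma\in \SL_n(R,R_+)$ to some $\sigma(T)\in \SL_n(R[T])$ with $\sigma(0)=I_n$ and $\sigma(1)=\sigma$.
\end{enumerate}
We also use the split surjection $R\to R_0$ with kernel $R_+$. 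It gives $\E_n(R)\cap \SL_n(R,R_+)=\E_n(R,R_+)$, since an element of $\E_n(R)$ splits as its image over $R_0$ times an element of $\E_n(R,R_+)$.

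\textbf{Well-definedness.} Let $v\in \Cm_n(R,R_+)$ and define $[v]\cdot[w]=[e_1\sigma\beta]$, where $v=e_1\sigma$ and $w=e_1\beta$ with $\sigma,\beta\in\SL_n(R,R_+)$. There are three checks.
\begin{enumerate}[(i)]
\item The product does not depend on the completion $\sigma$ of $v$. If $e_1\sigma=e_1\sigma'$, then $\rho=\sigma'\sigma^{-1}$ fixes $e_1$. Hence $e_1\sigma'\beta=e_1\rho\sigma\beta=e_1\sigma\beta$.
\item The product does not depend on the representative of $[v]$. Replacing $v$ by $v\varepsilon$ with $\varepsilon\in\E_n(R,R_+)$, we may take the completion $\sigma\varepsilon$. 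Then
$$e_1\sigma\varepsilon\beta=e_1\sigma\beta\,(\beta^{-1}\varepsilon\beta),$$
and $\beta^{-1}\varepsilon\beta\in\E_n(R,R_+)$ by normality.
\item The product does not depend on the representative of $[w]$. Replacing $w$ by $w\varepsilon$ replaces $\beta$ by $\beta\varepsilon$, which visibly preserves the class.
\end{enumerate}
Associativity is inherited from matrix multiplication. The identity is $[e_1]$, and the inverse of $[e_1\sigma]$ is $[e_1\sigma^{-1}]$. The first-row map $\sigma\mapsto[e_1\sigma]$ is then a homomorphism by construction.

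\textbf{Commutativity (main obstacle).} It suffices to show that the commutator $[\sigma,\beta]$ lies in $\E_n(R,R_+)$ for all $\sigma,\beta\in\SL_n(R,R_+)$, since then $e_1\sigma\beta\sim_E e_1\beta\sigma$. The plan is a Quillen--Suslin local-global argument.
\begin{enumerate}[(1)]
\item Set $\gamma(T)=[\sigma(T),\beta]\in\SL_n(R[T])$. Since $\sigma(0)=I_n$, we have $\gamma(0)=I_n$.
\item Fix a maximal ideal $\mathfrak m$ of $R$. Over the local ring $R_{\mathfrak m}$ we have $\SL_n(R_{\mathfrak m})=\E_n(R_{\mathfrak m})$, so $\beta\in\E_n(R_{\mathfrak m})$. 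By Suslin normality over $R_{\mathfrak m}[T]$ for $n\geq 3$, $\gamma(T)$ then lies in $\E_n(R_{\mathfrak m}[T])$.
\item By the local-global principle for $\E_n$ (valid for $n\geq3$), $\gamma(T)\in\E_n(R[T])$.
\item Specializing at $T=1$ gives $[\sigma,\beta]\in\E_n(R)$.
\item Since $\sigma$ and $\beta$ reduce to the identity over $R_0$, the commutator lies in $\SL_n(R,R_+)$. By the splitting remark above, it therefore lies in $\E_n(R,R_+)$.
\end{enumerate}

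The delicate step is (2): applying normality and the local-global principle correctly, with the required hypotheses on $n$ and on the ring. If that step proves troublesome, we would instead invoke the commutator statement \cite[Corollary~2.20]{rao2017homotopy} in its relative graded form directly.
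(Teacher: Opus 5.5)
\textbf{There is a gap in the well-definedness check (iii).}

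Your commutator argument in steps (1)--(5) is sound. The Swan--Weibel homotopy $\sigma(T)$ with $\sigma(0)=I_n$, Suslin normality over $R_{\mathfrak m}[T]$, Suslin's local--global principle, specialization at $T=1$, and the splitting $R\to R_0$ together give $[\sigma,\beta]\in\E_n(R,R_+)$. This is the fact the paper takes from \cite[Corollary~2.20]{rao2017homotopy}.

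The problem is in (iii). Your product $[e_1\sigma\beta]$ is defined through a \emph{choice} of completion $\beta$ of $w$, and you never show the class is independent of that choice. Handling the particular completion $\beta\varepsilon$ of $w\varepsilon$ does not cover this. Two completions of the same $w$ differ by $\beta'=\rho\beta$, where $\rho=\beta'\beta^{-1}\in\SL_n(R,R_+)$ and $e_1\rho=e_1$. So you must compare $e_1\sigma\rho\beta$ with $e_1\sigma\beta$. Unlike in check (i), the stabilizer element $\rho$ now sits to the right of $\sigma$ and is not absorbed by $e_1$. The required statement, $e_1\sigma\rho\beta\sim_E e_1\sigma\beta$ for all such $\rho$, is equivalent to normality of $\mathrm{Stab}(e_1)\cdot\E_n(R,R_+)$ in $\SL_n(R,R_+)$. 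That is where the real content of \cite[Theorem~2.33]{rao2017homotopy} lies, and it is not automatic.

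\textbf{How to repair it.} You only need what you already have, but the order must be reversed. The commutator fact is needed for the \emph{existence} of the group law, not just for its commutativity. Write $\sigma\rho=\rho\sigma[\sigma^{-1},\rho^{-1}]$. Then
\[
e_1\sigma\rho\beta=e_1\sigma[\sigma^{-1},\rho^{-1}]\beta=e_1\sigma\beta\cdot\bigl(\beta^{-1}[\sigma^{-1},\rho^{-1}]\beta\bigr).
\]
Apply your steps (1)--(5) with $\sigma^{-1}\in\SL_n(R,R_+)$ in place of $\sigma$ (using the homotopy $\sigma(T)^{-1}$) and $\rho^{-1}$ in place of $\beta$. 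This gives $[\sigma^{-1},\rho^{-1}]\in\E_n(R,R_+)$. Relative normality then puts its conjugate by $\beta$ in $\E_n(R,R_+)$. With this added and proved first, (i)--(iii) establish well-definedness, and the rest of your argument (identity, inverses, associativity, homomorphism property, commutativity) goes through.
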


We recall the well-known "Swan-Weibel's homotopy trick", which is the main ingredient to handle the graded case.

\begin{definition}\label{def:definition2.2'}
	Let $R=\oplus_{d\geq 0}R_d$ be a graded ring. For $r=r_0+r_1+\cdot\cdot\cdot\in R$ with $r_i\in R_i$, define the Swan-Weibel's homotopy map 
	\begin{ceqn}
		\begin{align*}
			\epsilon : R\rightarrow R[X],~~\epsilon (r)(X)=r_0+r_1X+r_2X^2+\cdot\cdot\cdot+r_iX^i+\cdot\cdot\cdot .
		\end{align*}
	\end{ceqn} 
	
	Thus $\epsilon (r)(0)=r_0\in R_0$ and $\epsilon (r)(1)=r$. 
\end{definition}
In particular, $\epsilon$ gives a homotopy inside $R[X]$ connecting the degree-zero component $R_0$ with the whole ring $R$.
For any $a\in R_0$, one can also evaluate $\epsilon(R)(X)$ at $X=a$, but the important cases are $a=0$ (projection to $R_0$) and $a=1$ (recovery of $r$).\\
The map $\epsilon$ induces a group homomorphism at the $\GL_n(R)$ level for every $n\geq 1$, i.e. for $\alpha \in \GL_n(R)$ we get a map \\ 
	   	$$\epsilon : \GL_n(R)\rightarrow \GL_n(R[X])$$ defined by \\
		$$\alpha=\alpha_0\oplus\alpha_1\oplus\alpha_2\oplus\cdot\cdot\cdot\mapsto \epsilon (\alpha)(X)= \alpha_0\oplus\alpha_1X\oplus\alpha_2X^2\oplus\cdot\cdot\cdot,$$
		where $\alpha_i\in M_n(R_i)$. As above for $a\in R_0$, we evaluate $\epsilon (\alpha)(X)=\alpha_0\oplus\alpha_1X\oplus\alpha_2X^2\oplus\cdot\cdot\cdot$ at $X=a$.

\begin{remark}\label{rmk:remark2.1'}
	The Swan-Weibel's homotopy map provides a homotopy between the projection $R\rightarrow R_0$ (via evaluation at $X=0$) and the identity on $R$ (via evaluation at $X=1$). This is the key idea behind the "Swan-Weibel's homotopy trick": one can "deform" algebraic objects over the whole graded ring $R$ to objects to its degree-zero part $R_0$, and then back. This homotopy trick is particularly useful for transferring problems about $\GL_n(R)$ or projective modules over $R$ to the simpler setting of $R_0$.
	
\end{remark}
\begin{definition}\label{def:definition2.2}
	A matrix $V\in M_n(R)$ is said to be alternating if there exists $W\in M_n(R)$ such that $V=W-W^T$, i.e. $V$ is a skew-symmetric and its diagonal entries are zero. 
\end{definition}

\noindent \textbf{The group $W_E(R)$}: If $V\in M_{2n}(R)$ is alternating then $\text{det}(V)=(\text{Pf}(V))^2$ where Pf is a polynomial (called the Pfaffian) in the matrix elements with coefficients $\pm 1$.\\

If $\phi \in M_r(R) , \psi \in M_s(R)$ are matrices then $\phi \perp \psi$ denotes the matrix $\begin{pmatrix}
	\phi & 0 \\
	0 & \psi 
\end{pmatrix}
\in M_{r+s}(R)$. Let $\psi_1$ denote the matrix 
$\begin{pmatrix}
	0 & 1 \\
	-1 & 0 
	
\end{pmatrix}
\in \E_2(\mathbb{Z})$, and $\psi_r$ is inductively defined by $\psi_r=\psi_{r-1}\perp \psi_1\in \E_{2r}(\mathbb{Z})$, for $r\geq 2$.
\\

For any $\phi \in M_{2r}(R)$ and any alternating matrix $V\in M_{2r}(R)$, we have $\text{Pf}(\phi^TV\phi)=\text{Pf}(V)\text{det}(\phi)$. For alternating matrices $V,\, W$ it is easy to check that $\text{Pf}(V\perp W)=(\text{Pf}(V))(\text{Pf}(W))$.
Note that we need to fix a sign in the choice of $\text{Pf}$; so insist $\text{Pf}(\psi_r)=1$ for all $r$. \\
Two matrices $\alpha \in M_{2r}(R),\, \beta \in M_{2s}(R)$ are said to be equivalent w.r.t. $\E(R)$ if there exists a matrix $\epsilon \in \SL_{2(r+s+l)}(R)\cap \E(R)$, such that $\alpha \perp \psi_{s+l}=\epsilon^T(\beta \perp \psi_{r+l})\epsilon$, for some $l$. Denote this by $\alpha \sim_E \beta$. Then $\sim_E$ is an equivalence relation; denote by $[\alpha]$ the orbit of $\alpha$ under this relation.
It is easy to see \cite{suslin1976serre} that $\perp$ induces the structure of an abelian group on the set of all equivalence classes of alternating matrices with Pfaffian $1$; this group is called "elementary symplectic Witt group" and is denoted by \textbf{$W_E(R)$}. \\

\noindent \textbf{The Vaserstein rule}: \\

Let $R$ be a commutative ring and $v=(v_0,v_1,v_2),\, w=(w_0,w_1,w_2)\in \Um_3(R)$ such that $v\cdot w^T=1$. In \cite{suslin1976serre}, Vaserstein associated an alternating matrix $V(v,w)$ to the pair $v,\, w$: \\
$$V(v,w)=\begin{bmatrix}
	0 & v_0 & v_1 & v_2 & \\
	-v_0 & 0 & w_2 & -w_1 \\
	-v_1 & -w_2 & 0 & w_0 \\
	-v_2 & w_1 & -w_0 & 0 
\end{bmatrix} \in \SL_4(R)$$ with $\text{Pf}(V(v,w))=1$. Associating $V(v,w)$ to $v$ yields a well defined map $V:\frac{\Um_3(R)}{\E_3(R)}\rightarrow \W_E(R)$ called the Vaserstein symbol (see \cite[Theorem~5.2($a_1$)]{suslin1976serre}). In \cite[Lemma~5.1]{suslin1976serre}, Vaserstein proved that the element of $\W_E(R)$ defined by $V(v,w)$ depends only on $v$, and not on the choice of $w$. Furthermore, in \cite[Theorem~5.2]{suslin1976serre}, the Vaserstein rule is established, which is as follows: \\

\begin{lemma}\label{lemma:gsharma2.1} (\cite[Theorem~$5.2(a_2$)]{suslin1976serre}) Let $R$ be a commutative ring  and $v_1=(a_0,a_1,a_2),\, v_2=(a_0,b_1,b_2)$ be two unimodular rows. Suppose $a_0a_0'+a_1a_1'+a_2a_2'=1$ and let $$v_3=\left(a_0,(b_1,b_2)\begin{pmatrix}
		a_1 & a_2 \\
		-a_2' & a_1'
	\end{pmatrix}\right)\in \Um_3(R).$$ 
	Then for any $w_1,w_2,w_3 \in \Um_3(R)$ such that $v_i\cdot w_i^T=1$ for $i=1,2,3$, we have $$[V(v_1,w_1)]\perp [V(v_2,w_2)]=[V(v_3,w_3)]~ \text{in}~W_E(R).$$
	
\end{lemma}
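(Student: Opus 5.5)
The paper quotes this from \cite[Theorem~5.2($a_2$)]{suslin1976serre}. To prove it directly, I would follow the classical route of Suslin--Vaserstein: reduce to an explicit congruence between alternating matrices of size $8$ and size $4$.

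\textbf{Step 1: the freedoms available.} By \cite[Lemma~5.1]{suslin1976serre}, the class $[V(v,w)]$ depends only on $v$ and not on $w$. So I may choose $w_1,w_2,w_3$ conveniently. Take $w_1=(a_0',a_1',a_2')$. Take $w_2=(b_0',b_1',b_2')$ with $a_0b_0'+b_1b_1'+b_2b_2'=1$. Then write down an explicit $w_3$ for $v_3$ using
\[
\det\begin{pmatrix} a_1 & a_2\\ -a_2' & a_1'\end{pmatrix}=a_1a_1'+a_2a_2'=1-a_0a_0'.
\]
This identity says the $2\times 2$ matrix is invertible modulo $a_0$, so a right inverse of $v_3$ can be written polynomially in the $a_i,a_i',b_i,b_i'$. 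I would also use the elementary facts that the Vaserstein symbol is constant on $\E_3(R)$-orbits (the well-definedness quoted above) and that $[V(e_1,e_1)]$ is trivial. Together these let me replace any of the rows by elementarily equivalent ones whenever that simplifies the algebra.

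\textbf{Step 2: reduce to a universal matrix identity.} All data are polynomial in $a_i,a_i',b_i,b_i'$ subject to two relations. So it suffices to prove the statement over the universal ring
\[
A=\mathbb{Z}[a_i,a_i',b_1,b_2,b_i']/(\sum a_ia_i'-1,\;a_0b_0'+b_1b_1'+b_2b_2'-1)
\]
and then specialize. The goal is to exhibit $\epsilon\in \SL_8(A)\cap\E(A)$ with
\[
V(v_1,w_1)\perp V(v_2,w_2)=\epsilon^T\bigl(V(v_3,w_3)\perp\psi_2\bigr)\epsilon .
\]
This is equality in $\W_E$ with $l=0$; I would allow a further stabilization by $\psi_l$ if that helps.

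\textbf{Step 3: construct $\epsilon$ by symplectic row and column operations.} I would build $\epsilon$ as a product of transvection-type congruences $X\mapsto e_{ij}(\lambda)^T X\, e_{ij}(\lambda)$.

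The first block has first row $(0,a_0,a_1,a_2)$. Adding multiples of the first-block rows and columns to the second block will turn the second block's entries $(b_1,b_2)$ into $(b_1,b_2)\begin{pmatrix} a_1&a_2\\ -a_2'&a_1'\end{pmatrix}$. Since $a_0$ is shared by both rows, the extra terms produced should be multiples of $a_0$, and these can be cleared using the entry $a_0$ itself.

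Once the second block carries the row $v_3$, the remaining entries of the first block are $a_1,a_2,a_1',a_2'$ (together with $a_0,a_0'$). These I expect to reduce to a hyperbolic $\psi_2$ by standard operations. The essential input there is that $(a_1,a_2,a_0)$ is unimodular, so its symbol is absorbed.

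\textbf{Main obstacle.} The real difficulty is Step 3: finding the congruence matrix $\epsilon$ explicitly and checking that it is elementary. Pfaffian bookkeeping guarantees consistency, since both sides have Pfaffian $1$, but it does not produce $\epsilon$.

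If direct manipulation stalls, my fallback is a Mennicke-symbol style argument. First treat the case where $a_0$ is a unit. There all three rows are elementarily equivalent to $e_1$, so the identity is trivial. Then pass to the general case by a patching/homotopy argument: replace $a_0$ by $a_0+Xa_0''$ and use Quillen-type local-global principles for the elementary symplectic Witt group.
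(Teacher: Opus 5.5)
There is a genuine gap. The paper gives no argument of its own here; the rule is imported from Suslin--Vaserstein, so your proposal has to stand alone, and as written it does not.

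Steps 1 and 2 are fine, but they are only bookkeeping. Lemma 5.1 lets you fix $w_1,w_2$. An explicit $w_3$ exists, for instance $w_3=(a_0'+b_0'-a_0a_0'b_0',\ a_1'b_1'-a_2b_2',\ a_2'b_1'+a_1b_2')$. Specializing from a universal ring is legitimate. The entire content of the lemma is the existence of the elementary congruence in Step 3, and you never produce it. That step is written as ``will turn'', ``should be'', ``I expect'', and you yourself call it the main obstacle. What you have is a reformulation of the statement, not a proof.

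The guidance you give for Step 3 also points the wrong way, in two respects.
\begin{enumerate}
\item A single cross-block congruence by $e_{ij}(\lambda)$ never changes the entries of either diagonal block. It only creates entries in row and column $j$ at positions belonging to the other block. So passing from $(b_1,b_2)$ to $(b_1,b_2)M$ needs a genuinely coupled chain of moves, and finding that chain is exactly what has to be done.
\item The first block cannot be brought to $\psi_2$ ``because $(a_0,a_1,a_2)$ is unimodular'', since unimodularity does not make $[V(v_1,w_1)]$ trivial. Take $R=\mathbb{R}[x,y,z]/(x^2+y^2+z^2-1)$ and $v_1=(x,y,z)$. The Vaserstein symbol is injective on $\Um_3(R)/\E_3(R)$ for two-dimensional rings, and $v_1$ is not completable, so $[V(v_1,w_1)]\neq 0$. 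Any successful reduction must therefore use the second block essentially, and your sketch does not address how.
\end{enumerate}

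The fallback fails too. In the same example there is no $v(X)\in\Um_3(R[X])$ with $v(0)=v_1$ and $v(1)$ having a unit entry. By Lindel's theorem $\ker v(X)$ would be extended from $R$, which would force $\ker v_1\cong\ker v(1)$ to be free. So a deformation $a_0\mapsto a_0+Xa_0''$ toward your unit case does not exist in general. Separately, a Quillen-type local--global step would need the rule over $R_{m}[X]$, where the first entry is still not a unit, so it does not reduce to the trivial case either.
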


Let $R$ be a ring and $v=(v_0,v_1,v_2),~v^{(n)}=(v_0^n,v_1,v_2)\in \Um_3(R)$ and let $w,~ w_1\in \Um_3(R)$ such that $v\cdot w^T=v^{(n)}\cdot w_1^T=1$. The following is proved in \cite[Lemma~$7.4$]{fasel2012stably}. 

\begin{lemma}\label{lemma:lemma2.1'}(\cite[Lemma~$7.4$]{fasel2012stably})
	Let $R$ be a ring and $v=(v_0,v_1,v_2)\in \Um_3(R)$. Suppose that $(v_0,v_1,v_2)$ and $(-v_0,v_1,v_2)$ are in the same elementary orbit. Then for any $n\in \mathbb{N}$,
	\begin{ceqn}
	\begin{align*}
		[V(v,w)]^n=[V(v^{(n)},w_1)]
	\end{align*}
\end{ceqn}
in $\W_E{R}$.
\end{lemma}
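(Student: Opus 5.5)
The plan is to induct on $n$, using Vaserstein's rule (Lemma~\ref{lemma:gsharma2.1}) to multiply symbols and the hypothesis $(v_0,v_1,v_2)\sim_E(-v_0,v_1,v_2)$ to absorb the sign changes that appear when coordinates are permuted. Throughout I use two facts recalled above. First, the class $[V(u,w)]\in \W_E(R)$ depends only on the elementary orbit of $u$, and not on $w$. Second, for $\lambda$ with $\lambda^2=1$, a signed transposition such as $(x,y,z)\mapsto(-y,x,z)$, or a sign change $\mathrm{diag}(\lambda,\lambda,1)$, lies in $\E_3(R)$, since these matrices come from $\E_2(\mathbb{Z})\subset\E_3$. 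The case $n=1$ is trivial, so assume the statement holds for $n-1$.

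\textbf{Step 1: reach a common first entry.} Lemma~\ref{lemma:gsharma2.1} needs two rows with the same first coordinate, but $v^{(n-1)}$ and $v$ share $v_1,v_2$ rather than their first entries. Using signed transpositions in $\E_3(R)$ I will replace them by
\[
(v_1,-v_0^{n-1},v_2)\sim_E v^{(n-1)},\qquad (v_1,-v_0,v_2)\sim_E v .
\]
Both rows now have first entry $v_1$ and third entry $v_2$.

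\textbf{Step 2: multiply.} Apply Lemma~\ref{lemma:gsharma2.1} with $(a_0,a_1,a_2)=(v_1,-v_0^{n-1},v_2)$ and $(a_0,b_1,b_2)=(v_1,-v_0,v_2)$. This gives the third row
\[
v_3=\bigl(v_1,\,(-v_0,v_2)\begin{pmatrix}-v_0^{n-1}& v_2\\ -a_2'& a_1'\end{pmatrix}\bigr).
\]
I then carry out the standard Vaserstein--Suslin computation, reducing modulo $v_1$ and using $v_1a_0'-v_0^{n-1}a_1'+v_2a_2'=1$ with elementary operations. This should show $v_3\sim_E (v_1,\lambda v_0^{n},v_2)$ for an explicit sign $\lambda=\pm1$. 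This is the classical identity that $(a,b,c)$ and $(a,b',c)$ together yield $(a,bb',c)$ up to sign and elementary moves.

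\textbf{Step 3: return to the original ordering and fix signs.} A signed transposition takes $(v_1,\lambda v_0^{n},v_2)$ back to $(\pm v_0^{n},v_1,v_2)$. Combining with the induction hypothesis then gives
\[
[V(v,w)]^{n}=[V(v^{(n-1)},w')]\perp[V(v,w)]=[V((\pm v_0^{n},v_1,v_2),w'')].
\]

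The main obstacle is that leftover sign in front of $v_0^{n}$ (equivalently, a sign on $v_1$). At the base level the hypothesis removes it: since $\mathrm{diag}(-1,-1,1)\in\E_3(R)$, we get $(v_0,-v_1,v_2)\sim_E(-v_0,v_1,v_2)\sim_E v$. For a power $v_0^{n}$, however, the hypothesis is not directly available. My first attempt will be to choose the signs in Step 1 consistently, so that the two transpositions contribute signs that cancel and $\lambda=1$ comes out exactly. If that fails, I would strengthen the induction to carry both statements, $[V(v)]^{k}=[V(v^{(k)})]$ and $(v_0^{k},v_1,v_2)\sim_E(-v_0^{k},v_1,v_2)$. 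The second statement can itself be propagated by applying Lemma~\ref{lemma:gsharma2.1} to $v^{(k-1)}$ and $(-v_0,v_1,v_2)$, which has the same symbol as $v$ by hypothesis. Comparing the resulting rows would then show that the symbols of $(\pm v_0^{k},v_1,v_2)$ coincide, which is all that Step 3 needs.
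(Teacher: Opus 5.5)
\textbf{There is a genuine gap in Step~2.} The paper does not prove this lemma; it quotes it from Fasel--Rao--Swan. So I am judging your argument on its own merits.

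Carry out the computation you defer, using $v_2a_2'=1-v_1a_0'+v_0^{n-1}a_1'$. You get
\[
v_3=\bigl(v_1,\;v_0^{n}-v_2a_2',\;v_2(a_1'-v_0)\bigr)\sim_E\bigl(v_1,\;-1-v_0^{n-1}s,\;v_2s\bigr),\qquad s=a_1'-v_0 .
\]
The middle entry agrees with $v_0^{n}$ only modulo $(v_1,v_2)$, not modulo $(v_1,v_2s)$. This is the twisted (weak Mennicke) product that also appears in van der Kallen's group law, and routine elementary moves do not remove the factor $s$.

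In fact the ``classical identity'' you invoke, that $(a,b,c)$ and $(a,b',c)$ combine to $(a,\pm bb',c)$, is false over a general ring, even in $\W_E$. Here is a counterexample.
\begin{itemize}
\item Take $R_u=\mathbb{R}[a,b,c,a',b',c']/(aa'+bb'+cc'-1)$ and $v=(a,b,c)$. Its real points $\{(v,w): v\cdot w=1\}$ retract onto $S^2$.
\item Passing to real points gives a homomorphism $\W_E(R_u)\to\pi_2\bigl(\varinjlim_m \SL_{2m}(\mathbb{R})/\Sp_{2m}(\mathbb{R})\bigr)\cong\pi_2(SO/U)\cong\mathbb{Z}$. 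This works because elementary matrices are null-homotopic and $\perp$ becomes addition.
\item The image of $[V(u,w)]$ is $\deg(u\colon S^2\to\mathbb{R}^3\setminus 0)$ times a generator. The reason is that $V(v,v)$ on the unit sphere is a homeomorphism onto $SO(4)/U(2)\cong S^2$.
\item For $n=2$, $[V(v_3)]=[V(v,w)]^2$ therefore maps to $2$.
\item Every row $(v_1,\pm v_0^2,v_2)$ has a coordinate of constant sign, hence degree $0$, and so maps to $0$.
\end{itemize}
Your Step~2 never uses the hypothesis, so it would apply verbatim to $R_u$. There its conclusion $v_3\sim_E(v_1,\lambda v_0^2,v_2)$ is false for both signs, so no choice of signs in Step~1 can rescue it.

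\textbf{The hypothesis cannot be confined to a final sign cleanup.} The same example shows this. In $R_u$ the row $(-a,b,c)$ has degree $-1$, so the hypothesis fails there. So does the conclusion $[V(v,w)]^2=[V(v^{(2)},w_1)]$. Hence the condition $v\sim_E(-v_0,v_1,v_2)$ is not a device for removing a sign at the end, as in your Step~3. It must be used inside the multiplicative step, to untwist $(v_1,-1-v_0^{n-1}s,v_2s)$, and your proposal gives no mechanism for this.

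The fallback induction does not repair this either.
\begin{itemize}
\item You propose to propagate equality of the symbols of $(\pm v_0^{k},v_1,v_2)$ by comparing the products with $v$ and with $(-v_0,v_1,v_2)$. That comparison again presupposes the uniform product formula of Step~2.
\item Vaserstein's rule only ever gives equalities in $\W_E(R)$. It cannot produce the elementary equivalences $(v_0^{k},v_1,v_2)\sim_E(-v_0^{k},v_1,v_2)$ that you first propose to carry through the induction.
\end{itemize}
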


\begin{definition}\label{def:definition2.3}(Symplectic group $\Sp_{2n}(R)$). We define the symplectic group as $\Sp_{2n}(R)=\{\alpha\in \GL_{2n}(R):\alpha^t\psi_n\alpha=\psi_n\}$. We define the relative symplectic group as $\Sp_{2n}(R,I)=\{\alpha\in \Sp_{2n}(R):\alpha\equiv I_{2n}~(\text{mod~I}~)\}$ for any ideal $I$ of $R$.
	
\end{definition}

Let $\sigma\in S_{2n}$ denote the permutation of the natural numbers given by $\sigma(2i)=2i-1$ and $\sigma(2i-1)=2i;~i=1,2,\cdot\cdot\cdot,n$.

\begin{definition}\label{def:definition2.4}(Elementary symplectic group $\ESp_{2n}(R)$). We define for $z\in R,~1\leq i\neq j\leq 2n$,\\
	\begin{ceqn}
	\begin{equation}
	se_{ij}(z)=	\begin{cases}
			1_{2n}+z\E_{ij}   &  \text{if $i=\sigma(j)$;}\\
			1_{2n}+z\E_{ij}-(-1)^{i+j}z\E_{\sigma(j)\sigma(i)}  & \text{if $i\neq \sigma(j)$.}
		\end{cases}
	\end{equation}
\end{ceqn}
	It is easy to see that all these generators belong to $\Sp_{2n}(R)$. We call them elementary symplectic matrices over $R$, and the subgroup of $\Sp_{2n}(R)$ generated by them is called the elementary symplectic group $\ESp_{2n}(R)$. Similarly, the subgroup generated by $se_{ij}(z),~z\in I$ is denoted by $\ESp_{2n}(I)$. The group $\ESp_{2n}(R,I)$ is defined to be the smallest normal subgroup of $\ESp_{2n}(R)$ containing $\ESp_{2n}(I)$.
	
\end{definition}
 To prove the relative version of Theorem~\ref{thm:theorem1.5}, we show how the relative case $(R\neq I)$ can be reduced to the absolute case (see \cite[3.19]{van1983group}).
\begin{definition}\label{def:definition2.5}(The excision ring)
	If $I$ is an ideal of $R$, one constructs the ring $\mathbb{Z}\oplus I$ with multiplication defined by 
	\begin{ceqn}
		\begin{align*}
			(n,i)(m,j)=(nm,nj+mi+ij)
		\end{align*}
	\end{ceqn}
	for $m,~n\in \mathbb{Z},~i,j\in I$. If dimension of the ring is $d\geq 1$, then the maximal spectrum of $\mathbb{Z}\oplus I$ is the union of finitely many subspaces of dimension at most $d$.
	
\end{definition}

There is a natural ring homomorphism $f:\mathbb{Z}\oplus I\rightarrow R$ given by $(m,i)\mapsto m+i\in R$. Let $v=(1+i_1,i_2,\cdot\cdot\cdot,i_n)\in \Um_n(R,I)$ where $i_j$'s are in $I$. Then we shall say $\tilde{v}=(\tilde{1}+\tilde{i_1},\tilde{i_2},\cdot\cdot\cdot,\tilde{i_n})\in \Um_n(\mathbb{Z}\oplus I,0\oplus I)$ for $\tilde{1}=(1,0),~\tilde{i_j}=(0,i_j)$ to be a lift of $v$. Clearly, $f$ sends $\tilde{v}$ to $v$.\\

We shall define $\MSE_n(R)=\frac{\Um_n(R)}{\E_n(R)}$ and likewise $\MSE_n(R,I)=\frac{\Um_n(R,I)}{\E_n(R,I)}$. We recall the Excision theorem (see \cite[Theorem~3.21]{van1983group}).

\begin{theorem}\label{thm:theorem2.1'}(Excision theorem)
	Let $n\geq 3$ be an integer and $I$ be an ideal of a commutative ring $R$. Then the natural maps $F:\MSE_n(\mathbb{Z}\oplus I,0\oplus I)\rightarrow \MSE_n(R,I)$ defined by $[(a_i)]\mapsto [(f(a_i))]$ and $G:\MSE_n(\mathbb{Z}\oplus I,0\oplus I)\rightarrow \MSE_n(\mathbb{Z}\oplus I)$ defined by $[(a_i)]\mapsto [(a_i)]$ are bijections.
	
\end{theorem}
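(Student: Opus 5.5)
The statement is the Excision theorem, quoted from \cite[Theorem~3.21]{van1983group}. I would prove the two bijectivity claims separately and then compose. Throughout write $A=\mathbb{Z}\oplus I$ and $J=0\oplus I$. The ring $A$ has the retraction $A\to\mathbb{Z}$, $(m,i)\mapsto m$, with kernel $J$, and $f$ restricts to the identity $J\to I$. The guiding idea is that a row congruent to $e_1$ modulo the ideal involves only data from the ideal. Hence $\Um_n(A,J)$ and $\Um_n(R,I)$ are "the same" sets of rows, and the only issue is comparing the elementary groups acting on them.

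\emph{Surjectivity of $F$.} Let $v=(1+i_1,i_2,\dots,i_n)\in\Um_n(R,I)$ and let $\tilde v$ be its lift as defined in the paper. I first need $\tilde v$ to be unimodular over $A$. For this I will choose a witness $w$ with $v\cdot w^T=1$ and $w\equiv e_1 \pmod I$. Starting from an arbitrary $u$ with $v\cdot u^T=1$ we have $u_1\equiv 1\pmod I$, and I would correct $u$ using products $v_ju_j\in I$ to push $u_2,\dots,u_n$ into $I$ while keeping $v\cdot w^T=1$. If this cannot be done by a direct formula, I would take the existence of such a $w$ as part of the definition of $\Um_n(R,I)$, as van der Kallen does. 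Writing $w=(1+j_1,j_2,\dots)$, the identity $v\cdot w^T=1$ is an identity among elements $1,i_k,j_k$. Since $f$ is injective on $J$, this identity lifts verbatim to $A$, so $\tilde v\in\Um_n(A,J)$ and $F[\tilde v]=[v]$.

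\emph{Bijectivity of $G$.} For surjectivity, take $v\in\Um_n(A)$. Its image $\bar v$ over $\mathbb{Z}$ is unimodular, and $\E_n(\mathbb{Z})=\SL_n(\mathbb{Z})$ acts transitively on $\Um_n(\mathbb{Z})$. So there is $\bar\varepsilon\in\E_n(\mathbb{Z})\subset\E_n(A)$ with $\bar v\bar\varepsilon=e_1$. Then $v\bar\varepsilon\equiv e_1 \pmod J$, which lands in the relative set. For injectivity, suppose $v,v'\in\Um_n(A,J)$ and $v'=v\varepsilon$ with $\varepsilon\in\E_n(A)$. Since $\E_n(A)=\E_n(\mathbb{Z})\ltimes\E_n(A,J)$, write $\varepsilon=\bar\varepsilon\,\eta$ with $\bar\varepsilon\in\E_n(\mathbb{Z})$ and $\eta\in\E_n(A,J)$. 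Reducing modulo $J$ gives $e_1\bar\varepsilon=e_1$, so $\bar\varepsilon$ lies in the stabiliser of $e_1$ in $\SL_n(\mathbb{Z})$. That stabiliser is generated by $\E_{n-1}(\mathbb{Z})$ in the lower block together with the matrices $e_{1j}(\lambda)$. Any element $\tau$ of this form satisfies $v\tau=v\,\tau'$ with $\tau'\in\E_n(A,J)$. Indeed, $v$ differs from $e_1$ only by entries in $J$, so conjugating such $\tau$ past $v-e_1$ produces relative elementary matrices, using normality of $\E_n(A,J)$ in $\E_n(A)$ for $n\geq 3$. Hence $v\sim v'$ under $\E_n(A,J)$.

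\emph{Injectivity of $F$ (main obstacle).} Suppose $\tilde v,\tilde v'\in\Um_n(A,J)$ and $f(\tilde v')=f(\tilde v)\varepsilon$ with $\varepsilon\in\E_n(R,I)$. The difficulty is that $\varepsilon$ is a product of conjugates $e_{ji}(r)e_{ij}(x)e_{ji}(-r)$ with $r\in R$ arbitrary, which do not lift to $A$. I would first replace $\E_n(R,I)$-orbits on rows congruent to $e_1$ by orbits under the subgroup generated by $\E_n(I)$ together with the moves "add an $I$-multiple of one coordinate to another". I would justify this using Suslin's/Vaserstein's generation result for $\E_n(R,I)$ when $n\geq 3$, together with the fact that $v$ has a relative witness $w$. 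The point is that a conjugate $e_{ji}(r)e_{ij}(x)e_{ji}(-r)$ applied to $v$ can be rewritten using $x\in I$ and the coordinates of $v$ and $w$, so only elements of $I$ appear. All such moves lift to $\E_n(A,J)$ because $f$ is the identity on $I$. This gives injectivity, and I expect this rewriting lemma to carry essentially all the work.
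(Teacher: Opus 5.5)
The paper gives no proof of this statement. It is quoted from van der Kallen [Theorem~3.21], so your proposal has to stand on its own.

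\textbf{What is fine.} Surjectivity of $G$ via $\SL_n(\mathbb{Z})=\E_n(\mathbb{Z})$, the splitting $\E_n(A)=\E_n(\mathbb{Z})\ltimes\E_n(A,J)$, and surjectivity of $F$ are all correct. For $F$ you do not need to build the relative witness into the definition, because one always exists. If $v\cdot u^T=1$ and $t=\sum_{j\geq 2}u_jv_j\in I$, then $w=(u_1(1+t),tu_2,\dots,tu_n)$ satisfies $v\cdot w^T=(1-t)(1+t)+t^2=1$ and $w\equiv e_1 \pmod I$. One small slip: under the right action, the stabiliser of $e_1$ is generated by $1\perp\E_{n-1}(\mathbb{Z})$ and the $e_{j1}(\lambda)$ with $j\geq 2$. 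The matrices $e_{1j}(\lambda)$ move $e_1$.

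\textbf{Injectivity of $F$.} Both injectivity claims are left unproved, and the mechanism you propose for $F$ cannot work. Moves of the form ``add an $I$-multiple of one coordinate to another'' are exactly the generators of $\E_n(I)$, so your rewriting lemma amounts to $v\E_n(R,I)=v\E_n(I)$. This is false. Along an $\E_n(I)$-orbit inside $\Um_n(R,I)$, the first entry changes only by terms $yv_k$ with $y\in I$ and $v_k\in I$ ($k\geq 2$), so it is constant modulo $I^2$. But $e_1\,e_{21}(1)e_{12}(x)e_{21}(-1)=(1-x,x,0,\dots,0)$. Taking $R=\mathbb{Z}[x]$ and $I=xR$, we have $x\notin I^2$, so this row lies in $e_1\E_n(R,I)$ but not in $e_1\E_n(I)$. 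No use of the witness $w$ can turn such conjugates into $\E_n(I)$-moves.

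What actually lifts to $A$ is the larger group $\E_n(A,J)$, which is generated by the $\E_n(\mathbb{Z})$-conjugates of $\E_n(J)$. A correct argument must use these conjugates, and your plan explicitly discards them. Even with them, the lifting cannot be done elementwise on the group. After identifying $\GL_n(A,J)$ with $\GL_n(R,I)$ via $f$, the subgroup $f(\E_n(A,J))$ is in general strictly smaller than $\E_n(R,I)$; this is the classical failure of excision for relative $K_1$. So you must show that the discrepancy is absorbed by the stabiliser of the particular row. That is where the real work of the excision theorem lies, and nothing in your proposal does it.

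\textbf{Injectivity of $G$.} Your reduction to ``$v\tau\in v\E_n(A,J)$ for every $\tau\in\E_n(\mathbb{Z})$ fixing $e_1$'' is correct. However, given the splitting, this claim is equivalent to injectivity of $G$, so it has only been restated. The justification ``conjugate $\tau$ past $v-e_1$'' does not parse, since $v-e_1$ is a row, not a group element. The conjugation trick works only when $v=e_1\gamma$ with $\gamma\in\E_n(A,J)$: then $v\tau=e_1(\tau^{-1}\gamma\tau)=v\gamma^{-1}(\tau^{-1}\gamma\tau)$. For a general $v\in\Um_n(A,J)$, which is the only interesting case when $\MSE_n$ is nontrivial, this step needs a genuine lemma that you have not supplied.
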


The following is proved in \cite[Proposition~3.1]{keshari2009cancellation}.

\begin{prop}\label{prop:proposition2.1}
	Let $R$ be a ring of dimension $d$ and $I$ be a finitely generated ideal of $R$. \\
	Consider the Cartesian square 
	\begin{ceqn}
		\begin{align*}
		\begin{CD}
			C @>>> R \\
			@VVV    @VVV \\
			R @>>> R/I
		\end{CD}
		\end{align*}
	\end{ceqn}
	Then, $C$ is a finitely generated algebra of dimension $d$ over $R$ and integral over $R$. In fact, $C\simeq R\oplus I$ with the coordinate wise addition and the multiplication defined by $(a,i)(b,j)=(ab,aj+ib+ij),~\tilde{1}=(1,0)$ being the identity in $C$. In particular, if $R$ is an affine algebra of dimension $d$ over a field $K$, then $C\simeq R\oplus I$ is also an affine algebra of dimension $d$ over $K$.
\end{prop}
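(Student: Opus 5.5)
The statement to prove is Proposition~\ref{prop:proposition2.1}. The plan is to construct the fibre product explicitly, identify it with $R\oplus I$, and then read off finiteness, integrality and dimension from that description.

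\textbf{Step 1: identify $C$.} The fibre product is
$$C=\{(a,b)\in R\times R : a\equiv b \pmod I\},$$
and it is a subring of $R\times R$. Send $(a,b)$ to $(a,b-a)$. This is a bijection $C\to R\oplus I$ with inverse $(a,i)\mapsto (a,a+i)$. Transporting the componentwise ring structure of $R\times R$ along this bijection gives
$$(a,a+i)(b,b+j)=(ab,\,ab+aj+ib+ij),$$
which corresponds to $(ab,\,aj+ib+ij)$. This is exactly the stated multiplication, with identity $(1,1)\leftrightarrow (1,0)$. The addition is coordinatewise.

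\textbf{Step 2: finiteness and integrality.} Make $C$ an $R$-algebra via the diagonal $R\to C$, $a\mapsto (a,0)$, which is a ring map. Let $I=(x_1,\dots,x_k)$ be finitely generated. Then $C$ is generated as an $R$-module by $(1,0)$ and the elements $(0,x_t)$. Hence $C$ is a finite $R$-module, so in particular it is finitely generated as an $R$-algebra and integral over $R$. Alternatively, each element $(0,i)$ satisfies $T^2-iT=0$, since $(0,i)^2=(0,i^2)=(i,0)\cdot(0,i)$; this gives integrality directly.

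\textbf{Step 3: dimension.} Since $C$ is integral over $R$, we get $\dim C\le \dim R$. For the reverse inequality, note that the projection $C\to R$, $(a,i)\mapsto a$, corresponds to the first projection of the fibre product. It is a surjective ring map with kernel $0\oplus I$, and since $(0\oplus I)^2$ need not vanish, the plan is not to rely on the kernel but on the quotient: $R$ is a quotient of $C$, so $\dim C\ge \dim R$. Therefore $\dim C=d$. This step is where some care is needed, namely checking that the diagonal map is injective (so that going-up/incomparability applies to give $\dim C\le\dim R$) and noting that the quotient gives the lower bound. Both checks are routine, so no step is a real obstacle.

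\textbf{Step 4: the affine case.} If $R$ is an affine $K$-algebra, then $C$ is finite over $R$, so it is a finitely generated $K$-algebra. By Step 3 it has dimension $d$, hence it is an affine $K$-algebra of dimension $d$.
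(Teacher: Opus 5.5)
Your proof is correct. The identification $(a,b)\mapsto(a,b-a)$ of the fibre product with $R\oplus I$ is sound. So is the observation that $C$ is generated as an $R$-module, via the diagonal, by $(1,0)$ and the $(0,x_t)$. Together with the two inequalities $\dim C\le\dim R$ (from integrality) and $\dim C\ge\dim R$ (from the surjection $(a,i)\mapsto a$), these establish every assertion, including the affine case. The paper gives no argument of its own here; it quotes the result from \cite[Proposition~3.1]{keshari2009cancellation}. Your direct verification therefore just supplies the standard proof.

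One small correction to your commentary on Step~3. Injectivity of $R\to C$ is not what the upper bound needs, since incomparability holds for any integral homomorphism. What injectivity gives, together with lying over and going up, is the lower bound $\dim C\ge\dim R$ directly. So the quotient argument is an alternative route to that inequality, not a necessity.
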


\begin{remark}\label{rmk:remark2.1}
	We shall call $C$ as the excision algebra of $R$ w.r.t. the ideal $I$. There is a natural ring homomorphism $g:R\oplus I\rightarrow R$ given by $(x,i)\mapsto x+i\in R$.
	
\end{remark}

Let $v=(1+i_1,i_2,\cdot\cdot\cdot,i_n)\in \Um_n(R,I)$ where $i_j$'s are in $I$. Then we shall call $\tilde{v}=(\tilde{1}+\tilde{i_1},\tilde{i_2},\cdot\cdot\cdot,\tilde{i_n})\in \Um_n(R\oplus I,0\oplus I)$ for $\tilde{1}=(1,0),~\tilde{i_j}=(0,i_j)$ to be a lift of $v$. Clearly, $g$ sends $\tilde{v}$ to $v$.

\begin{lemma}\label{lemma:lemma2.2'}(\cite[Lemma~4.3]{gupta2014nice})
	Let $(R,m)$ be a local ring with maximal ideal $m$. Then the excision ring $R\oplus I$ with respect to a proper ideal $I$ in $R$ is also a local ring with maximal ideal $m\oplus I$.
	
\end{lemma}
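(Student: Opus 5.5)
The plan is to show directly that $m\oplus I$ is a proper ideal of $R\oplus I$ and that every element outside it is a unit. These two facts together say that $R\oplus I$ is local with maximal ideal $m\oplus I$. Throughout I use the multiplication $(a,i)(b,j)=(ab,aj+ib+ij)$ and the identity $\tilde 1=(1,0)$ from Proposition~\ref{prop:proposition2.1}.

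\emph{Step 1: $m\oplus I$ is a maximal ideal.} Consider the map $\pi:R\oplus I\to R/m$ given by $(x,i)\mapsto \bar x$. It is additive and sends $\tilde 1$ to $\bar 1$. It is multiplicative because the first coordinate of $(a,i)(b,j)$ is $ab$. It is surjective, since $(x,0)\mapsto \bar x$. Its kernel is exactly $m\oplus I$. Hence $(R\oplus I)/(m\oplus I)\cong R/m$ is a field, so $m\oplus I$ is a maximal (in particular proper) ideal.

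\emph{Step 2: every element outside $m\oplus I$ is a unit.} Let $(a,i)\notin m\oplus I$. Then $a\notin m$, so $a$ is a unit in the local ring $R$. Since $I$ is a proper ideal of the local ring $R$, we have $I\subseteq m$, so $a+i\notin m$ and $a+i$ is also a unit. We look for an inverse of the form $(a^{-1},j)$ with $j\in I$. The condition $(a,i)(a^{-1},j)=(1,0)$ reduces to
\[
aj+ia^{-1}+ij=0, \qquad\text{i.e.}\qquad j(a+i)=-ia^{-1}.
\]
So we take $j=-ia^{-1}(a+i)^{-1}$. This lies in $I$ because $i\in I$. Since the ring is commutative, $(a^{-1},j)$ is a two-sided inverse of $(a,i)$.

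\emph{Step 3: conclusion.} A commutative ring in which a proper ideal contains every non-unit is local with that ideal as its unique maximal ideal. Steps 1 and 2 give exactly this for $m\oplus I$. There is no real obstacle here. The only point needing care is where properness of $I$ enters: it gives $I\subseteq m$, which is what makes $a+i$ invertible in Step 2.
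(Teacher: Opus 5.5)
Your proof is correct and complete. The surjection $R\oplus I\to R/m$, $(x,i)\mapsto \bar x$, has kernel $m\oplus I$. Any $(a,i)$ with $a\notin m$ is a unit with inverse $(a^{-1},\,-ia^{-1}(a+i)^{-1})$, because $I\subseteq m$ makes $a+i$ a unit; this is exactly where properness of $I$ is used.

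The paper gives no argument of its own here, only the citation \cite[Lemma~4.3]{gupta2014nice}, so your direct verification makes that step self-contained. If you also want $R\oplus I$ to be noetherian, as the paper's standing conventions and its use in Theorem~\ref{thm:theorem4.4} require, note that it is a finitely generated module over $R$ via $r\mapsto (r,0)$.
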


\begin{lemma}\label{lemma:lemma2.2}(\cite[Lemma 5.5]{suslin1976serre})
	For any natural number $n\geq 2$ and any alternating matrix $\psi$ from $\GL_{2n}(R)$, we have
	\begin{ceqn}
	\begin{align*}
			e_1(\E_{2n}(R))=e_1(\E_{2n}(R)\cap \Sp_{\psi}(R)),
	\end{align*}
 	\end{ceqn}
	where
	\begin{ceqn}
	\begin{align*}
	  \Sp_{\psi}(R)=\{\alpha\in \SL_{2n}(R):\alpha^t\psi \alpha=\psi\}.
		\end{align*}
		\end{ceqn}
\end{lemma}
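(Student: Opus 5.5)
The statement to prove is Lemma~\ref{lemma:lemma2.2}: for $n\geq 2$ and $\psi$ an invertible alternating $2n\times 2n$ matrix, every elementary transform of $e_1$ is also a transform of $e_1$ by a matrix in $\E_{2n}(R)\cap\Sp_\psi(R)$. The inclusion $\supseteq$ is trivial, so the plan concerns the inclusion $\subseteq$.

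\textbf{Step 1: rephrase as transitivity.} Write $G=\E_{2n}(R)\cap \Sp_\psi(R)$. It suffices to show that if $v=e_1\varepsilon$ with $\varepsilon\in\E_{2n}(R)$, then $v\in e_1G$. Since $\varepsilon$ is a product of elementary matrices, I would argue by induction on the word length. So assume $v=e_1g$ with $g\in G$, and let $e=e_{ij}(\lambda)$. We must show $ve\in e_1G$.

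\textbf{Step 2: reduce to orbit statements.} We have $ve=e_1g e = e_1 (geg^{-1}) g$. Hence it is enough to show that $e_1 h\in e_1 G$ for every $h$ in the normal closure of elementary matrices, i.e.\ for $h\in\E_{2n}(R)$. This is circular, so a better route is to prove the stronger claim
\[
vE \subseteq vG \quad\text{for every } v\in e_1G,
\]
which reduces to showing that $v\cdot e_{ij}(\lambda)\in vG$ for all $v\in e_1G$.

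\textbf{Step 3: construct transvections.} For a row $u$, the map $x\mapsto u$-shifted symplectic transvection has the form
\[
\tau(x)=x+\langle x,w\rangle_\psi\, u+\langle x,u\rangle_\psi\, w+c\,\langle x,u\rangle_\psi\, u,
\]
with $\langle x,y\rangle_\psi=x\psi y^T$ and $w$ chosen $\psi$-orthogonal to $u$. This map preserves $\psi$. Its effect on $v$ is to add $\langle v,w\rangle_\psi u$ and related terms. The key point is that the transvections with $\langle u,v\rangle_\psi=0$ move $v$ to $v+\mu u$ for any $\mu$. So any $v'=v+a$ with $a\in R^{2n}$ lying in the span of vectors $\psi$-orthogonal to $v$ is reachable. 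This is the classical Suslin--Vaserstein idea.

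\textbf{Step 4: show these transvections lie in $\E_{2n}(R)$.} This is the main obstacle, and I would follow Suslin's argument that transvections $\tau_{u,w}$ with $u$ unimodular (or with $w$ of suitable form) are elementary when $2n\geq 4$. Concretely, I would use Suslin's lemma that $I+u^Tw\in\E_{2n}(R)$ whenever $u\,w^T=0$ and $u$ is unimodular, for $2n\geq 3$. Here $u$ is either $v$ itself or a standard basis vector.

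\textbf{Step 5: finish the orbit argument.} Since $v$ is unimodular, $v\psi$ is unimodular. The transformation $v\mapsto v\,e_{ij}(\lambda)=v+\lambda v_i e_j$ can be written as a composite of moves $v\mapsto v+\mu u$, where each $u$ satisfies $\langle v,u\rangle_\psi=0$. To see this, decompose $\lambda v_ie_j$ using a vector $z$ with $\langle v,z\rangle_\psi=1$: write any row $a=(a-\langle a,z\rangle_\psi' v)+\ldots$, and handle the $v$-component through a transvection in the direction $v$ itself, namely $x\mapsto x+c\langle x,z'\rangle_\psi v$.

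Each of these moves is realised by an element of $G$ by Steps 3 and 4. Hence $ve_{ij}(\lambda)\in vG$, and the induction closes.
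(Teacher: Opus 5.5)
The paper gives no proof of this lemma; it is quoted from Suslin--Vaserstein. So your outline has to stand on its own, and it does not.

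Steps 1--2 are fine, and the mechanism of Steps 3--4 is sound as far as it goes. When $\langle u,w\rangle_\psi=0$, the map $\tau$ is a $\psi$-isometry. For unimodular $u$ its matrix factors as $\bigl(I+\psi u^T(w+cu)\bigr)\bigl(I+\psi w^T u\bigr)$, and each factor is elementary by Suslin's lemma. (With the convention $\alpha^t\psi\alpha=\psi$, right multiplication on rows preserves $x\psi^{-1}y^T$ rather than $x\psi y^T$; replacing $\psi$ by $\psi^{-1}$ repairs this harmlessly.)

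However, the reachability claim in Step 3 is overstated. If $\langle v,u\rangle_\psi=0$, then $\tau(v)=v+\langle v,w\rangle_\psi\,u$. So you need some $w$ with $\langle u,w\rangle_\psi=0$ and $\langle v,w\rangle_\psi=\mu$, which requires $u,v$ to form a unimodular pair. Both of your proposed directions fail this. For $u=v$ every such $\tau$ fixes $v$. A standard basis vector $e_k$ is $\psi$-orthogonal to $v$ only when $(v\psi)_k=0$.

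The decisive gap is Step 5. Write $v^{\perp}=\{x:\langle v,x\rangle_\psi=0\}$. Moves $v\mapsto v+\mu u$ with $u\in v^{\perp}$ only displace $v$ inside $v^{\perp}$. Since $\psi$ is alternating, $v\in v^{\perp}$. Hence a complement of $v^{\perp}$ is $Rz$ with $\langle v,z\rangle_\psi=1$, never $Rv$, and there is no \emph{$v$-component} to treat separately.

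The displacement you need, $\lambda v_ie_j$, pairs with $v$ to $\lambda v_i(v\psi)_j$, which is nonzero in general. For example, take $\psi=\psi_2$ and $v=e_2=e_1\,se_{12}(1)\,se_{21}(-1)\in e_1G$. Then $v\,e_{21}(\lambda)=v+\lambda e_1$, and $\langle v,\lambda e_1\rangle_\psi=-\lambda$.

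Your repair $x\mapsto x+c\langle x,z'\rangle_\psi v$ is not a $\psi$-isometry unless $z'$ is essentially a multiple of $v$. In any case it only rescales $v$.

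To leave $v+v^{\perp}$ you must use transvections whose direction pairs non-trivially with $v$. For instance, take $\tau$ with $\langle v,u\rangle_\psi=1$, $w=a$ and $c=-\langle v,a\rangle_\psi$; it sends $v$ to $v+a$, but only if $\langle u,a\rangle_\psi=0$. For $a=\lambda v_ie_j$, the natural choice $\langle u,e_j\rangle_\psi=0$ is possible only when $v$ with its $j$-th entry deleted is unimodular, which need not hold.

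If instead your moves are meant relative to the current row at each stage, then building such a chain of transvections is exactly what has to be proved. That means doing it for an arbitrary $v$ in the orbit, using unimodularity of $v$ and $2n\ge 4$. This is precisely the content of the cited lemma, and your outline does not supply it.
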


The following is proved in \cite[Theorem~3.9]{gupta2015optimal}.
\begin{lemma}\label{lemma:lemma2.3'}
	Let $v\in\Um_{2n}(R)$. Then $v\E_{2n}(R)=v\ESp_{2n}(R)$.

\end{lemma}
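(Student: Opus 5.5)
Since $\ESp_{2n}(R)\subseteq \E_{2n}(R)$, the inclusion $v\ESp_{2n}(R)\subseteq v\E_{2n}(R)$ is immediate. For $n=1$ there is nothing more to prove, because $se_{12}(z)=e_{12}(z)$ and $se_{21}(z)=e_{21}(z)$ give $\ESp_2(R)=\E_2(R)$. So assume $n\geq 2$.

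For the reverse inclusion it suffices to prove a one-step claim. For every $w\in\Um_{2n}(R)$ and every generator $e_{ij}(\lambda)$ there should exist $s\in\ESp_{2n}(R)$ with $w\,e_{ij}(\lambda)=w\,s$. Granting this, induction on the length of a word $\varepsilon_1\cdots\varepsilon_k\in\E_{2n}(R)$ finishes. Indeed, $v\varepsilon_1\cdots\varepsilon_{k-1}=vs$ with $s\in \ESp_{2n}(R)$, and applying the claim to $w=vs$ gives $vs\varepsilon_k=vss'$. If $i=\sigma(j)$, the claim is trivial since $e_{ij}(\lambda)=se_{ij}(\lambda)$. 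So the content is the case $i\neq\sigma(j)$, where
\[
w\,e_{ij}(\lambda)=w+\lambda w_i e_j .
\]
We must realise this translation of $w$ by the vector $z=\lambda w_ie_j$ symplectically.

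The tool I would use is Eichler--Siegel--Dickson transvections. Since $\psi_n$ is invertible and $w$ is unimodular, pick a row $u$ with $w\psi_n u^T=1$. For a row $z$, consider the map
\[
x\mapsto x+(x\psi_n u^T)z-(x\psi_n z^T)u+c\,(x\psi_n u^T)u ,
\]
with $c$ chosen in terms of $z\psi_n u^T$ and $z$ adjusted by a multiple of $u$ as needed. This map is symplectic and sends $w$ to $w+z$ modulo a multiple of $u$, which one corrects by a further transvection in the direction of $u$. Concretely, I would first use the symplectic transvection $x\mapsto x+a(x\psi_n w^T)w$ and its analogues to reduce to the case $z\psi_n w^T=0$. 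One checks that $(\lambda w_ie_j)\psi_n w^T=\pm\lambda w_iw_{\sigma(j)}$. This can be absorbed by first applying $se_{ij}(\lambda)$, whose effect differs from $e_{ij}(\lambda)$ only in the $\sigma(i)$-coordinate, and then compensating that coordinate by a transvection in direction $u$. This exhibits $w\,e_{ij}(\lambda)=w\,\tau$ with $\tau$ a product of $se_{ij}(\lambda)$ and transvections of the form $T(u,z)$ with $u$ unimodular.

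The main obstacle is showing that these transvections $T(u,z)$, attached to a unimodular $u$ and a row $z$ with $u\psi_n z^T=0$, actually lie in $\ESp_{2n}(R)$, and not merely in $\Sp_{2n}(R)$. For this I would follow the Suslin--Kopeiko local--global strategy.

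1. Pass to $R[T]$ and consider $\theta(T)=T(u,Tz)$, so that $\theta(0)=I$.
2. For each maximal ideal $\mathfrak m$, over $R_{\mathfrak m}$ the unimodular vector $u$ is moved to $e_1$ by $\ESp_{2n}(R_{\mathfrak m})$, since a local ring has stable rank $1$ and one can argue directly.
3. After this change of coordinates, $T(e_1,z')$ is visibly a product of $se_{1k}$ generators, so $\theta(T)_{\mathfrak m}\in\ESp_{2n}(R_{\mathfrak m}[T])$.
4. Invoke the symplectic local--global principle (Kopeiko), valid for $n\geq 2$ and using normality of $\ESp_{2n}$, to conclude $\theta(T)\in\ESp_{2n}(R[T])$.
5. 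Specialize at $T=1$.

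If proving the local--global principle from scratch is too heavy, I would cite it together with Kopeiko's normality theorem as the key input, since the remaining steps are explicit matrix bookkeeping.
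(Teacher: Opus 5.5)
The paper gives no argument for this lemma; it quotes \cite[Theorem~3.9]{gupta2015optimal}. There is a genuine gap in your step (A).

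\textbf{What is fine.} Your outer reductions are correct: the inclusion, the case $n=1$, the reduction to a single generator, and $w\,e_{ij}(\lambda)=w+\lambda w_ie_j$. Step (B) is also fine: Eichler--Siegel--Dickson transvections attached to a unimodular vector do lie in $\ESp_{2n}(R)$ for $n\geq 2$ (Kopeiko), and your local--global sketch is the standard one. One correction: with $B(x,y)=x\psi_n y^T$, the symplectic map is $x\mapsto x+B(x,u)y+B(x,y)u+cB(x,u)u$ with $B(u,y)=0$. With your minus sign the map is not symplectic.

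\textbf{The gap.} You treat step (A) as bookkeeping, but it is the whole content of the lemma. Every generator $se_{ij}(z)$ is itself such a transvection; for $i\neq\sigma(j)$ it is $T(\pm e_{\sigma(i)},ze_j)$. So, granting (B), $\ESp_{2n}(R)$ is exactly the group these transvections generate. Finding $\tau$ with $w\,e_{ij}(\lambda)=w\tau$ is therefore literally the statement to be proved.

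Your construction does not produce such a $\tau$:
\begin{itemize}
\item If $B(w,u)=1$, then $\{w\,T(u,y,c)\}$ is precisely $w+u^{\perp}$, where $u^{\perp}=\{y:B(u,y)=0\}$. So one transvection reaches $w+z$ iff $B(u,z)=0$.
\item Adding multiples of $u$ to $z$ never changes $B(u,z)$.
\item For $z=\lambda w_ie_j$ one gets $B(u,z)=\pm\lambda w_iu_{\sigma(j)}$. You would need $u$ with $B(w,u)=1$ and $\lambda w_iu_{\sigma(j)}=0$. If $\lambda w_i$ is a nonzerodivisor, this forces $(w_k)_{k\neq j}$ to be unimodular, which need not hold.
\end{itemize}

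The proposed repairs do not help either:
\begin{itemize}
\item The map $x\mapsto x+aB(x,w)w$ fixes $w$ and leaves $B(z,w)$ unchanged. In any case, $B(z,w)=0$ is not the relevant condition.
\item After applying $se_{ij}(\lambda)$, the remaining correction is $w'\mapsto w'\,e_{\sigma(j)\sigma(i)}(\pm\lambda)$ with $w'=w\,se_{ij}(\lambda)$. That is a problem of exactly the same kind.
\item A transvection along $u$ only adds multiples of $u$, not of $e_{\sigma(i)}$.
\end{itemize}

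\textbf{How to close it.} Apply the local--global principle to the \emph{orbit}, not to a globally constructed matrix. Put $\epsilon(X)=e_{ij}(\lambda X)$.

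First, $e_1\E_{2n}(A)=e_1\ESp_{2n}(A)$ for every commutative ring $A$. To see this, apply Lemma~\ref{lemma:lemma2.4} to $\epsilon^{-1}$: this gives $(1\perp\rho)^{-1}\epsilon\in\ESp_{2n}(A)$, which has the same first row as $\epsilon$.

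Now fix a maximal ideal $\mathfrak m$ of $R$. Since $R_{\mathfrak m}$ is local, $v=e_1\beta$ with $\beta\in\ESp_{2n}(R_{\mathfrak m})$. Then $v\epsilon(X)=e_1\beta\epsilon(X)=e_1\gamma(X)=v\beta^{-1}\gamma(X)$ with $\gamma(X)\in\ESp_{2n}(R_{\mathfrak m}[X])$. So $v\epsilon(X)\in v\ESp_{2n}(R_{\mathfrak m}[X])$ for every $\mathfrak m$.

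Next, use the local--global principle for $\ESp_{2n}$-orbits of unimodular rows over $R[X]$. It follows from Quillen patching together with the dilation property coming from Kopeiko's normality of $\ESp_{2n}$, $n\geq 2$. It gives $v\epsilon(X)\in v\ESp_{2n}(R[X])$, and setting $X=1$ finishes. This is essentially the Chattopadhyay--Rao argument behind the cited result, and it makes your step (B) unnecessary.
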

 \begin{lemma}\label{lemma:lemma2.4}(\cite[Lemma~2.14]{chattopadhyay2011elementary})
 	Let $R$ be a commutative ring and let $\epsilon \in \E_{2n}(R),~n\geq 2$. Then there exists $\rho \in \E_{2n-1}(R)$ such that $\epsilon(1\perp \rho)\in \ESp_{2n}(R)$.
 \end{lemma}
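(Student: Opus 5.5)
The plan is to reduce the statement about $\E_{2n}(R)$ to a statement about $\ESp_{2n}(R)$ by peeling off generators one at a time, working in the symplectic form $\psi_n$ (so $\Sp_{2n}(R)$ as in Definition~\ref{def:definition2.3}). Since $\ESp_{2n}(R)\subset \E_{2n}(R)$, only one inclusion has content. Given $\epsilon\in\E_{2n}(R)$ with $n\ge 2$, we must find $\rho\in\E_{2n-1}(R)$, acting on the last $2n-1$ coordinates, such that $\epsilon(1\perp\rho)\in\ESp_{2n}(R)$. Equivalently, we want $\E_{2n}(R)\subseteq \ESp_{2n}(R)\cdot(1\perp\E_{2n-1}(R))$.

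Write $H=1\perp \E_{2n-1}(R)$. It suffices to show that the set $\ESp_{2n}(R)\,H$ is stable under left multiplication by every elementary generator $e_{ij}(\lambda)$. Then induction on word length, starting from the identity, gives the claim.

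\textbf{Generators with $i\neq 1$.} The first row of $e_{ij}(\lambda)\,\sigma$ equals the first row of $\sigma$. The strategy is to fix first rows, since two matrices with the same first row differ on the right by an element stabilizing $e_1$.

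\textbf{Generators with $i=1$.} Here we can write $e_{1j}(\lambda)=se_{1j}(\lambda)\cdot e'$, where $e'$ is an elementary matrix with row index $\neq 1$. For $j=\sigma(1)=2$ we have $e'=1$; otherwise $e'=e_{\sigma(j)\sigma(1)}(\pm\lambda)$, which fixes row $1$.

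\textbf{Reduction to a first-row statement.} It is cleaner to prove the equivalent statement: for every $\epsilon\in\E_{2n}(R)$ there is $\gamma\in\ESp_{2n}(R)$ with $e_1\epsilon=e_1\gamma$ and $\gamma^{-1}\epsilon\in H$. The first condition follows from Lemma~\ref{lemma:lemma2.3'}, or from Lemma~\ref{lemma:lemma2.2} with $\psi=\psi_n$. The real issue is the second condition: the stabilizer of $e_1$ in $\E_{2n}(R)$ consists of matrices $\begin{pmatrix}1&0\\ *&\beta\end{pmatrix}$, and we need to absorb the column $*$ and arrange that $\beta$ lies in $\E_{2n-1}(R)$, not merely in $\SL_{2n-1}(R)\cap\E_{2n}(R)$.

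\textbf{Main obstacle and how to handle it.} I would argue by induction on a word for $\epsilon$, maintaining the invariant $\epsilon=\gamma h$ with $\gamma\in\ESp_{2n}(R)$ and $h\in H$. Multiply on the left by a generator $g=e_{ij}(\lambda)$. We need $g\gamma=\gamma' h'$, that is, $\gamma^{-1}g\gamma\in\ESp_{2n}(R)\,H$. This is a conjugate of an elementary matrix, which is delicate. The alternative is to use right multiplication: write $\epsilon=\epsilon' g$, so $\epsilon=\gamma h g$.

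\emph{Case $j\ge 2$ and $i\ge 2$.} Then $g\in H$ and we are done.

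\emph{Otherwise.} Write $hg=g''h''$ using Steinberg-type commutator relations. Since $H$ normalizes the subgroup of matrices that are elementary in column $1$, or respectively in row $1$, moving $g$ to the left past $h$ produces a product of $e_{k1}(\mu)$'s, respectively $e_{1k}(\mu)$'s. This is the standard parabolic decomposition $\E_{2n}=\langle U^-,U^+\rangle\cdot H$.

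\emph{Absorbing row-$1$ and column-$1$ unipotents.} Each $e_{1k}(\mu)$ equals $se_{1k}(\mu)$ times $e_{\sigma(k)2}(\pm\mu)\in H$ (up to order), and similarly $e_{k1}(\mu)=se_{k1}(\mu)\cdot e_{2\sigma(k)}(\pm\mu)$. The correction terms lie in $H$ because they avoid index $1$; here we use $\sigma(1)=2$. The remaining care is to check that the symplectic factor can be moved left past $h''$ without leaving $\ESp_{2n}(R)H$, by commuting these corrections through the product. That commutator bookkeeping is the step I expect to be most technical, and I would organize it by the two Levi-type unipotent radicals of the parabolic fixing $e_1$.
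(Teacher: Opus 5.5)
The paper gives no proof of this lemma; it imports it from Chattopadhyay--Rao, so your argument has to stand on its own.

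\textbf{What works.} Your final architecture is sound. Put $H=1\perp\E_{2n-1}(R)$, $U^+=\{I+\sum_{k\ge 2}x_kE_{1k}\}$, $U^-=(U^+)^t$ and $S=\ESp_{2n}(R)H$. Then $SH=S$, $H$ normalizes $U^{\pm}$, and $(sh)u=s\,(huh^{-1})\,h$. Since $H\cup U^+\cup U^-$ generates $\E_{2n}(R)$, right-multiplication induction gives $\E_{2n}(R)=S$, \emph{provided} $U^+\cup U^-\subseteq S$.

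\textbf{The gap.} That inclusion is the whole content of the lemma, and it is exactly the step you defer as ``commutator bookkeeping.'' Your recipe is to write $e_{1k}(x)=se_{1k}(x)\,e_{\sigma(k)2}(\pm x)$ and then commute the $H$-corrections through the product. That is not self-justifying. Moving an element of $H$ to the right past an element of $\ESp_{2n}(R)$ is, in general, the statement $H\,\ESp_{2n}(R)\subseteq \ESp_{2n}(R)\,H$, i.e.\ the lemma itself. Separately, drop your ``first-row'' reformulation. Right multiplication by $1\perp\rho$ preserves the first \emph{column} of $\epsilon$, not its first row. So requiring $e_1\epsilon=e_1\gamma$ is an extra condition, not an equivalent form of the lemma.

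\textbf{How to close it.} The inclusion does hold, because everything stays inside a small Heisenberg-type group. Let $N^+$ be the set of matrices $I+X+Y$ with $X\in\sum_{k\ge2}RE_{1k}$ and $Y\in\sum_{a\ge3}RE_{a2}$. For these indices $E_{1k}E_{1l}=E_{a2}E_{1k}=E_{a2}E_{b2}=0$, so
\[
(I+X+Y)(I+X'+Y')=I+(X+X'+XY')+(Y+Y'),\qquad XY'\in RE_{12}.
\]
Hence $N^+$ is a group, and $\pi\colon I+X+Y\mapsto I+Y$ is a homomorphism onto $C^+=\{I+Y\}\subseteq H$ with kernel $U^+$. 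Every $se_{1k}(x)$ with $k\ge2$ lies in $N^+$.

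Now take $u=I+\sum_{k\ge2}x_kE_{1k}$ and put $m=\prod_{k=3}^{2n}se_{1k}(x_k)$. Then $m\,\pi(m)^{-1}\in U^+$ has $E_{1k}$-entry $x_k$ for $k\ge3$ and some $E_{12}$-entry $d$. Hence $u=se_{12}(x_2-d)\,m\,\pi(m)^{-1}\in\ESp_{2n}(R)\,H$.

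For $U^-$ nothing new is needed. First, $U^+=(U^+)^{-1}\subseteq H\,\ESp_{2n}(R)$. Transposing, and using $se_{ij}(z)^t=se_{ji}(z)$ and $H^t=H$, gives $U^-\subseteq \ESp_{2n}(R)\,H$.

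With these few lines in place of the deferred bookkeeping, your induction is a complete proof.
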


Next we note the results which will be useful for the proof of the main results over the polynomial ring.

\begin{lemma}\label{lemma:lemma2.5}(\cite[Corollary 2.5]{rao1988bass})
	Let $R$ be a ring of dimension $d$. If $\frac{1}{d!}\in R$, then every $v\in \Um_{d+1}(R[X])$ is extended from $R$, i.e. $v\sim_\SL v(0)$.
\end{lemma}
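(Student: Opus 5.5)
The plan is to combine Rao's local result (Theorem~\ref{thm:rao1}) with a Quillen-type local--global principle for the action of $\SL_{d+1}(R[X])$ on $\Um_{d+1}(R[X])$. Fix $v\in\Um_{d+1}(R[X])$. The first step is to show that for every maximal ideal $\mathfrak m$ of $R$ there is $\sigma_{\mathfrak m}\in\SL_{d+1}(R_{\mathfrak m}[X])$ with $\sigma_{\mathfrak m}(0)=I_{d+1}$ and $v=v(0)\sigma_{\mathfrak m}$ over $R_{\mathfrak m}[X]$. The second step is to patch these local matrices into a global one.

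\emph{Local step.} The ring $R_{\mathfrak m}$ is local of dimension at most $d$, and $\frac{1}{d!}\in R_{\mathfrak m}$. If $\dim R_{\mathfrak m}=d\geq 2$, Theorem~\ref{thm:rao1} applies directly and gives $v=e_1\alpha$ for some $\alpha\in\SL_{d+1}(R_{\mathfrak m}[X])$. If $\dim R_{\mathfrak m}<d$, the row has length at least $\dim R_{\mathfrak m}+2$, so $\E_{d+1}(R_{\mathfrak m}[X])$ already acts transitively by the stable range result of Bass quoted in the introduction, applied to $R_{\mathfrak m}[X]$. The degenerate small cases $d\le 1$ are handled the same way, or because $R_{\mathfrak m}[X]$ is then a ring over which unimodular rows of length $d+1\geq 2$ are completable by elementary means.

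In every case we obtain $v=e_1\alpha$ with $\alpha\in\SL_{d+1}(R_{\mathfrak m}[X])$. Setting $X=0$ gives $v(0)=e_1\alpha(0)$, hence
\[
v=v(0)\,\alpha(0)^{-1}\alpha .
\]
Put $\sigma_{\mathfrak m}=\alpha(0)^{-1}\alpha$. Then $\sigma_{\mathfrak m}\in\SL_{d+1}(R_{\mathfrak m}[X])$ and $\sigma_{\mathfrak m}\equiv I_{d+1}\pmod X$, as required.

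\emph{Patching step.} Define
\[
J=\{a\in R:\ v=v(0)\sigma \text{ over } R_a[X] \text{ for some } \sigma\in\SL_{d+1}(R_a[X]) \text{ with } \sigma(0)=I\}.
\]
By the local step, every maximal ideal contains an element $s\notin\mathfrak m$ lying in $J$: the matrix $\sigma_{\mathfrak m}$ involves only finitely many denominators, so it descends to some $R_s[X]$. It therefore suffices to show that $J$ is an ideal, since then $J=R$, $1\in J$, and $v\sim_{\SL}v(0)$.

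To show $J$ is an ideal, I would run Quillen's argument. Take $a,b\in J$ and consider over $R_{ab}[X]$ the matrix $\tau(X)=\sigma_a(X)\sigma_b(X)^{-1}$, which fixes $v(0)$ from the right. For a suitable $k$ one splits
\[
\tau(X)=\tau(X)\tau(X+a^k Y)^{-1}\cdot\tau(X+a^kY),
\]
and uses the standard lemma that a matrix $\gamma(X,Y)$ with $\gamma(X,0)=I$ over $R_{ab}[X,Y]$ descends to $R_b[X,Y]$ after replacing $Y$ by $b^NY$. With an additional multiplicative argument to keep everything in the stabilizer of $v(0)$, this glues $\sigma_a$ and $\sigma_b$ to show $\lambda a+\mu b\in J$.

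The main obstacle is exactly this last point: the naive dilation trick produces matrices that do not obviously fix $v(0)$, and without that the patched matrix need not carry $v(0)$ to $v$. My first approach would be to avoid the stabilizer issue by passing to the stably free module $P=\ker(v)$. Completability over each $R_{\mathfrak m}[X]$ shows that $P_{\mathfrak m}$ is extended from $R_{\mathfrak m}$. Quillen patching for projective modules then gives $P\cong P(0)\otimes R[X]$. Splitting $R[X]^{d+1}=P\oplus R[X]$, this isomorphism extends to an automorphism of $R[X]^{d+1}$ carrying $v(0)$ to $v$. Finally, I would correct its determinant, a unit which can be normalized to equal $1$ at $X=0$, by composing with a suitable automorphism of $P(0)\otimes R[X]$; I expect this determinant correction to need the most care.
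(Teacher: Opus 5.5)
Your overall strategy (Rao's local theorem followed by Quillen's local--global principle) is the standard way to derive this corollary, which the paper only quotes from Rao. However, two steps do not go through as written.

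First, the local step fails when $\dim R_{\mathfrak m}=d-1$. Bass's theorem for $R_{\mathfrak m}[X]$, a ring of dimension $\dim R_{\mathfrak m}+1$, requires rows of length at least $\dim R_{\mathfrak m}+3$, i.e.\ $d+2$. Your rows have length $d+1$, so Bass only covers $\dim R_{\mathfrak m}\le d-2$. There are two ways to fix this:
\begin{itemize}
\item invoke Suslin's theorem that $\E_n(A[X])$ acts transitively on $\Um_n(A[X])$ for $n\ge\max(3,\dim A+2)$; or
\item reduce to dimension exactly $d$. With $r=d-\dim R_{\mathfrak m}$, the ring $S=R_{\mathfrak m}[T_1,\dots,T_r]_{(\mathfrak m,T_1,\dots,T_r)}$ is local of dimension $d$ with $\frac{1}{d!}\in S$. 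Theorem~\ref{thm:rao1} completes $v$ over $S[X]$, and the retraction $S\to R_{\mathfrak m}$, $T_i\mapsto 0$, carries that completion back to $R_{\mathfrak m}[X]$.
\end{itemize}
Note also that for $d=0$ the statement is false over non-reduced rings, e.g.\ $v=1+\varepsilon X$ over $K[\varepsilon]/(\varepsilon^2)$, so $d\ge 1$ must be assumed.

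Second, you leave the patching unfinished, and the obstacle you single out is not real. Since $v(0)$ is a constant row, $v(0)\tau(X)=v(0)$ is an identity in $R_{ab}[X]$ that survives every substitution for $X$. Hence $\tau(X+a^kY)$, $\tau(ca^kX)$ and their products all fix $v(0)$. The only care needed is to kill $a$- or $b$-torsion when lifting from $R_{ab}$, and a larger dilation exponent does that.

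Simpler still is the action form of the principle, which never mentions stabilizers. From $v=v(0)\sigma$ over $R_a[X]$ one gets $v(X+Y)=v(X)\,\sigma(X)^{-1}\sigma(X+Y)$. After $Y\mapsto a^NY$ this gives a matrix $\tau_a\in\SL_{d+1}(R[X,Y])$ with $v(X+a^NY)=v(X)\tau_a(X,Y)$. Doing the same for $b$ and writing $ca^N+db^N=1$ yields $v(X)=v(0)\,\tau_b(0,dX)\,\tau_a(db^NX,cX)$, with determinant $1$ throughout.

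Your module-theoretic route also works, but its decisive step is exactly the one you leave open. After normalizing, $u=\det\bigl(\alpha(0)^{-1}\alpha\bigr)$ is a unit of $R[X]$ with $u(0)=1$, hence $u\in 1+X\,\mathrm{nil}(R)[X]$. If $R$ is reduced, then $u=1$. In general $u-1$ is nilpotent and $\frac1d\in R$, so $u$ has a $d$-th root. The automorphism acting as $u^{-1/d}$ on $\ker v(0)\otimes R[X]$ and as the identity on a complement $R[X]s$ with $v(0)s=1$ stabilizes $v(0)$ and has determinant $u^{-1}$. Composing with it repairs the determinant.
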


\begin{lemma}\label{lemma:lemma2.6}(\cite[Corollary 3.3]{rao1991completing})
	Let $R$ be a ring of dimension $3$ with $\frac{1}{3!}\in R$. Then any stably extended projective module over $R[X_1,\cdot\cdot\cdot,X_n]$ is extended from $R$.
\end{lemma}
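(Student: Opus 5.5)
The plan is to use Quillen's local–global principle to reduce to the case where $R$ is local, and then handle that case by rank.

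\textbf{Reduction.} Let $A=R[X_1,\dots,X_n]$ and let $P$ be a finitely generated projective $A$-module with $P\oplus A^m\simeq P_0\otimes_R A\oplus A^m$, where $P_0=P/(X_1,\dots,X_n)P$. By Quillen patching, applied one variable at a time, $P$ is extended from $R$ as soon as $P_{\mathfrak m}$ is extended from $R_{\mathfrak m}$ for every maximal ideal $\mathfrak m$ of $R$. Localizing keeps $\dim\le 3$ and $\frac{1}{3!}\in R$, so we may assume $R$ is local. Projective modules over a local ring are free, so $P_0$ is free and $P$ is stably free. It remains to show that every stably free $A$-module over a local ring of dimension $\le 3$ is free. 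By induction on $m$, it suffices to show that every unimodular row of length $r+1$ over $A$ is completable, where $r=\operatorname{rank}P$.

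\textbf{Large and trivial ranks.} If $r\ge 4=d+1$, I would invoke the Bhatwadekar--Rao/Plumstead cancellation for polynomial extensions of rings of dimension $d$ with rank $\ge\max(2,d+1)$. Equivalently, one can use Suslin's result that $\E_{r+1}(A)$ acts transitively on $\Um_{r+1}(A)$ for $r+1\ge d+2$ when $R$ is local, which follows from Suslin's monic inversion and the local–global principle for elementary action. Rank $1$ is trivial, since a stably free module of rank $1$ is isomorphic to its determinant, which is free.

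\textbf{Rank $3$.} Here $v\in\Um_4(A)$ has length $d+1$. I would generalize Theorem~\ref{thm:rao1} to several variables by induction on $n$, as follows.
\begin{enumerate}
\item After a change of variables, make one coordinate of $v$ monic in $X_n$ over $B=R[X_1,\dots,X_{n-1}]$.
\item Use Rao's argument, which works in the Mennicke-symbol-type group $\Um_4/\E_4$ over $B[X_n]/(\text{monic})$, a ring of dimension $\le\dim B$, together with $\frac{1}{3!}\in R$, to elementarily transform $v$ into a factorial row $(w_0^{3!},w_1,w_2,w_3)$.
\item Conclude that this row is completable by Suslin's $n!$ theorem.
\end{enumerate}
For the dimension bookkeeping I would use Lemma~\ref{lemma:lemma2.5} to pass from $B[X_n]$ back to rows extended from $B$, and then apply the inductive hypothesis over $B$.

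\textbf{Rank $2$ (expected main obstacle).} Here $v\in\Um_3(A)$, whose length is far below $d+1$, so no stable-range argument applies. I would first try to use the Vaserstein symbol and Lemma~\ref{lemma:gsharma2.1}, together with Lemma~\ref{lemma:lemma2.1'} for the $\pm v_0$ symmetry, which $\frac12\in R$ should supply. Combined with Rao's local–global reduction to a monic coordinate, the aim is to show that $v$ is elementarily equivalent to a row of the form $(w_0^{2},w_1,w_2)$. Such a row is completable by Suslin's $n!$ theorem, since the exponents $0!\cdot1!\cdot2!$ multiply to $2$. If this route fails, the fallback is to use the rank-$3$ case together with a Roitman-type argument, showing $P\oplus A$ free implies $P$ free via the local structure of $R$. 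This step is where I expect the real difficulty of Rao's Corollary~3.3 to lie.
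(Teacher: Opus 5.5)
The paper gives no proof of this lemma. It is quoted from Rao, where it is a corollary of the one-variable completion results recorded here as Theorem~\ref{thm:theorem4.1} and Theorem~\ref{thm:theorem4.2}.

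Your reduction to $R$ local is fine, but it needs the several-variable form of Quillen patching over $R$ itself. Patching one variable at a time over $R[X_1,\dots,X_{n-1}]$ would produce local rings of dimension larger than $3$. Your treatment of ranks $1$ and $\geq 4$ is also fine.

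\textbf{The gap.} Rank $2$ is not proved: you only state an aim and a fallback, and the route you sketch fails as stated.
\begin{itemize}
\item Lemma~\ref{lemma:lemma2.1'} needs $(v_0,v_1,v_2)\sim_{\E}(-v_0,v_1,v_2)$. The paper gets this from $-1$ being a square in an algebraically closed $K$; $\frac12\in R$ does not supply it.
\item Over $R[X_1,\dots,X_n]$, a ring of dimension $3+n$, the Vaserstein symbol gives no usable group law on $\Um_3/\E_3$. Injectivity and van der Kallen's structure are only available in low dimension.
\item Even with such a group law, nothing in your sketch makes $[v]$ a square. That divisibility is exactly what the paper has to assume in Theorem~\ref{thm:theorem3.5}.
\item The fallback ``$P\oplus A$ free implies $P$ free'' just restates the claim.
\end{itemize}

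Your rank-$3$ induction also fails.
\begin{itemize}
\item Over a local ring that is not a field, no change of variables makes a coordinate monic in general. Nagata's substitution gives a leading coefficient that is a coefficient of the original polynomial and may lie in $\mathfrak m$; think of $1+mX$ with $m\in\mathfrak m$.
\item If such a step were available, Suslin's monic lemma plus the local--global principle would already give $v\sim_{\E}v|_{X_n=0}$ in every rank, with no factorial row. So the whole difficulty sits in that step.
\item Lemma~\ref{lemma:lemma2.5} and van der Kallen's group law do not apply over $B=R[X_1,\dots,X_{n-1}]$, whose dimension is $n+2$.
\end{itemize}

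\textbf{The missing idea.} Reduce the number of variables rather than generalize the row results. Argue by induction on $n$, for all rings of dimension $\le 3$ containing $\frac{1}{3!}$.
\begin{itemize}
\item After patching, $R$ is local and $P$ is stably free.
\item Let $S\subset R[X_n]$ be the set of monic polynomials. Then $R\langle X_n\rangle=S^{-1}R[X_n]$ is noetherian of dimension $\dim R$, and it still contains $\frac{1}{3!}$.
\item By the induction hypothesis applied to $R\langle X_n\rangle$, $P_S$ is extended from $R\langle X_n\rangle$. The module it is extended from is $(P/(X_1,\dots,X_{n-1})P)_S$, and $P/(X_1,\dots,X_{n-1})P$ is a stably free $R[X_n]$-module, hence free by the one-variable case.
\item So $P_f$ is free for some monic $f\in R[X_n]$. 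The affine Horrocks theorem over $R[X_1,\dots,X_{n-1}]$ then shows that $P$ is extended from $R[X_1,\dots,X_{n-1}]$, where induction applies again.
\end{itemize}

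What remains is one variable over local rings of dimension $\le 3$. There the ranks are handled as follows:
\begin{itemize}
\item rank $\geq \dim R+1$: Suslin's stability theorem;
\item rank $=\dim R\in\{2,3\}$: Theorem~\ref{thm:theorem4.1};
\item rank $2$ in dimension $3$: Theorem~\ref{thm:theorem4.2}.
\end{itemize}
The last of these is precisely the deep input your proposal leaves open.
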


We note the following result of Swan and Towber in \cite[Theorem~2.1]{swan1975class}.
\begin{theorem}\label{thm:theorem3.1'}
	If $p\alpha+q\beta+r\gamma=1$, then 
	\begin{ceqn}
		\begin{align*}
			\begin{vmatrix}
				\alpha^2 & \beta & \gamma \\
				\beta+r\alpha & -r^2+pr\beta & -p+qr-pq\beta \\
				\gamma-q\alpha & p+qr+pr\gamma & -q^2-pq\gamma
			\end{vmatrix}=1.
		\end{align*}
	\end{ceqn}
	
\end{theorem}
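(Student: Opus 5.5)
The plan is to verify the identity by brute-force expansion, organised so that the relation $p\alpha+q\beta+r\gamma=1$ enters only at the very end. Write $s=p\alpha+q\beta+r\gamma$ and regard everything as taking place in the polynomial ring $\mathbb{Z}[p,q,r,\alpha,\beta,\gamma]$. The target is an explicit expression $D=F(s,\dots)$ for the determinant $D$ of the displayed matrix, and I expect the outcome to be a polynomial identity of the shape $D=s^2$. The sanity checks $p=\alpha=1$, $q=r=\beta=\gamma=0$ and $q=\beta=1$, $p=r=\alpha=\gamma=0$ both give $D=1=s^2$. Specialising to $s=1$ then gives $\det=1$ over any commutative ring.

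\textbf{Step 1: expand along the first row.} We have $D=\alpha^2C_{11}+\beta C_{12}+\gamma C_{13}$, where
\begin{align*}
C_{11}&=(-r^2+pr\beta)(-q^2-pq\gamma)-(-p+qr-pq\beta)(p+qr+pr\gamma),\\
C_{12}&=-\bigl[(\beta+r\alpha)(-q^2-pq\gamma)-(-p+qr-pq\beta)(\gamma-q\alpha)\bigr],\\
C_{13}&=(\beta+r\alpha)(p+qr+pr\gamma)-(-r^2+pr\beta)(\gamma-q\alpha).
\end{align*}

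\textbf{Step 2: simplify $C_{11}$.} In $C_{11}$ the terms $q^2r^2$ cancel against $-(qr)^2$. What remains is $p^2$ plus terms each carrying one of the factors $p\beta$ or $p\gamma$, so it should collapse to something like $p(p+\dots)$, hopefully $p\cdot s$ after suitable grouping.

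\textbf{Step 3: collect the remaining terms.} I will group $\beta C_{12}+\gamma C_{13}$ by monomials in $\alpha$, and aim to rewrite the whole expansion as $\alpha\,p\,s+\beta\,q\,s+\gamma\,r\,s$ (or an analogous expression). This would give $D=s^2$, or at least $D\equiv 1$ after using $s=1$ once or twice.

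\textbf{Step 4: fall back to substitution if needed.} If the expression does not homogenise cleanly, I will instead substitute $p\alpha=1-q\beta-r\gamma$ repeatedly. Every monomial containing $p\alpha$ is eliminated this way, and the goal is then to check that the residue is identically $1$.

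The main obstacle is purely bookkeeping in Steps 2--3: there are on the order of thirty monomials, and the cancellations only become visible after grouping by the degree in $(\beta,\gamma)$. To make this tractable I will sort terms by their $(\beta,\gamma)$-degree $0,1,2,3$ and verify each graded piece separately against the corresponding piece of $s^2$ (or of $s$, if the identity turns out to be $D=s$ modulo lower terms). Conceptually, the matrix is the Swan--Towber completion of $(\alpha^2,\beta,\gamma)$. If the direct check becomes unwieldy, an alternative is to derive it from Lemma~\ref{lemma:lemma2.1'} with $n=2$ via the Vaserstein symbol. However, I expect the elementary computation to be the intended and shortest route.
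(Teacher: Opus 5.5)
The paper gives no proof of this statement; it only cites Swan--Towber. A direct cofactor expansion in $\mathbb{Z}[p,q,r,\alpha,\beta,\gamma]$ followed by specialisation is therefore a legitimate self-contained route, and your Step 1 is set up correctly. However, the outcome you predict is false: $D$ is \emph{not} $s^2$ as a polynomial, and $C_{11}$ is not $ps$ (it contains no $\alpha$ at all). Carrying out the expansion gives
\begin{align*}
C_{11}&=p^2(1+q\beta+r\gamma),\\
C_{12}&=q^2\beta+pq\alpha-p\gamma+qr\gamma+pq^2\alpha\beta+pqr\alpha\gamma,\\
C_{13}&=p\beta+pr\alpha+qr\beta+r^2\gamma+pqr\alpha\beta+pr^2\alpha\gamma.
\end{align*}
In $\beta C_{12}+\gamma C_{13}$ the terms $\mp p\beta\gamma$ cancel. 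Writing $a=p\alpha$ and $b=q\beta+r\gamma$ (so $s=a+b$), everything collapses to
\[
D=a^2+ab+b^2+ab(a+b)=s^2+p\alpha(q\beta+r\gamma)(s-1).
\]
For example, $p=q=\alpha=\beta=1$, $r=\gamma=0$ gives $D=5$ but $s^2=4$. Both of your sanity checks have $p\alpha(q\beta+r\gamma)=0$, which is exactly where the correction term vanishes. So the comparison of $(\beta,\gamma)$-graded pieces against $s^2$ planned in Step 3 would report mismatches in degrees $1$ and $2$, namely $ab(a-1)$ and $ab^2$, which sum to $ab(s-1)$. The hypothesis $s=1$ has to be used genuinely, as in your Step 4 fallback. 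With the displayed identity, which holds in $\mathbb{Z}[p,q,r,\alpha,\beta,\gamma]$, the proof is finished: under any ring map sending $s\mapsto 1$ the last term dies and $D=1$. Equivalently, $a+b=1$ gives $D=a^2+2ab+b^2=1$.

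Drop the suggested alternative via Lemma~\ref{lemma:lemma2.1'}. That lemma is a statement about classes in $\W_E(R)$, valid only under the hypothesis $(v_0,v_1,v_2)\sim_E(-v_0,v_1,v_2)$. It produces no explicit matrix, so it cannot give this determinant identity, nor even the completability of $(\alpha^2,\beta,\gamma)$. In the proof of Theorem~\ref{thm:theorem3.5} the two results play complementary roles. The lemma rewrites $[(u_0,u_1,u_2)]^2$ as $[(u_0^2,u_1,u_2)]$, and the Swan--Towber matrix then completes $(u_0^2,u_1,u_2)$.
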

The following is the generalized result of Theorem~\ref{thm:theorem3.1'} by Suslin in \cite[Theorem~2]{suslin1977stably}.
\begin{theorem}\label{thm:theorem3.1}
	Let $R$ be a commutative noetherian ring with unity and $(a_0,a_1,\cdot\cdot\cdot,a_r)\in \Um_{r+1}(R)$. Let $n_0,n_1,\cdot\cdot\cdot,n_r$ be positive integers. Suppose that $\prod\limits_{i=0}^rn_i$ is divisible by $r!$. Then there exists a matrix $\alpha\in \SL_{r+1}(R)$ such that $e_1\alpha=(a_0^{n_0},a_1^{n_1},\cdot\cdot\cdot,a_r^{n_r})$.
\end{theorem}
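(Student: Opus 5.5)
We would prove the statement by induction on $r$. The first step is to reduce to a universal situation: it suffices to treat the ring $A=\mathbb{Z}[x_0,\dots,x_r,y_0,\dots,y_r]/(\sum x_iy_i-1)$ and the row $(x_0,\dots,x_r)$, since a matrix over $A$ specializes to $R$. The cases $r=0,1$ are trivial. For $r=1$, if $a_0b_0+a_1b_1=1$ then every $(a_0^{n_0},a_1^{n_1})$ is unimodular of length $2$ and hence completable to $\SL_2$. We will take the case $r=2$ with exponents $(2,1,1)$, given explicitly by Theorem~\ref{thm:theorem3.1'} of Swan--Towber, as the model for the general base case.

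The argument then has two ingredients, which we would establish in this order.

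\textbf{(A) A multiplicativity lemma.} Let $c=(c_1,\dots,c_r)$. If $(a,c)$ and $(b,c)$ are unimodular and both are completable to matrices of determinant $1$, then so is $(ab,c)$. We would prove this in Vaserstein's style. Complete $(a,c)$ to $\alpha$. Working modulo the elementary action, use the relation $(ab,c)\sim_E (b,c)\,\alpha'$, where $\alpha'$ is obtained from $\alpha$ by a diagonal scaling. Concretely, $\mathrm{diag}(b,b^{-1},1,\dots)$ is not available, so we would instead use the standard Mennicke-type manipulation: first add multiples of the $c_i$ to make $a$ and $b$ comaximal, then use $(a,c)\perp(b,c)$ in $\SL_{2(r+1)}$ and Whitehead-type elementary operations to extract a completion of $(ab,c)$.

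Combined with the trivial fact that powering the entries of a unimodular row gives another unimodular row, (A) shows that the set of admissible exponent vectors is closed under coordinatewise multiplication. It is also closed under permutation, since permuting columns costs at most a sign, which we fix by negating one column.

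\textbf{(B) A base case.} The row $(a_0,a_1,a_2^2,a_3^3,\dots,a_r^r)$ is completable, with exponent product exactly $r!$. We would prove this by induction on $r$, generalizing the Swan--Towber matrix. Pass to $\bar R=R/(a_r)$, where by induction $(\bar a_0,\dots,\bar a_{r-1}^{\,r-1})$ is completable. Lift the completion to $R$ and build an $(r+1)\times(r+1)$ matrix whose determinant is $\equiv 1$ modulo $a_r$. The exponent $r$ on $a_r$ is then used to correct the determinant. The factor $r$ arises because expanding the determinant along the last row produces $a_r$ times an $r$-fold Koszul-type cofactor sum, which must be absorbed by $a_r^{r}$.

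With (A) and (B) in hand, the general case goes as follows. Given $(n_0,\dots,n_r)$ with $r!\mid\prod n_i$, we must redistribute the factors of $r!$ among the coordinates. Applying (B) to the row $(a_0^{k_0},\dots,a_r^{k_r})$ yields the exponents $(k_0,k_1,2k_2,\dots,rk_r)$ and all their permutations. We then need a \emph{transfer} step: if $(a^{pm},b,c)$ is admissible then so is $(a^{m},b^{p},c)$, and conversely. We would try to derive this from (A) together with the $r=1$ and $r=2$ cases applied to subrows, using the Vaserstein symbol and Lemma~\ref{lemma:gsharma2.1} as a guide.

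We expect this transfer step, i.e.\ moving a prime factor from one coordinate to another, to be the main obstacle, together with the determinant correction in (B). If the transfer cannot be obtained directly, the fallback is Suslin's original route. That route proves the statement directly for all exponents by induction on $r$, applying the inductive hypothesis over $R/(a_r^{n_r})$. Over this ring the product condition for the remaining $r$ coordinates requires only $(r-1)!$ divisibility, and the leftover factor is absorbed by $n_r$ through a cofactor computation.
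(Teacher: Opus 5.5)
The paper does not prove this statement; it quotes it from Suslin. Your sketch therefore has to carry the whole argument. As it stands it contains a false lemma and leaves the essential steps unproved.

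\textbf{Lemma (A) is false.} It fails already for $r=2$ over a noetherian ring.

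Take $R=\mathbb{R}[x,y,z]/(x^2+y^2+z^2-1)$, $c=(x,yz)$, $a=y+z$, $b=z-y$.
\begin{itemize}
\item Both $(a,c)$ and $(b,c)$ are unimodular, since $x^2+a^2-2yz=1=x^2+b^2+2yz$.
\item Both are completable. We have $(y+z,x,yz)\sim_E(y+z,x,-z^2)$ and $(z-y,x,yz)\sim_E(z-y,x,z^2)$. These are completable by Theorem~\ref{thm:theorem3.1'}, after a permutation and sign changes, because $(y+z,x,z)$ and $(z-y,x,z)$ are unimodular.
\item However $(ab,c)=(z^2-y^2,x,yz)$ is not completable, even to $\GL_3(R)$. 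On real points it induces a map $f\colon S^2\to S^2$. In the coordinate $w=z+iy$, $f$ is $(x,w)\mapsto(x,w^2)$ up to a linear change of target coordinates and normalization, so $\deg f=\pm 2$.
\item The second row of a completion would, after Gram--Schmidt, give $g\colon S^2\to S^2$ with $g(p)\perp f(p)$ for all $p$. Then $f\simeq g$ and $f\simeq -g$, which forces $\deg f=0$.
\end{itemize}
So no elementary manipulation of $(a,c)\perp(b,c)$ can extract a completion of $(ab,c)$.

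Moreover, (A) is not what gives closure of admissible exponent vectors under coordinatewise multiplication. That closure is automatic: apply admissibility of $m$ to the unimodular row $(a_0^{m_0'},\dots,a_r^{m_r'})$. So (A) never contributed to the hard part of the argument.

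\textbf{The essential steps are only named, not proved.}

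\emph{The determinant correction in (B).} You need that completability of $(\bar a_0,\dots,\bar a_{r-1}^{\,r-1})$ over $R/(a_r)$ yields completability of $(a_0,\dots,a_{r-1}^{\,r-1},a_r^{\,r})$ over $R$. This is asserted, not constructed. Suppose you lift the completion to $\beta$ with $\det\beta=1+a_rt$. Border it by the column $a_r^{r}e_1^T$ and a last row $(w,s)$. The determinant is $s(1+a_rt)-a_r^{r}\,wC$, where $C$ is the cofactor vector of the first row of $\beta$. Making this equal to $1$ requires $C$ to be unimodular modulo $1+a_rt$, which need not hold. So the lower rows of $\beta$ must be re-chosen, and that re-choice is the real lemma.

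\emph{The transfer step.} Moving a factor of the exponent from one coordinate to another is the heart of the general statement. You give no argument for it, and (A) cannot supply one.

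\emph{The fallback.} As you describe it, the missing factor is absorbed by $n_r$. That only reaches exponent vectors which, after permutation, satisfy $r\mid n_r$ and $(r-1)!\mid\prod_{i<r}n_i$. For $r=3$ this already misses $(6,1,1,1)$. So the fallback also needs exactly the redistribution step you have not provided.
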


By using Theorem~\ref{thm:theorem3.1}, Suslin proved the following result in \cite[Theorem~1]{suslin1977stably}.

\begin{theorem}\label{thm:theorem3.2}
	Let $R$ be an affine algebra over an algebraically closed field $K$. Then every stably free $R$-module of rank dim $R$ is free.
\end{theorem}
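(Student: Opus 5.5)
The plan follows Suslin's argument: reduce to completing a unimodular row of length $d+1$, where $d=\dim R$. Then use the affine structure over the algebraically closed field $K$ to move the row, by elementary operations, into the shape $(u^{d!},v_1,\dots,v_d)$, and finish with Theorem~\ref{thm:theorem3.1}.

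\emph{Reduction to a unimodular row.} Let $P$ be stably free of rank $d$, say $P\oplus R^k\cong R^{d+k}$. By Bass cancellation \cite[Chapter IV]{bass2006algebraic}, projective modules of rank $>d$ cancel. So $P\oplus R^k\cong R^{d+k}$ can be cut down step by step to $P\oplus R\cong R^{d+1}$. Then $P\cong \ker(v)$ for some $v=(v_0,\dots,v_d)\in\Um_{d+1}(R)$, and $P$ is free if and only if $v$ is completable to a matrix in $\SL_{d+1}(R)$. Since elementary operations preserve completability, it suffices to find $\varepsilon\in\E_{d+1}(R)$ with $v\varepsilon=(u^{d!},w_1,\dots,w_d)$ where $(u,w_1,\dots,w_d)\in\Um_{d+1}(R)$. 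Theorem~\ref{thm:theorem3.1}, with $n_0=d!$ and $n_1=\dots=n_d=1$, then gives $\alpha\in\SL_{d+1}(R)$ with $e_1\alpha=(u^{d!},w_1,\dots,w_d)$.

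\emph{General position.} Using that $R$ is affine over the infinite field $K$, I would add suitable multiples of $v_0$ to $v_1,\dots,v_d$ (Swan's Bertini-type general position argument, a prime-avoidance argument carried out one coordinate at a time). The goal is to arrange that $J=(v_1,\dots,v_d)$ has height $d$. Then $B=R/J$ is zero-dimensional, and $v_0$ is a unit in $B$ because $v$ is unimodular. Since $B$ is a finite $K$-algebra and $K$ is algebraically closed, every residue field of $B$ is $K$ itself.

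\emph{Extracting the $d!$-th root.} The remaining task is to find $u\in R$ with $u^{d!}\equiv v_0 \pmod J$. Given such $u$, adding $J$-multiples to the first coordinate is an elementary operation, so $v\sim_E(u^{d!},v_1,\dots,v_d)$. Moreover $(u,v_1,\dots,v_d)$ is unimodular, because $u$ is a unit modulo $J$. The root extraction is easy when $B$ is reduced: then $B\cong K^m$, and every element of $K^*$ has a $d!$-th root. I expect this step to be the main obstacle. When $B$ is not reduced, a unit has the form $\lambda(1+n)$ with $\lambda$ coming from $(K^*)^m$ and $n$ nilpotent. If $d!$ is invertible in $K$, the binomial series for $(1+n)^{1/d!}$ terminates and gives the root. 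In positive characteristic $p\le d$ this fails, so I would first try to strengthen the general position step so that $J$ is radical, i.e.\ $B$ is reduced. The approach is to apply Bertini to the smooth locus of $\operatorname{Spec}R$ and push the zero locus of $v_1,\dots,v_d$ away from the singular locus, which has dimension $<d$, using the fact that $v$ is unimodular. If that cannot be arranged, the fallback is to exploit the freedom to modify $v_0$ by elements of $J$ together with Suslin's $\SL$-completion trick, replacing the exponent $d!$ on $v_0$ by a distribution $n_0,\dots,n_d$ of exponents whose product is divisible by $d!$.
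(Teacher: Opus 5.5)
The paper gives no proof of this statement; it is quoted from Suslin. Your outline is the standard proof of Suslin's theorem: cancel down to $P\oplus R\cong R^{d+1}$, put $v$ in general position so that $B=R/(v_1,\dots,v_d)$ is finite over $K$, make $v_0$ a $d!$-th power modulo $J$, and apply Theorem~\ref{thm:theorem3.1}. Modulo the usual general-position lemma, your argument is complete when $d!\in K^*$. That case covers every use of the theorem in the paper (e.g.\ Theorem~\ref{thm:theorem3.4}, where $3!\in K^*$). The statement, however, has no characteristic hypothesis, and for $0<\operatorname{char}K=p\le d$ you leave the decisive step as a tentative plan plus a fallback.

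Your primary plan is the right one, but two ingredients are missing.

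\emph{Reduce to $R_{\mathrm{red}}$ first.} For non-reduced $R$ the non-regular locus can be all of $\operatorname{Spec}R$ (e.g.\ $R=K[x_1,\dots,x_d,\varepsilon]/(\varepsilon^2)$), so your claim that it has dimension $<d$ fails. The reduction is harmless because completability lifts along nilpotent ideals. Lift $\bar\alpha\in\SL_{d+1}(R_{\mathrm{red}})$ with $e_1\bar\alpha=\bar v$ to some $\alpha\in\SL_{d+1}(R)$. Then $v\alpha^{-1}=(1+n_0,n_1,\dots,n_d)$ with all $n_i$ nilpotent, and this row is $\sim_E e_1$.

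\emph{Use non-constant multipliers.} Unimodularity forces $V(v_1+c_1v_0,\dots,v_d+c_dv_0)\subseteq D(v_0)$, so you are cutting $D(v_0)$ by the functions $v_i/v_0+c_i$. With $c_i\in K$ these cuts are fibres of $(v_1/v_0,\dots,v_d/v_0)$. In characteristic $p$ such fibres can all be non-reduced, e.g.\ when this map is inseparable. Instead take $c_i$ general in $\operatorname{span}_K(1,x_1,\dots,x_n)$, where $x_1,\dots,x_n$ generate $R$. The cuts then become general hyperplane sections of an immersion of $D(v_0)$. The classical Bertini theorem, valid in every characteristic, makes $V(v'_1,\dots,v'_d)$ a finite reduced subscheme of the regular locus, so $B\cong K^m$. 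This is exactly what Swan's Bertini theorem supplies.

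Your fallback would not avoid the issue. Any $n_0,\dots,n_d$ with $d!\mid\prod n_i$ has some $n_i$ divisible by $p$. In an Artinian $K$-algebra with residue fields $K$ one has $(\lambda(1+m))^p=\lambda^p(1+m^p)$, so a unipotent unit $1+m$ is a $p$-th power only if $m$ is. Reducedness of $B$ is therefore the real content of the small-characteristic case, not an optional refinement.
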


Next we note an implicit result of Fasel, Rao and Swan in \cite[Theorem 7.5]{fasel2012stably}.
\begin{theorem}\label{thm:theorem3.3}
	Let $R$ be a smooth affine algebra of dimension $d\geq 3$ over an algebraically closed field $K$ and $(gcd(d-1)!,char(K))=1$. Let $v=(v_1,\cdot\cdot\cdot,v_d)\in \Um_d(R)$. Then there exists $\epsilon \in \E_d(R)$ such that
	\begin{ceqn} 
	\begin{align*}
		v\epsilon=(w_1,w_2,\cdot\cdot\cdot,w_d^{(d-1)!})
	\end{align*}
\end{ceqn}
	for some $(w_1,w_2,\cdot\cdot\cdot,w_d)\in \Um_d(R)$.
	
\end{theorem}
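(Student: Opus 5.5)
The plan is to reduce the statement to a divisibility property of a suitable group attached to unimodular rows over a ring of dimension one less. That divisibility should hold over an algebraically closed field once the relevant primes are invertible. The extracted power is then transported back to $R$ via Theorem~\ref{thm:theorem3.1}-type identities.

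\textbf{Step 1 (general position).} Let $v=(v_1,\dots,v_d)\in\Um_d(R)$. Using elementary transformations together with a Swan–Bertini type argument, I would first arrange that $A:=R/(v_d)$ is a smooth affine $K$-algebra of dimension $d-1$. This is possible because $K$ is infinite and $R$ is smooth: add generic combinations $\sum_{i<d}\lambda_i v_i$ to $v_d$, which are elementary moves. Write $\bar u=(\bar v_1,\dots,\bar v_{d-1})\in\Um_{d-1}(A)$.

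\textbf{Step 2 (divisibility downstairs).} The orbit set of length-$(d-1)$ rows over the $(d-1)$-dimensional ring $A$ carries no van der Kallen group structure. I would therefore work with $\Um_{d}(R)/\E_d(R)$ directly, viewed through the $(\dim+1)$-rows of $A$ and their obstruction group. For smooth affine algebras over algebraically closed fields, this obstruction group is a quotient of a motivic cohomology / Chow–Witt group of zero-cycles. It is divisible by integers prime to $\operatorname{char}K$, and hence by $(d-1)!$ under the hypothesis $(\gcd((d-1)!,\operatorname{char}K))=1$. In low degree one can make this concrete with the Vaserstein symbol and $\W_E$, using Lemma~\ref{lemma:gsharma2.1} and Lemma~\ref{lemma:lemma2.1'} to interpret $n$-th powers of classes as rows $(v_0^n,v_1,v_2)$. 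The upshot should be a row $(w_1,\dots,w_{d-1},w_d)$ with $w_d\equiv v_d$ such that $[v]$ is the $(d-1)!$-th ``power'' of $[w]$ in the sense of Lemma~\ref{lemma:lemma2.1'}.

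\textbf{Step 3 (realizing the power as an entry).} Power classes must be converted into actual rows with a $(d-1)!$-th power entry, up to $\E_d(R)$. This uses the fact that $(w_1,\dots,w_d)\mapsto(w_1,\dots,w_d^{n})$ corresponds to multiplication by $n$ in the relevant group. The required ingredient is a sign-invariance hypothesis of the kind in Lemma~\ref{lemma:lemma2.1'}, namely $(w_1,\dots,w_d)\sim_E(w_1,\dots,-w_d)$. I would prove this using that $-1$ is a square in $K$, so that multiplication by $-1$ is realized by an elementary (indeed diagonal-of-squares) transformation. This gives $v\epsilon=(w_1,\dots,w_{d-1},w_d^{(d-1)!})$ for some $\epsilon\in\E_d(R)$.

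\textbf{Main obstacle.} I expect Step 2 to be the hard part: identifying the orbit set of length-$d$ rows with a group where divisibility is available, and proving that divisibility. Over an algebraically closed field I would prove divisibility via Suslin's theorem that $K$-theory and motivic cohomology of zero-cycles on smooth varieties over algebraically closed fields are divisible, as in Fasel–Rao–Swan. Everything else consists of elementary matrix manipulations.
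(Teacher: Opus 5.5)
The paper gives no proof of this statement; it quotes it as implicit in \cite[Theorem~7.5]{fasel2012stably}. Judged on its own, your proposal has a genuine gap exactly at the step you flag as the main obstacle. Step~2 never specifies the group in which $[v]$ is supposed to be divisible. Reducing modulo the single entry $v_d$ gives a row $\bar u$ of length $d-1=\dim A$, not a $(\dim A+1)$-row. So ``viewing $\Um_d(R)/\E_d(R)$ through the $(\dim+1)$-rows of $A$'' has no content. You also state no group law on $\Um_d(R)/\E_d(R)$ in which $(w_1,\dots,w_d^{n})$ represents $n[w]$, no map from it to a zero-cycle or Chow--Witt group, and no divisibility theorem for it. The ``upshot'' of Step~2 is therefore essentially the theorem itself. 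The only power rule at hand, Lemma~\ref{lemma:lemma2.1'}, is for rows of length $3$ and lives in $\W_E$, so it is reached only through the Vaserstein symbol.

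Your justification of the sign hypothesis is also wrong. A diagonal matrix that changes the sign of exactly one entry and fixes the others has determinant $-1$, so it is not in $\E_d(R)$, whatever square roots $K$ contains. What $\lambda=\sqrt{-1}\in K$ actually gives is $D=\mathrm{diag}(\lambda,\lambda,-\lambda,-\lambda)\in\SL_4(K)=\E_4(K)$ with $D^{t}V(v,w)D=V\bigl((-v_0,v_1,v_2),(-w_0,w_1,w_2)\bigr)$. That is an equality of classes in $\W_E$. To conclude $(v_0,v_1,v_2)\sim_E(-v_0,v_1,v_2)$ you additionally need injectivity of the Vaserstein symbol.

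The missing idea is to reduce to length $3$, not to length $d-1$:
\begin{enumerate}
\item By elementary moves and Swan's Bertini theorem, arrange that $T=R/(v_4,\dots,v_d)$ is a smooth affine threefold (nothing to do if $d=3$).
\item On $T$ the Vaserstein symbol identifies $\Um_3(T)/\E_3(T)$ with $\W_E(T)$. The hard input, which is what Fasel--Rao--Swan establish using motivic cohomology, is that this group is divisible prime to $\mathrm{char}\,K$.
\item Divisibility by $(d-1)!$, the sign trick above and Lemma~\ref{lemma:lemma2.1'} give $(\bar v_1,\bar v_2,\bar v_3)\sim_{\E}(u_1^{(d-1)!},u_2,u_3)$ over $T$. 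This is the same mechanism the paper runs in the proof of Theorem~\ref{thm:theorem3.5}.
\item Lift the elementary matrix to $\E_3(R)$. The discrepancy lies in $(v_4,\dots,v_d)$, so further elementary moves remove it.
\item Put the power in the last slot with a signed permutation matrix of determinant $1$, which lies in $\E_d(\mathbb{Z})$.
\end{enumerate}
None of this is present in your Steps~2--3 as written.
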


%*****************************************************************************************************************************************************************************************************************************************************************************************************************************

\section{The Main Theorems} 

In this section, we prove the main results Theorem~\ref{thm:theorem1.3} and Theorem~\ref{thm:theorem1.4}. 

\begin{theorem}\label{thm:theorem3.4}
	Let $R$ be a smooth affine algebra of dimension $3$ over an algebraically closed field $K$ with $3!\in K^*$. Then 
	\begin{enumerate}[(i)]
		\item $\Um_4(R)=e_1\Sp_4(R)$
		\item $\Um_4(R[X])=e_1\Sp_4(R[X]).$
	\end{enumerate}
	
\end{theorem}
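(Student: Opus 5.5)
Part (i) is a reduction to a row in elementary normal form, and part (ii) is a reduction to part (i).

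\textbf{Part (i).} Let $v\in\Um_4(R)$ with $\dim R=3$.
\begin{enumerate}[(a)]
\item Since $n=4=d+1$, Theorem~\ref{thm:theorem3.2} (Suslin) shows that $v$ is completable, so $v\in e_1\SL_4(R)$. Completability alone does not give a symplectic completion, so I will transform $v$ into a special shape.
\item Over a $3$-dimensional ring, $\E_4(R)$ acts on $\Um_4(R)$, and by a standard general position argument I move $v$ by elementary operations to $(a,b,c,d)$ with $(a,b,c)\in\Um_3(R)$ after a further elementary move. Concretely, since $\dim R/(d)\le 2$, stable range arguments let me modify $(a,b,c)$ by multiples of $d$ so that $(a,b,c)$ is unimodular.
\item Then $(a,b,c,d)\sim_E(a,b,c,0)$.
\item By Theorem~\ref{thm:theorem3.3} with $d=3$ and $(d-1)!=2$ invertible, $(a,b,c)\sim_{\E_3}(w_1,w_2,w_3^{2})$.
\item By Theorem~\ref{thm:theorem3.1'} (Swan--Towber), the row $(w_3^2,w_1,w_2)$, after permutation and sign changes, has an explicit completion $\alpha\in\SL_3(R)$.
\end{enumerate}
So $v\sim_E (w_1,w_2,w_3^2,0)$. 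The remaining task is to show that this row lies in $e_1\Sp_4(R)$, using Lemma~\ref{lemma:lemma2.3'} to replace $\E_4$-moves by $\ESp_4$-moves. For this I expect to exhibit $\beta\in\Sp_4(R)$ with first row $(w_1,w_2,w_3^2,0)$, or the same up to a permutation compatible with $\psi_2$. I would do this by building $\beta$ from the Swan--Towber matrix.

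Alternatively, write $u=(w_1,w_2,w_3)$, and let $w$ satisfy $u\cdot w^T=1$. Consider the Vaserstein matrix $V(u,w)$, which is alternating with Pfaffian $1$; its first row is $(0,w_1,w_2,w_3)$. A standard fact is that a row is the first row of a symplectic matrix for the form $\psi_2$ if and only if it is part of a hyperbolic pair. Here I would show that $(w_1,w_2,w_3^2,0)$ extends to a hyperbolic basis by choosing a second vector $y$ with $\langle x,y\rangle_{\psi}=1$ and completing $\{x,y\}^{\perp}$, which is a rank $2$ projective with trivial determinant. The key point is that this rank $2$ complement is stably free of rank $2$ with a symplectic form, hence free symplectic. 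This holds because rank $2$ stably free with trivial determinant equals the Vaserstein class, and that class is trivial when the row is of the form $(w_1,w_2,w_3^{2})$ since $\W_E$-elements of the form $[V]^2$ vanish. I expect this step to be the \emph{main obstacle}: relating the symplectic complement to $\W_E(R)$ and showing that the squared row kills the obstruction. I would use Lemma~\ref{lemma:lemma2.1'} with $n=2$ together with the fact that $(v_0,v_1,v_2)\sim_E(-v_0,v_1,v_2)$ in this setting.

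\textbf{Part (ii).} Let $v\in\Um_4(R[X])$, where $\dim R[X]=4$.
\begin{enumerate}[(a)]
\item By Lemma~\ref{lemma:lemma2.5} with $d=3$ and $3!$ invertible, $v\sim_{\SL} v(0)$, so the projective module $P=R[X]^4/v R[X]$ is stably extended.
\item By Lemma~\ref{lemma:lemma2.6}, $P$ is extended from $R$.
\item By part (i), $v(0)=e_1\beta_0$ with $\beta_0\in\Sp_4(R)$.
\end{enumerate}
To pass from $v\sim_{\SL} v(0)$ to a symplectic statement, I will use a Quillen local-global argument: for each maximal ideal $\mathfrak m$, $v_{\mathfrak m}\sim_{E} v_{\mathfrak m}(0)$ by Theorem~\ref{thm:rao1} combined with the local elementary action. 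Lemma~\ref{lemma:lemma2.3'} converts these local elementary moves to $\ESp_4$, and the symplectic local--global principle for $\ESp_4$ then gives $v\sim_{\ESp_4(R[X])}v(0)$. Composing with $\beta_0$ yields $v\in e_1\Sp_4(R[X])$.
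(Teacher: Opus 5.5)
Step (b) of part (i) is a genuine gap. Bass's stable range theorem for a $3$-dimensional noetherian ring only makes rows of length $\geq 5$ reducible. Knowing $\dim R/(d)\le 2$ does not help, because the modified triple $(a+\lambda_1d,\,b+\lambda_2d,\,c+\lambda_3d)$ must have no common zero on the open set $D(d)$, which is still $3$-dimensional.

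The claim is also far too strong. Once $(a,b,c)\in\Um_3(R)$ we have $(a,b,c,0)\sim_E(a,b,c,1)\sim_E e_1$. So (b)--(c) would give $\Um_4(R)=e_1\E_4(R)$, and Lemma~\ref{lemma:lemma2.3'} would end the proof immediately, making (d), (e) and everything after them superfluous. Your argument uses nothing but $\dim R=3$, so it would apply equally to $\mathbb{R}[x_0,\dots,x_3]/(\sum x_i^2-1)$. There $(x_0,\dots,x_3)$ is not elementarily completable, since an elementary completion would make the inclusion $S^3\hookrightarrow\mathbb{R}^4\setminus\{0\}$ null-homotopic.

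Your alternative route leaves the ``main obstacle'' open and justifies it with a false claim. Classes $[V]^2$ do not vanish in $\W_E(R)$: Lemma~\ref{lemma:lemma2.1'} only identifies $[V(v,w)]^2$ with $[V(v^{(2)},w_1)]$. Moreover, the shape $(w_1,w_2,w_3^2,0)$ plays no role in the obstruction.

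The hyperbolic-pair idea does work if you apply it directly to $v$ and identify the obstruction correctly.
\begin{enumerate}
\item Choose $y$ with $v\psi_2y^T=1$. Then $R^4=(Rv\oplus Ry)\perp Q$ with $Q\oplus R^2\cong R^4$.
\item Theorem~\ref{thm:theorem3.2} gives $Q\oplus R\cong R^3$.
\item $\Um_3(R)=e_1\SL_3(R)$ (Theorem~\ref{thm:theorem3.3} with Theorem~\ref{thm:theorem3.1}) gives $Q\cong R^2$.
\item In a basis $q_1,q_2$ of $Q$ the restricted form is $u\psi_1$ with $u\in R^*$. Hence $v,y,q_1,u^{-1}q_2$ are the rows of a matrix in $\Sp_4(R)$ with first row $v$.
\end{enumerate}
The paper uses exactly these two completability inputs, but in matrix form. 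It writes $v=e_1\sigma$ and $\sigma^t\psi_2\sigma=V(v',w')$, completes $v'=e_1\rho$, and then corrects $\sigma$ to a symplectic matrix. That correction uses the $\W_E$-relation, Lemma~\ref{lemma:lemma2.4}, the group law on completable orbits and Lemma~\ref{lemma:lemma2.3'}. The hyperbolic version avoids that machinery.

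In part (ii) your local step is misattributed. Theorem~\ref{thm:rao1} only gives that $v_{\mathfrak m}$ is $\E$-equivalent to a factorial row, which is $\SL$-completable. It does not give $v_{\mathfrak m}(X)\sim_E v_{\mathfrak m}(0)$; that would need regularity of $R_{\mathfrak m}$ together with Lindel/Vorst-type results. The paper avoids local--global arguments altogether. It proves $\Um_4(R[X])=e_1\SL_4(R[X])$ and $\Um_3(R[X])=e_1\SL_3(R[X])$ from Lemmas~\ref{lemma:lemma2.5}, \ref{lemma:lemma2.6} and Theorems~\ref{thm:theorem3.2}, \ref{thm:theorem3.3}, then reruns part (i) over $R[X]$. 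The hyperbolic argument above also goes through verbatim with these two inputs.
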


\begin{proof}
	\begin{enumerate}[(i)]
		\item Let $v\in \Um_4(R)$. In view of Theorem \ref{thm:theorem3.2}, there exists $\sigma \in \SL_4(R)$ such that 
		\begin{ceqn}
	\begin{align*}
		v=e_1\sigma.
	\end{align*}
\end{ceqn}
	Note that $\sigma^t\psi_2\sigma$ is an alternating matrix, therefore we have 
	\begin{ceqn}
	\begin{align*}
		\sigma^t\psi_2\sigma=V(v',w')
	\end{align*}
\end{ceqn}
	for some $v',w'\in \Um_3(R)$ such that $v'\cdot (w')^t=1$. 
	Since $\Um_3(R)=e_1\SL_3(R)$ by Theorem~\ref{thm:theorem3.3}, there exists $\rho\in \SL_3(R)$ such that 
    \begin{ceqn}
	\begin{align*}
		v'=e_1\rho.
	\end{align*}
\end{ceqn}
	 By \cite[Lemma 5.1,~Theorem 5.2]{suslin1976serre}, in $W_E(R)$ we have
	\begin{ceqn} 
	\begin{align*}
		[V(v',w')]=&[V(e_1\rho,e_1(\rho^{-1})^t)]\\
		=&[(1\perp \rho)^t\psi_2(1\perp \rho)].
	\end{align*}
\end{ceqn}
	Thus there exists $\epsilon \in \E_4(R)$ such that
	\begin{ceqn}
	\begin{align*}
		V(v',w')=\epsilon^t(1\perp \rho)^t\psi_2(1\perp \rho)\epsilon.
	\end{align*}
\end{ceqn}
	In view of Lemma \ref{lemma:lemma2.4}, there exists $\epsilon_1\in \E_3(R)$ such that $\epsilon=\delta_1(1\perp \epsilon_1)^{-1}$ for some $\delta_1\in \ESp_4(R)$.
	Therefore, 
	\begin{ceqn}
	\begin{align*}
        \sigma^t\psi_2\sigma= (1\perp \epsilon_1^{-1})^t\delta_1^t(1\perp \rho)^t\psi_2(1\perp \rho)\delta_1(1\perp \epsilon_1^{-1}).
        	\end{align*}
    \end{ceqn}
     \text{Thus},
    \begin{ceqn}
    	\begin{align*}
        (1\perp \rho^{-1})^t(\delta_1^{-1})^t(1\perp \epsilon_1)^t\sigma^t\psi_2\sigma(1\perp \epsilon_1)\delta_1^{-1}(1\perp \rho^{-1})=\psi_2.
	\end{align*}
\end{ceqn}
	Thus $\sigma(1\perp \epsilon_1)\delta_1^{-1}(1\perp \rho^{-1})\in \Sp_4(R)$. Let $\delta=\sigma(1\perp \epsilon_1)\delta_1^{-1}(1\perp \rho^{-1})$. Therefore,
	\begin{ceqn}
	\begin{align*}
		\sigma=\delta(1\perp \rho)\delta_1(1\perp \epsilon_1^{-1}).
	\end{align*}
\end{ceqn} 
	In view of Lemma \ref{lemma:lemma2.1}, we have 
	\begin{ceqn}
	\begin{align*}
		[e_1\sigma]&=[e_1(\delta(1\perp \rho)\delta_1(1\perp \epsilon_1^{-1}))]\\
		           &=[e_1\delta]\ast [e_1(1\perp \rho)]\ast [e_1\delta_1]\ast [e_1(1\perp \epsilon_1^{-1})].
	\end{align*}
\end{ceqn}
	Thus $[e_1\sigma]=[e_1\delta\delta_1]$ in $\frac{\Cm_4(R)}{\E_4(R)}$. Therefore, there exists $\epsilon'\in\E_4(R)$ such that 
	\begin{ceqn}
	\begin{align*}
		e_1\sigma=(e_1\delta\delta_1)\epsilon'.
	\end{align*}
\end{ceqn}
	In view of Lemma \ref{lemma:lemma2.3'}, there exists $\epsilon_1'\in \ESp_4(R)$ such that 
	\begin{ceqn}
	\begin{align*}
		(e_1\delta\delta_1)\epsilon'=(e_1\delta\delta_1)\epsilon_1'.
	\end{align*} 
\end{ceqn}
	Thus $e_1\sigma=e_1(\delta\delta_1\epsilon_1')$. Note that $\delta\delta_1\epsilon_1'\in \Sp_4(R)$. Therefore, $v=e_1\sigma =e_1(\delta\delta_1\epsilon_1')\in e_1\Sp_4(R)$.\\
		\item First, we show the following:
		\begin{align*}
			&(a)~\Um_4(R[X])=e_1\SL_4(R[X])\\
			&(b)~\Um_3(R[X])=e_1\SL_3(R[X]).
		\end{align*}
		(a)$~$ Let $v(X)\in \Um_4(R[X])$. By Lemma \ref{lemma:lemma2.5}, $v(X)\sim_\SL v(0)$. Now by Theorem \ref{thm:theorem3.2}, $v(0)\sim_\SL e_1$. Thus $v(X)\sim_\SL e_1$.\\
		
		(b)$~$  Let $v(X)\in \Um_3(R[X])$. By Lemma \ref{lemma:lemma2.6}, $v(X)\sim_\SL v(0)$. Now by Theorem \ref{thm:theorem3.3}, $v(0)\sim_\SL e_1$. Thus $v(X)\sim_\SL e_1$.
	\end{enumerate}
The proof of part (ii) proceeds similarly to that of part (i).
\end{proof}

 \begin{theorem}\label{thm:theorem3.5}
	Let $R$ be a smooth affine algebra of dimension $4$ over an algebraically closed field $K$. Assume that $4!\in K^*$ and $\W_E(R)$ is divisible. Then 
	$\Um_3(R)=e_1\SL_3(R)$. As a consequence $\Um_4(R)=e_1\Sp_4(R)$.
	
\end{theorem}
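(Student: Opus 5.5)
Let $v=(v_1,v_2,v_3)\in\Um_3(R)$ with $\dim R=4$. Since $(4-1)!=3!\in K^*$, Theorem~\ref{thm:theorem3.3} gives $\epsilon\in\E_3(R)$ with $v\epsilon=(w_1,w_2,w_3^{6})$ for some $w=(w_1,w_2,w_3)\in\Um_3(R)$. Permuting coordinates by an elementary sign-adjusted permutation, write this as $(w_0^{6},w_1,w_2)$, reindexed so the power sits in the first slot. The plan is to use divisibility of $\W_E(R)$ to replace $w$ by a row whose sixth power is trivial in the Vaserstein sense. Then we conclude via injectivity of the Vaserstein symbol, or more directly via completability.

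\textbf{Step 1: choose a sixth root.} Consider the class $[V(w,w')]\in\W_E(R)$. Since $\W_E(R)$ is divisible, pick an alternating matrix class $x$ with $x^{6}=[V(w,w')]$. By surjectivity of the Vaserstein symbol in dimension $4$ (alternating invertible matrices of Pfaffian $1$ of size $4$ come from $\Um_3$, and stabilisation in $\W_E$ for $d=4$ reduces to size $4$ up to elementary equivalence), write $x=[V(u,u')]$ with $u=(u_0,u_1,u_2)\in\Um_3(R)$. Since $6$ is even, Lemma~\ref{lemma:lemma2.1'} needs $(u_0,u_1,u_2)\sim_E(-u_0,u_1,u_2)$. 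Over a smooth algebra of dimension $4$ with $2\in K^*$ I would try to obtain this from the Fasel–Rao–Swan argument ($\Um_3(R)/\E_3(R)$ being a group of exponent dividing something coprime to $2$ is unlikely). A cleaner route is to apply the lemma to $(u_0^2,u_1,u_2)$, for which the sign condition is classical: $(a^2,b,c)\sim_E(-a^2,b,c)$ holds by the Swan–Towber/Krusemeyer trick. This gives $[V(u^{(6)},\ast)]=x^6=[V(w,w')]$, possibly after adjusting the root to be $x'$ with $x'^{3}$ playing the role.

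\textbf{Step 2: injectivity.} For a smooth affine algebra of dimension $4$ over an algebraically closed field with $4!\in K^*$, I would invoke the injectivity of the Vaserstein symbol $\Um_3(R)/\E_3(R)\to\W_E(R)$, as in Fasel–Rao–Swan. Then $w\sim_E(u_0^{6},u_1,u_2)$. This is completable by Theorem~\ref{thm:theorem3.1}, since $6\cdot1\cdot1$ is divisible by $2!$. Then $(w_0^6,w_1,w_2)$ is again of the form $(a^{6},b,c)$ with $(a,b,c)$ unimodular, hence completable by Theorem~\ref{thm:theorem3.1}. This yields $v\in e_1\SL_3(R)$. I expect the main obstacle to be Steps 1–2: correctly transporting divisibility of $\W_E(R)$ into an elementary-orbit statement in $\Um_3$. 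This hinges on the injectivity and surjectivity of the Vaserstein symbol in this setting and on verifying the sign hypothesis of Lemma~\ref{lemma:lemma2.1'}.

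\textbf{Step 3: the consequence.} Given $v\in\Um_4(R)$, Theorem~\ref{thm:theorem3.2} gives $\sigma\in\SL_4(R)$ with $v=e_1\sigma$. I then repeat verbatim the argument of Theorem~\ref{thm:theorem3.4}(i): write $\sigma^t\psi_2\sigma=V(v',w')$, and use $\Um_3(R)=e_1\SL_3(R)$ just proved to get $\rho$ with $v'=e_1\rho$. Next, Lemma~\ref{lemma:lemma2.4} splits the elementary matrix, and Lemma~\ref{lemma:lemma2.1} together with Lemma~\ref{lemma:lemma2.3'} produce a symplectic matrix with first row $v$.
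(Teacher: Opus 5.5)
Your opening step misapplies Theorem~\ref{thm:theorem3.3}. That theorem concerns rows of length $d=\dim R$, so for $\dim R=4$ it gives $v\sim_E(w_1,w_2,w_3,w_4^{6})$ for $v\in\Um_4(R)$ and says nothing about $\Um_3(R)$. If it did give $v\sim_E(w_1,w_2,w_3^{6})$ for length-$3$ rows, Theorem~\ref{thm:theorem3.1} (as $2!\mid 6$) would complete $v$ at once, and the divisibility of $\W_E(R)$ would be irrelevant. Indeed, the last sentence of your Step~2 concludes exactly from this normal form, so as written Steps~1--2 do no work. The divisibility argument must be applied to $v$ itself. By \cite[Corollary~7.9]{fasel2012stably} the Vaserstein symbol $\Um_3(R)/\E_3(R)\to\W_E(R)$ is a bijection; this is where both your surjectivity and injectivity claims come from. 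So $[V(v,\ast)]=[V(u,u')]^n$ for some $u=(u_0,u_1,u_2)\in\Um_3(R)$, and you want to rewrite the right side as $[V((u_0^n,u_1,u_2),\ast)]$ via Lemma~\ref{lemma:lemma2.1'}.

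That rewriting needs $(u_0,u_1,u_2)\sim_E(-u_0,u_1,u_2)$, which you never establish. Your ``cleaner route'' is circular. Lemma~\ref{lemma:lemma2.1'} applied to $(u_0^2,u_1,u_2)$ relates its cube to $(u_0^6,u_1,u_2)$, but identifying $[V((u_0^2,u_1,u_2),\ast)]$ with $[V(u,u')]^2$ already requires the hypothesis for $u$. The claimed general identity $(a^2,b,c)\sim_E(-a^2,b,c)$ is also unsupported. The missing ingredient is that $K$ is algebraically closed, so $-1=i^2$ with $i\in K$. For $D=\mathrm{diag}(i,i,-i,-i)\in\E_4(K)$ one checks $D^tV((-u_0,u_1,u_2),(-u_0',u_1',u_2'))D=V(u,u')$. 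Hence the two rows have the same Vaserstein symbol and, by injectivity, lie in the same elementary orbit. This is what the paper means by ``since $-1$ is a square in $K$''. With it, your sixth root works as well as the paper's square root (the paper takes $n=2$ and completes $(u_0^2,u_1,u_2)$ by Swan--Towber). Finally, in Step~3, Theorem~\ref{thm:theorem3.2} for $\dim R=4$ concerns $\Um_5(R)$, not $\Um_4(R)$. The completion $v=e_1\sigma$ of $v\in\Um_4(R)$ comes from Theorem~\ref{thm:theorem3.3} with $d=4$ combined with Theorem~\ref{thm:theorem3.1}, as the paper notes. After that you follow the proof of Theorem~\ref{thm:theorem3.4}(i) exactly as the paper does.
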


\begin{proof}
	First, we will show that $\Um_3(R)=e_1\SL_3(R)$. Let $v=(v_0,v_1,v_2)\in \Um_3(R)$. By \cite[Corollary 7.9]{fasel2012stably}, the set $\frac{\Um_3(R)}{\E_3(R)}$ is in bijection with $\W_E(R)$ and is thus endowed with the structure of an abelian group. By assumption $\W_E(R)$ is divisible and $\frac{\Um_3(R)}{\E_3(R)}$ is in bijection with $\W_E(R)$, $\frac{\Um_3(R)}{\E_3(R)}$ is a divisible group prime to the characteristic of $K$. Therefore, there exists a unimodular row $(u_0,u_1,u_2)\in \Um_3(R)$ such that 
	\begin{ceqn}
	\begin{align*}
		[(v_0,v_1,v_2)]=[(u_0,u_1,u_2)]^2
	\end{align*}
\end{ceqn}
	in $\frac{\Um_3(R)}{\E_3(R)}$. Since $-1$ is a square in $K$, Lemma \ref{lemma:lemma2.1'}, shows that $[(v_0,v_1,v_2)]^n=[(v_0^n,v_1,v_2)]$ in $\frac{\Um_3(R)}{\E_3(R)}$ for any $n\in \mathbb{N}$. Thus $[(u_0,u_1,u_2)]^2=[(u_0^2,u_1,u_2)]$. By Theorem~\ref{thm:theorem3.1'}, there exists $\alpha\in\SL_3(R)$ such that
	\begin{ceqn}
		\begin{align*}
			(u_0^2,u_1,u_2)=e_1\alpha
		\end{align*}
	\end{ceqn}  
	
	Thus, $\Um_3(R)=e_1\SL_3(R)$. Note that the implicit result of Fasel-Rao-Swan (cf.~Theorem~\ref{thm:theorem3.3}) asserts that $\Um_4(R)=e_1\SL_4(R)$. Now the proof of the last statement of the theorem is analogous to the proof of the part (i) of the Theorem \ref{thm:theorem3.4}.

\end{proof}

\section{Symplectic completion over polynomial rings}
 
 In \cite{rao1988bass}, Rao proved that if $R$ is a local ring of dimension $d\geq 2$, $v\in \Um_{d+1}(R[X])$ and $\frac{1}{d!}\in R$, then $v\sim_\E(w_0^{d!},w_1,\cdot\cdot\cdot,w_d)\sim_\SL e_1$ for some $(w_0,\cdot\cdot\cdot,w_d)\in \Um_{d+1}(R[X])$. In this section we study the symplectic completion over polynomial extension of local ring of dimension $3$.
 
 \begin{theorem}\label{thm:theorem4.1}(\cite[Theorem 2.4]{rao1988bass})
 	Let $R$ be a local ring of dimension $d\geq 2$. Let $v\in \Um_{d+1}(R[X])$. If $\frac{1}{d!}\in R$, then $v\sim_\E(w_0^{d!},w_1,\cdot\cdot\cdot,w_d)\sim_\SL e_1$ for some $(w_0,\cdot\cdot\cdot,w_d)\in \Um_{d+1}(R[X])$.
 \end{theorem}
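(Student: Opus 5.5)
The plan follows Rao's original argument, which proceeds in two stages. First, elementarily transform $v$ into a row whose first entry is a $d!$-th power. Second, invoke Suslin's factorial-row theorem (Theorem~\ref{thm:theorem3.1}) to complete that row.

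\textbf{Reduction to a monic entry.} Since $R$ is local and $\frac{1}{d!}\in R$, standard Quillen--Suslin-type manoeuvres apply. Let $v(X)=(v_0,\dots,v_d)\in\Um_{d+1}(R[X])$.
\begin{itemize}
\item After a change of variables and elementary operations, arrange that $v_0$ is monic in $X$; this is possible after passing to $R(X)$-style localizations, or via the usual trick of adding high powers of $X$.
\item Work over $B=R[X]/(v_0)$, which is a finite $R$-algebra of dimension $\le d-1$.
\end{itemize}

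\textbf{Making $v_0$ a $d!$-th power.} Let $\bar w=(\bar v_1,\dots,\bar v_d)\in\Um_d(B)$. The length is $d$ and $\dim B\le d-1$, so we are in the stable-range regime where van der Kallen's group structure on $\Um/\E$ applies (or Bass stable range reductions). The aim is to show that $(\bar v_1,\dots,\bar v_d)$ is elementarily equivalent over $B$ to a row $(\bar v_1',\dots,\bar v_d'^{\,d!})$. To get this:
\begin{itemize}
\item Use the Mennicke--Newman / Swan--Towber type identity $[(a^{n},b,\dots)]=[(a,b,\dots)]^n$, generalised via Theorem~\ref{thm:theorem3.1}, together with divisibility of the relevant orbit group.
\item Then lift the elementary operations over $B$ back to $R[X]$. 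The standard lemma applies: if $(a,\bar w)$ is unimodular and $\bar w\varepsilon$ with $\varepsilon\in\E_d(B)$, then $(a,w)\sim_E(a,w\tilde\varepsilon)$.
\item Finally use the identity $(a,b^n c)\sim_E$-swap tricks to move the power into the first slot.
\end{itemize}
This yields $v\sim_E(w_0^{d!},w_1,\dots,w_d)$.

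\textbf{Completion.} Theorem~\ref{thm:theorem3.1}, applied with $r=d$, $n_0=d!$ and the other $n_i=1$, gives $\alpha\in\SL_{d+1}(R[X])$ with $e_1\alpha=(w_0^{d!},w_1,\dots,w_d)$. Hence $v\sim_\SL e_1$.

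\textbf{Main obstacle.} The hard step is the middle one: showing that the class over the quotient ring is a $d!$-th power. Here the local hypothesis on $R$ is crucial. I would try to follow Rao's approach first: use that $R$ is local to make $v_0$ monic, then use Suslin's $n!$-theorem over $B=R[X]/(v_0)$ together with the fact that over the local base the orbit space is small (Horrocks-type patching reduces to $R_{\mathfrak m}[X]$ and $R(X)$). If this direct route fails, I would quote it from \cite[Theorem~2.4]{rao1988bass}, since this is precisely Rao's result.
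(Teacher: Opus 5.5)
The paper gives no proof of this statement. It quotes it from Rao \cite[Theorem~2.4]{rao1988bass} and uses it as a black box, so your fallback of citing Rao is exactly the paper's treatment. The direct route you sketch, however, breaks at its first step, and the break is not cosmetic.

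You cannot in general make an entry of $v$ monic. In one variable there is no Nagata-type change of coordinates to appeal to. Passing to $R(X)$ or $R\langle X\rangle$ changes the ring over which the conclusion is required. Adding $X^Nv_j$ to $v_0$ gives a unit leading coefficient only if some $v_j$ already has one.

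Worse, if you could do it, the theorem would collapse. For $v_0$ monic of positive degree, $B=R[X]/(v_0)$ is a finite algebra over the local ring $R$, hence semilocal. So $(\bar v_1,\dots,\bar v_d)\sim_{\E}(1,0,\dots,0)$ over $B$. Lifting the elementary matrices and clearing multiples of $v_0$ then gives $v\sim_{\E}e_1$, without ever using $\frac{1}{d!}\in R$. Thus ``some elementary transform of $v$ has a monic entry'' is equivalent to $v\in e_1\E_{d+1}(R[X])$, which is a stronger assertion than the theorem itself. Your dimension count is also wrong: for such $v_0$ the map $R\to B$ is injective and integral, so $\dim B=d$, not $\le d-1$.

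The normalization that genuinely lowers the dimension is different. Complete $\bar v$ elementarily over the Euclidean ring $(R/\mathfrak m)[X]$ and lift, so that $v_0\equiv 1\pmod{\mathfrak m[X]}$. Then every prime of $R[X]$ containing $v_0$ is an upper over a non-maximal prime of $R$, whence $\dim R[X]/(v_0)\le d-1$.

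But now $B$ is not semilocal, and the entire content of the theorem is to show that $(\bar v_1,\dots,\bar v_d)$ can be moved, elementarily over $B$, to a row with a $d!$-th power entry. For this you only invoke ``divisibility of the relevant orbit group''. You do not justify it, and it has no reason to hold for such $B$. For $d=2$ the rows even have length $2$ over a ring of dimension $\le 1$, where no group structure on the orbit set is available in general.

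Nothing in your sketch explains how the locality of $R$ and the invertibility of $d!$ produce the required root. That is the missing idea, and it is exactly what Rao's paper supplies. The remaining steps are fine: lifting elementary operations from $B$, moving the power to the first slot, and completing via Theorem~\ref{thm:theorem3.1} with $n_0=d!$.
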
 

\begin{theorem}\label{thm:theorem4.2}(\cite[Theorem 3.1]{rao1991completing})
	Let $R$ be a local ring of dimension $3$ with $\frac{1}{2}\in R$. Let $v=(v_0,v_1,v_2)\in \Um_3(R[X])$. Then $v$ can be completed to an invertible matrix i.e. $v\sim_\SL e_1$.
	
\end{theorem}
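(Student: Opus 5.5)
The plan is to show that every $v\in\Um_3(R[X])$ is elementarily equivalent to a row of the form $(u_0^2,u_1,u_2)$. The Swan--Towber identity (Theorem~\ref{thm:theorem3.1'}) then completes such a row to a matrix of determinant $1$, giving $v\sim_{\SL}e_1$. The hypothesis $\frac12\in R$ is used twice: to make $-1$ a square up to the relevant symbols, and to extract square roots in a suitable orbit group.

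\textbf{Step 1 (reduction to a nice first entry).} Since $R$ is local, I would first elementarily transform $v$ so that $v_0$ has good properties. By Suslin's monic polynomial trick (a change of variables $X\mapsto X+\lambda$ is not available over an arbitrary local $R$, so instead one adds multiples of $v_1,v_2$ to $v_0$), the aim is that $v_0(0)$ is a unit of $R$ after translating by elements of $R$, or at least that $R[X]/(v_0)$ has dimension $\le 2$. Then $\bar v=(\bar v_1,\bar v_2)\in\Um_2(R[X]/(v_0))$, and elementary operations on $(v_1,v_2)$ modulo $v_0$ lift to elementary operations on $v$. The statement to aim for is $v\sim_E(v_0,v_1,v_2)$ with $\dim R[X]/(v_0)\le 2$.

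\textbf{Step 2 (Vaserstein symbol and square roots).} Next I would pass to $\W_E$. By the Vaserstein rule (Lemma~\ref{lemma:gsharma2.1}), the map $\Um_3/\E_3\to \W_E$ is compatible with the product on rows with a common first entry $v_0$. I would work in the orbit set of rows with first coordinate $v_0$, over $B=R[X]/(v_0)$. There the set of orbits $\Um_2(B)/\E_2$, being of length $2$ over a ring of dimension $\le 2$, carries van der Kallen's group structure. The key claim is that the class of $(\bar v_1,\bar v_2)$ is a square in this group, i.e. $2$ acts surjectively. Here $\frac12\in R$ together with the fact that $B$ is a quotient of a polynomial ring over a local ring is meant to feed an argument in the spirit of Lemma~\ref{lemma:lemma2.1'}: use Lemma~\ref{lemma:lemma2.5}-type extension from $X=0$, and note that over $R$ itself every unimodular row is elementary since $R$ is local. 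That would give $[v]=[w]^2$ for some $w=(v_0,w_1,w_2)$.

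\textbf{Step 3 (converting a square to a power of an entry).} The sign condition of Lemma~\ref{lemma:lemma2.1'} holds because $(v_0,v_1,v_2)\sim_E(-v_0,v_1,v_2)$ after the Step~1 normalization; this uses that $-1\in R^*$, and the relevant unit is a square modulo elementary matrices since $v_0(0)$ may be taken to be a unit. With that condition, Lemma~\ref{lemma:lemma2.1'} gives $[w]^2=[(w_0^2,w_1,w_2)]$, where the entries are permuted as needed. Hence $v\sim_E(w_0^2,w_1,w_2)$, and Theorem~\ref{thm:theorem3.1'} gives $v\sim_{\SL}e_1$.

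\textbf{Main obstacle.} I expect Step~2, proving $2$-divisibility of the relevant orbit group over $R[X]$, to be the hard part. This is where the local and polynomial structure must genuinely enter, since for arbitrary $4$-dimensional rings the statement fails. My first attempt would be to deform $v$ to $v(0)$ via the homotopy $v(TX)$. The class $[v(0)]$ is trivial over the local ring $R$. One would then show that the obstruction lies in a group of the form $\W_E(R[X],XR[X])$, which should be uniquely $2$-divisible when $\frac12\in R$, using $X\mapsto X^2$-type Frobenius/Adams operations on $\W_E$.
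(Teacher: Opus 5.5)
The paper does not prove this statement; it quotes it from \cite[Theorem~3.1]{rao1991completing}. There is no in-paper argument to compare with, so your sketch has to stand on its own, and it does not: there is a genuine gap.

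\textbf{The crux, Step~2, is not proved.} Lemma~\ref{lemma:lemma2.5} concerns rows of length $d+1=4$ over $R[X]$, not length $3$. Any extension result $v\sim v(0)$ for length-$3$ rows over $R[X]$ would prove the theorem outright, since $v(0)\sim_E e_1$ over the local ring $R$, so appealing to one is circular. The fallback in your last paragraph, unique $2$-divisibility of $\W_E(R[X],XR[X])$ via Adams-type operations, is a hope, not a lemma. The group in which you want a square root also does not exist. Van der Kallen's structure on $\Um_n(A)/\E_n(A)$ needs $\dim A\le 2n-4$, so there is none on $\Um_2(B)/\E_2(B)$ for $\dim B\le 2$. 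There is none on $\Um_3(R[X])/\E_3(R[X])$ either, because $\dim R[X]=4$.

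Your Step~1 target is fine and easy to reach. Reduce $v$ modulo $\mathfrak m$, complete it elementarily over the Euclidean ring $k[X]$, and lift the elementary matrices. This gives $v_0\in 1+\mathfrak mR[X]$, and a height count then gives $\dim R[X]/(v_0)\le 2$. Note also that $X\mapsto X+\lambda$ is always available; it just cannot make an entry monic.

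\textbf{Even granting $2$-divisibility, the conclusion does not follow.} The Vaserstein rule (Lemma~\ref{lemma:gsharma2.1}) and Lemma~\ref{lemma:lemma2.1'} are identities between classes in $\W_E(R[X])$, not statements about elementary orbits. To pass from $[V(v,\cdot)]=[V((w_0^2,w_1,w_2),\cdot)]$ in $\W_E(R[X])$ to $v\sim_E(w_0^2,w_1,w_2)$, you need the Vaserstein symbol on $\Um_3(R[X])/\E_3(R[X])$ to be injective. To realize a square root in $\W_E$ by an actual row, you need it to hit that class. Neither is available for $R[X]$ with $R$ an arbitrary $3$-dimensional local ring. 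The paper can argue this way in Theorem~\ref{thm:theorem3.5} only because it imports a bijection from Fasel--Rao--Swan.

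The sign hypothesis of Lemma~\ref{lemma:lemma2.1'} is also mishandled. It must hold for the square root $w$, not for your normalized $v$. Moreover, $(w_0,w_1,w_2)\sim_E(-w_0,w_1,w_2)$ is not automatic, since $-1$ need not be a square in $R$, and the fact that $v_0(0)$ is a unit does not supply it.

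What is missing is an argument that produces the elementary equivalence $v\sim_E(w_0^2,w_1,w_2)$ directly at the level of rows. Once you have that, Theorem~\ref{thm:theorem3.1'} finishes the proof.
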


Now we are ready to prove the Theorem \ref{thm:theorem1.5}:

\begin{theorem}\label{thm:theorem4.3}
	Let $R$ be a commutative noetherian local ring of dimension $3$ with $\frac{1}{3!}\in R$. Then $\Um_4(R[X])=e_1\Sp_4(R[X])$.
\end{theorem}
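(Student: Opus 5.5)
The plan is to run the argument of part (i) of Theorem~\ref{thm:theorem3.4} over $A=R[X]$. The inputs needed there are: completability of rows of length $4$ and $3$ over $A$, the group structure on $\frac{\Cm_4(A)}{\E_4(A)}$ (Lemma~\ref{lemma:lemma2.1}, or the local-ring version from \cite[Corollary 2.34]{rao2017homotopy}, since $R$ is local), and the elementary-to-symplectic lemmas, which hold over any commutative ring.

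\textbf{Step 1 (completability).} Let $v\in\Um_4(R[X])$. By Theorem~\ref{thm:theorem4.1} with $d=3$ (using $\frac{1}{3!}\in R$), there is $\sigma\in\SL_4(R[X])$ with $v=e_1\sigma$. By Theorem~\ref{thm:theorem4.2} (which needs only $\frac12\in R$), every row in $\Um_3(R[X])$ is of the form $e_1\rho$ with $\rho\in\SL_3(R[X])$.

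\textbf{Step 2 (reduce to an alternating matrix of the form $1\perp\rho$).} The matrix $\sigma^t\psi_2\sigma$ is alternating of Pfaffian $1$, so it equals $V(v',w')$ for some $v',w'\in\Um_3(R[X])$ with $v'\cdot (w')^t=1$; one reads $v'$ off the first row. Write $v'=e_1\rho$ by Step 1. By \cite[Lemma 5.1, Theorem 5.2]{suslin1976serre} we get
\[
[V(v',w')]=[(1\perp\rho)^t\psi_2(1\perp\rho)] \quad \text{in } \W_E(R[X]).
\]
This class equality must be upgraded to an actual conjugacy $V(v',w')=\epsilon^t(1\perp\rho)^t\psi_2(1\perp\rho)\epsilon$ with $\epsilon\in\E_4(R[X])$. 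I expect this to be the main obstacle. Equality in $\W_E$ a priori only holds after adding copies of $\psi_l$. The way I would handle it is to observe that both matrices have the same first row, since $v'$ is the first row of both, and to use Suslin's stability for alternating $4\times4$ matrices \cite{suslin1976serre}. If needed, I would instead work directly with first rows, which is all we ultimately use.

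\textbf{Step 3 (pass to symplectic matrices).} Apply Lemma~\ref{lemma:lemma2.4} to write $\epsilon=\delta_1(1\perp\epsilon_1)^{-1}$ with $\delta_1\in\ESp_4(R[X])$ and $\epsilon_1\in\E_3(R[X])$. Then
\[
\delta:=\sigma(1\perp\epsilon_1)\delta_1^{-1}(1\perp\rho^{-1})\in\Sp_4(R[X]), \qquad \sigma=\delta(1\perp\rho)\delta_1(1\perp\epsilon_1^{-1}).
\]
Using the abelian group structure on $\frac{\Cm_4(R[X])}{\E_4(R[X])}$ and the fact that $e_1(1\perp\tau)=e_1$, we obtain $[e_1\sigma]=[e_1\delta\delta_1]$. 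Hence $v=e_1\delta\delta_1\epsilon'$ for some $\epsilon'\in\E_4(R[X])$. Finally, Lemma~\ref{lemma:lemma2.3'} replaces $\epsilon'$ by some $\epsilon_1'\in\ESp_4(R[X])$ with the same effect on the row $e_1\delta\delta_1$, so $v=e_1(\delta\delta_1\epsilon_1')\in e_1\Sp_4(R[X])$.
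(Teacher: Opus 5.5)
Your proposal is correct and follows the paper's proof step for step: Theorems~\ref{thm:theorem4.1} and~\ref{thm:theorem4.2} for completability, $\sigma^t\psi_2\sigma=V(v',w')$ with $v'=e_1\rho$, Lemma~\ref{lemma:lemma2.4}, the group structure on $\frac{\Cm_4(R[X])}{\E_4(R[X])}$, and Lemma~\ref{lemma:lemma2.3'}. The step you flag needs no stability theorem (one is not generally available here), because your own same-first-row observation already settles it: $(1\perp\rho)^t\psi_2(1\perp\rho)$ is a Vaserstein matrix $V(v',w'')$ with the same $v'$, and \cite[Lemma 5.1]{suslin1976serre} (which the paper cites at this point) gives an actual $4\times 4$ conjugacy by an elementary matrix $\begin{pmatrix}1&a\\0&I_3\end{pmatrix}$, since $w'-w''$ is orthogonal to the unimodular row $v'$ and hence equals $a\times v'$ for some $a$.
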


\begin{proof}
	The proof is analogous to the proof of the Theorem \ref{thm:theorem3.4}. For the convenience of the reader, we give a precise proof with all the necessary adjustments to the proof of Theorem \ref{thm:theorem3.4}:\\
		
		Let $v\in \Um_4(R[X])$. In view of Theorem \ref{thm:theorem4.1}, there exists $\sigma \in \SL_4(R[X])$ such that 
		\begin{ceqn}
		\begin{align*}
			v=e_1\sigma.
		\end{align*}
			\end{ceqn}
		Note that $\sigma^t\psi_2\sigma$ is an alternating matrix, therefore we have
		\begin{ceqn} 
		\begin{align*}
			\sigma^t\psi_2\sigma=V(v',w')
		\end{align*}
		\end{ceqn}
		for some $v',w'\in \Um_3(R[X])$ such that $v'\cdot (w')^t=1$. 
		Now by Theorem \ref{thm:theorem4.2}, $\Um_3(R[X])=e_1\SL_3(R[X])$, there exists $\rho\in \SL_3(R[X])$ such that 
		\begin{ceqn}
		\begin{align*}
			v'=e_1\rho.
		\end{align*}
			\end{ceqn}
	By \cite[Lemma 5.1,~Theorem 5.2]{suslin1976serre}, in $W_E(R[X])$  we have
	\begin{ceqn} 
		\begin{align*}
			[V(v',w')]=&[V(e_1\rho,e_1(\rho^{-1})^t)]\\
			=&[(1\perp \rho)^t\psi_2(1\perp \rho)].
		\end{align*}
			\end{ceqn}
		Thus there exists $\epsilon \in \E_4(R[X])$ such that
		\begin{ceqn}
		\begin{align*}
			V(v',w')=\epsilon^t(1\perp \rho)^t\psi_2(1\perp \rho)\epsilon.
		\end{align*}
	\end{ceqn}
		In view of Lemma \ref{lemma:lemma2.4}, there exists $\epsilon_1\in \E_3(R[X])$ such that $\epsilon=\delta_1(1\perp \epsilon_1)^{-1}$ for some $\delta_1\in \ESp_4(R[X])$.
		Therefore, 
		\begin{ceqn}
		\begin{align*}
			\sigma^t\psi_2\sigma= (1\perp \epsilon_1^{-1})^t\delta_1^t(1\perp \rho)^t\psi_2(1\perp \rho)\delta_1(1\perp \epsilon_1^{-1}).
				\end{align*}
		\end{ceqn}
		Thus,
		\begin{ceqn}
			\begin{align*}
			(1\perp \rho^{-1})^t(\delta_1^{-1})^t(1\perp \epsilon_1)^t\sigma^t\psi_2\sigma(1\perp \epsilon_1)\delta_1^{-1}(1\perp \rho^{-1})=\psi_2.
		\end{align*}
	\end{ceqn}
		Thus $\sigma(1\perp \epsilon_1)\delta_1^{-1}(1\perp \rho^{-1})\in \Sp_4(R[X])$. Let $\delta=\sigma(1\perp \epsilon_1)\delta_1^{-1}(1\perp \rho^{-1})$. Therefore,
		\begin{ceqn} 
		\begin{align*}
			\sigma=\delta(1\perp \rho)\delta_1(1\perp \epsilon_1^{-1}).
		\end{align*}
		\end{ceqn}
		In view of Lemma \ref{lemma:lemma2.1}, we have 
		\begin{ceqn}
		\begin{align*}
			[e_1\sigma]&=[e_1(\delta(1\perp \rho)\delta_1(1\perp \epsilon_1^{-1}))]\\
			&=[e_1\delta]\ast [e_1(1\perp \rho)]\ast [e_1\delta_1]\ast [e_1(1\perp \epsilon_1^{-1})].
		\end{align*}
	\end{ceqn}
		Thus $[e_1\sigma]=[e_1\delta\delta_1]$ in $\frac{\Cm_4(R[X])}{\E_4(R[X])}$. Therefore, there exists $\epsilon'\in\E_4(R[X])$ such that
		\begin{ceqn} 
		\begin{align*}
			e_1\sigma=(e_1\delta\delta_1)\epsilon'.
		\end{align*}
		\end{ceqn}
		In view of Lemma \ref{lemma:lemma2.3'}, there exists $\epsilon_1'\in \ESp_4(R[X])$ such that 
		\begin{ceqn}
		\begin{align*}
			(e_1\delta\delta_1)\epsilon'=(e_1\delta\delta_1)\epsilon_1'.
		\end{align*} 
	\end{ceqn}
		Thus $e_1\sigma=e_1(\delta\delta_1\epsilon_1')$. Note that $\delta\delta_1\epsilon_1'\in \Sp_4(R[X])$. Therefore, $v=e_1\sigma =e_1(\delta\delta_1\epsilon_1')\in e_1\Sp_4(R[X])$.
\end{proof}

Next, we prove the relative version of Theorem~\ref{thm:theorem4.3}.

\begin{theorem}\label{thm:theorem4.4}
	Let $R$ be a commutative noetherian local ring of dimension $3$ with $\frac{1}{3!}\in R$. Let $I\subset R$ be an ideal. Then $\Um_4(R[X],I[X])=e_1\Sp_4(R[X],I[X])$.
\end{theorem}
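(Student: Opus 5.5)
We reduce the relative statement to the absolute case, Theorem~\ref{thm:theorem4.3}, by passing to the excision ring. If $I=R$ there is nothing new to prove, so assume $I$ is a proper ideal. Let $C=R\oplus I$ be the excision algebra of $R$ with respect to $I$ (Proposition~\ref{prop:proposition2.1}), with the natural map $g:C\rightarrow R$, $(x,i)\mapsto x+i$.

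\textbf{Properties of $C$.} By Lemma~\ref{lemma:lemma2.2'}, $C$ is a local ring with maximal ideal $m\oplus I$. It is integral over $R$, so $\dim C=3$, and it is noetherian because it is finite over $R$. Since $\frac{1}{3!}\in R\subset C$, all the hypotheses of Theorem~\ref{thm:theorem4.3} hold for $C$. Moreover $C[X]=R[X]\oplus I[X]$ is the excision ring of $R[X]$ with respect to $I[X]$, and $g$ extends to $g:C[X]\rightarrow R[X]$.

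\textbf{Lifting and completing.} Let $v=(1+i_1,i_2,i_3,i_4)\in \Um_4(R[X],I[X])$ with $i_j\in I[X]$.
\begin{enumerate}
\item Form its lift $\tilde v\in \Um_4(C[X],0\oplus I[X])$. This is unimodular over $C[X]$, since the excision map $F$ of Theorem~\ref{thm:theorem2.1'} identifies relative unimodular rows and in particular preserves unimodularity.
\item By Theorem~\ref{thm:theorem4.3} applied to $C$, there is $\tilde\alpha\in \Sp_4(C[X])$ with $\tilde v=e_1\tilde\alpha$.
\item Let $\pi:C[X]\rightarrow R[X]$ be the projection $(x,i)\mapsto x$, whose kernel is $0\oplus I[X]$. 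Then $\pi(\tilde v)=e_1$, so $\bar\alpha:=\pi(\tilde\alpha)\in \Sp_4(R[X])$ satisfies $e_1\bar\alpha=e_1$.
\item Regard $R[X]\subset C[X]$ via $x\mapsto(x,0)$ and set $\tilde\beta=\tilde\alpha\,\bar\alpha^{-1}$. Then $\tilde\beta\in\Sp_4(C[X],0\oplus I[X])$ and $e_1\tilde\beta=\tilde v\,\bar\alpha^{-1}$.
\end{enumerate}

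\textbf{The main obstacle.} The difficulty is that $\tilde v\,\bar\alpha^{-1}$ need not equal $\tilde v$, so we must correct the stabilizer element. The key point is that $\bar\alpha$ fixes $e_1$. Write $\bar\alpha^{-1}$ with first row $e_1$; after pushing down by $g$ it acts on $v$ in a controlled way. Since $g\circ\iota=\mathrm{id}$ on $R[X]$,
\[
g(\tilde\beta)=g(\tilde\alpha)\,\bar\alpha^{-1}, \qquad e_1\,g(\tilde\alpha)=v .
\]
Hence $e_1\bigl(g(\tilde\beta)\bar\alpha\bigr)=v$. But $g(\tilde\beta)\bar\alpha=g(\tilde\alpha)$, which is only congruent to $\bar\alpha$ modulo $I[X]$, not to the identity. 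So we need $\bar\alpha$ itself to be congruent to $I_4$ modulo $I[X]$, after adjusting by a stabilizer of $e_1$.

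\textbf{Two ways to fix this.}
\begin{enumerate}
\item \emph{Refine Theorem~\ref{thm:theorem4.3} to track reductions.} Apply the absolute result over the base $C$ in a relative form: first reduce, via the relative Theorem~\ref{thm:theorem4.1}/Theorem~\ref{thm:theorem2.1'} for the $\E$-part, so that the completing matrix can be chosen with $\pi(\tilde\alpha)=I_4$. The elementary steps ($\E_4$, $\ESp_4$ via Lemmas~\ref{lemma:lemma2.3'} and \ref{lemma:lemma2.4}) can be taken relative, in $\ESp_4(C[X],0\oplus I[X])$, because $\MSE_4(C[X],0\oplus I[X])\cong\MSE_4(C[X])$ by excision. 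Then $g(\tilde\alpha)\in\Sp_4(R[X],I[X])$ directly.
\item \emph{A simpler alternative.} Work over the double $C$, where $\pi$ has a section. Replace $\tilde\alpha$ by $\tilde\alpha\,\iota(\pi(\tilde\alpha))^{-1}$. This still has first row $\tilde v\,\iota(\bar\alpha)^{-1}$, and that row equals $\tilde v$ up to $\Sp_4(C[X],0\oplus I[X])$-action. Then recomplete using Lemma~\ref{lemma:lemma2.3'} in the relative setting.
\end{enumerate}
I would first try the second approach, checking that the first row lands back in the relative orbit. The fallback is the first approach, carrying relative versions of each step of the proof of Theorem~\ref{thm:theorem4.3}.
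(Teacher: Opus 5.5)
Your overall strategy (lift to the excision ring $C=R\oplus I$, complete there by Theorem~\ref{thm:theorem4.3}, then correct by a stabilizer of $e_1$) is the paper's. As written, though, the proof is not finished, and the ``main obstacle'' is self-inflicted: you multiply $\tilde\alpha$ on the \emph{right} by $\bar\alpha^{-1}$, which changes the first row to $\tilde v\bar\alpha^{-1}$. Multiply on the \emph{left} instead. Since $e_1\bar\alpha=e_1$, also $e_1\bar\alpha^{-1}=e_1$. So $\tilde\beta:=\iota(\bar\alpha)^{-1}\tilde\alpha\in\Sp_4(C[X])$ satisfies $e_1\tilde\beta=e_1\tilde\alpha=\tilde v$ and $\pi(\tilde\beta)=\bar\alpha^{-1}\bar\alpha=I_4$, i.e.\ $\tilde\beta\in\Sp_4(C[X],0\oplus I[X])$. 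Since $g$ is a ring homomorphism sending $0\oplus I[X]$ into $I[X]$, $g(\tilde\beta)\in\Sp_4(R[X],I[X])$ and $e_1g(\tilde\beta)=g(\tilde v)=v$. This is exactly the paper's proof: there $\tilde v\sigma=e_1$ (so $\sigma=\tilde\alpha^{-1}$), and $\delta=\sigma\bar\sigma^{-1}$ is your $\tilde\beta^{-1}$. Your remark about ``adjusting by a stabilizer of $e_1$'' was pointing at this. The stabilizer element is $\bar\alpha^{-1}$ itself, placed on the left; equivalently, over $R[X]$ directly, $\bar\alpha^{-1}g(\tilde\alpha)$ has first row $v$ and is $\equiv I_4$ modulo $I[X]$.

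Neither of the fixes you propose closes the gap as stated. In the second, the claim that $\tilde v\bar\alpha^{-1}$ lies in the $\Sp_4(C[X],0\oplus I[X])$-orbit of $\tilde v$ is true, but you give no argument. The natural proof ($\tilde v\bar\alpha^{-1}=e_1\,\bar\alpha\tilde\beta\bar\alpha^{-1}$ with $\tilde\beta$ as above, plus normality of the relative group) already uses the left correction. The further step ``recomplete using Lemma~\ref{lemma:lemma2.3'} in the relative setting'' is unnecessary, and that lemma does not provide it. The first fix would need relative versions of Lemmas~\ref{lemma:lemma2.4} and \ref{lemma:lemma2.3'} and of the symplectic steps of Theorem~\ref{thm:theorem4.3}, none of which are available; the excision bijection for $\MSE_4$ says nothing about symplectic matrices. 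A minor point: unimodularity of the lift $\tilde v$ does not follow from Theorem~\ref{thm:theorem2.1'}, which concerns $\mathbb{Z}\oplus I$ and rows already assumed unimodular. Check it directly: if $\sum_j v_ju_j=1$ then $u_1-1\in I[X]$, and $\tilde u=((1,u_1-1),(u_2,0),(u_3,0),(u_4,0))$ gives $\tilde v\cdot\tilde u^T=(1,0)$. Your treatment of $I=R$ and of $\dim C=3$ is fine.
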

\begin{proof}
	Let $v(X)\in \Um_4(R[X],I[X])$ and $\tilde{v}(X)\in \Um_4((R\oplus I)[X],(0\oplus I)[X])$ be a lift of $v(X)$. By Lemma~\ref{lemma:lemma2.2'}, $R\oplus I$ is also a local ring of dimension $3$. So by Theorem~\ref{thm:theorem4.3}, there exists $\sigma\in \Sp_4((R\oplus I)[X])$ such that $\tilde{v}(X)\sigma=e_1$. Going modulo $(0\oplus I)[X]$, we have $e_1\bar{\sigma}=e_1,~\bar{\sigma}\in\Sp_4(R[X])$.\\
	
	Therefore,
	\begin{ceqn}
		\begin{align*}
			\tilde{v}(X)\sigma=e_1&=e_1\bar{\sigma}.
		\end{align*}
	\end{ceqn} 
	We can view $\bar{\sigma}$ as an element of  $\Sp_4((R\oplus I)[X])$ via the canonical embedding $R[X] \hookrightarrow (R\oplus I)[X]$. Thus, both   $\sigma$  and $\bar{\sigma}$ live in the same group $\Sp_4((R\oplus I)[X])$, so their product is well-defined. Let $\delta=\sigma\bar{\sigma}^{-1}$. Note that $\delta\in\Sp_4((R\oplus I)[X],(0\oplus I)[X])$ and $\tilde{v}(X)\delta=e_1$. Taking projection onto $R[X]$ we obtain the desired result i.e. $v(X)\in e_1\Sp_4(R[X],I[X])$.
\end{proof}

\section{Symplectic completion over graded rings}
In this section we study the symplectic completion over graded ring. Throughout this section we assume $A$ to be a commutative noetherian graded ring with identity $1\neq 0$.  We require the support of the following results to prove the analogous statement over graded rings.

\begin{prop}\label{prop:proposition5.1}
	Let $A=\oplus_{i\geq 0}A_i$ be a graded ring of dimension $d\geq 2$, with  $\frac{1}{d!}\in A$. Assume $A_0$ is a local ring. Then $\Um_{d+1}(A)=e_1\SL_{d+1}(A)$.  
	
\end{prop}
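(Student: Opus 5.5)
Let $v\in\Um_{d+1}(A)$. The plan is to use the Swan--Weibel homotopy to place $v$ in a one-parameter family over $A[X]$ and apply Rao's theorem (Theorem~\ref{thm:rao1}/Theorem~\ref{thm:theorem4.1}) over a localization of $A_0$.

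\textbf{Step 1: reduction to the augmentation ideal.} Write $\bar v=v(0)\in\Um_{d+1}(A_0)$ for the image of $v$ under $A\to A_0=A/A_+$. Since $A_0$ is local, $\bar v\in e_1\E_{d+1}(A_0)$. Lifting this elementary matrix through $A_0\hookrightarrow A$, we may assume $v\equiv e_1 \pmod{A_+}$, so that $v\in\Um_{d+1}(A,A_+)$.

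\textbf{Step 2: the homotopy and local-global.} Put $w(X)=\epsilon(v)(X)\in\Um_{d+1}(A[X])$. Then $w(1)=v$ and $w(0)=e_1$. I will show that $w(X)$ is $\SL$-equivalent to $w(0)$, i.e.\ $w(X)\in e_1\SL_{d+1}(A[X])$; evaluating at $X=1$ then finishes the proof. The natural tool is Quillen--Suslin local--global patching for the $\SL_{d+1}$-action on unimodular rows, or equivalently the statement that the stably free projective module $P$ defined by $w(X)$ is extended from $A$, which holds when it is so locally at every maximal ideal $\mathfrak m$ of $A$.

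For the local statement I would not localize at maximal ideals of $A$. Instead I localize at the multiplicative set $S=1+A_+$, or more cleanly use the standard graded trick: since $A_0$ is local with maximal ideal $\mathfrak m_0$, the ideal $\mathfrak m_0+A_+$ is the unique homogeneous maximal ideal. Any problem in the graded setting that is compatible with the $\mathbb{G}_m$-action via $\epsilon$ reduces to $A_{\mathfrak m_0+A_+}$. Over $B=A_{\mathfrak m_0+A_+}$, a local ring of dimension $\le d$ with $\tfrac1{d!}\in B$, Rao's theorem (Theorem~\ref{thm:theorem4.1}) together with Lemma~\ref{lemma:lemma2.5} gives $w(X)\sim_\SL w(0)=e_1$ over $B[X]$. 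An alternative route is to apply Lemma~\ref{lemma:lemma2.5} directly to $A[X]$; this would be immediate if $\dim A=d$ allowed it, since that lemma only needs $\dim A=d$ and $\tfrac1{d!}\in A$. In fact Lemma~\ref{lemma:lemma2.5} applied to $R=A$ already gives $w(X)\sim_\SL w(0)=e_1$. So I will use this shortcut: $v=w(1)\sim_\SL e_1$.

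\textbf{Main obstacle.} The expected difficulty is justifying that Lemma~\ref{lemma:lemma2.5} (Rao's extendedness result) applies to the graded ring $A$ itself rather than only to local rings. If it does apply as stated for rings of dimension $d$, the argument is two lines. If one prefers the local form (Theorem~\ref{thm:theorem4.1}), then the real work is the homotopy and patching argument: showing that the set of $a\in A$ for which $w(X+aY)\sim_\SL w(X)$ over $A_a[X,Y]$ forms an ideal not contained in any maximal ideal. This uses the fact that every maximal ideal of $A$ contracts to one containing $\mathfrak m_0$, together with the graded Swan--Weibel trick to pass from the homogeneous maximal ideal to arbitrary ones.
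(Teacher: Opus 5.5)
Your committed argument is exactly the paper's proof: apply Lemma~\ref{lemma:lemma2.5} to $A$ itself, which has dimension $d$ and contains $\frac{1}{d!}$, to get $\epsilon(v)(X)\sim_\SL \epsilon(v)(0)$, then evaluate at $X=1$. Your preliminary elementary normalization $v\equiv e_1 \pmod{A_+}$ only replaces the paper's remark that $\epsilon(v)(0)\in\Um_{d+1}(A_0)$ is completable because $A_0$ is local, and the localization-at-$\mathfrak m_0+A_+$ and patching discussion is unnecessary, since Lemma~\ref{lemma:lemma2.5} is stated for arbitrary rings of dimension $d$ (Rao's corollary already contains the local--global step).
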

\begin{proof}
	Let $v\in\Um_{d+1}(A)$. Consider the Swan-Weibel's homotopy map: 
	\begin{ceqn}
		\begin{align*}
			\epsilon : A &\rightarrow A[X]\\
	 a_0+a_1+a_2+\cdot\cdot\cdot &\mapsto a_0+a_1X+a_2X^2+\cdot\cdot\cdot+a_nX^n+\cdot\cdot\cdot
		\end{align*}
	\end{ceqn}
	where $a_i\in A_i$. We denote the polynomial $a_0+a_1X+a_2X^2+\cdot\cdot\cdot+a_nX^n+\cdot\cdot\cdot$ by $\epsilon (a)(X)$ for $a=a_0+a_1+a_2+\cdot\cdot\cdot \in A$. This extends component-wise to vectors, so for $v\in \Um_{d+1}(A)$, we note that $\epsilon (v)(X)\in \Um_{d+1}(A[X])$. By Lemma~\ref{lemma:lemma2.5}, we have 
	\begin{ceqn}
		\begin{align*}
			 \epsilon (v)(X)\sim_\SL \epsilon (v)(0).
		\end{align*}
	\end{ceqn}
	Note that $\epsilon (v)(0)\in \Um_{d+1}(A_0)$ and since $A_0$ is a local ring, 
	\begin{ceqn}
		\begin{align*}
			\epsilon (v)(0)\sim_\SL e_1.
		\end{align*}
	\end{ceqn} Thus, 
	\begin{ceqn}
		\begin{align*}
			\epsilon (v)(X)\sim_\SL e_1.
		\end{align*}
	\end{ceqn}
	Evaluating at $X=1$, we obtain
	\begin{ceqn}
		\begin{align*}
			v=\epsilon (v)(1)\sim_\SL e_1.
		\end{align*}
	\end{ceqn} 
	Hence $\Um_{d+1}(A)=e_1\SL_{d+1}(A)$.  
\end{proof}

\begin{prop}\label{prop:proposition5.2}
		Let $A=\oplus_{i\geq 0}A_i$ be a graded ring of dimension $3$, with  $\frac{1}{3!}\in A$. Assume $A_0$ is a local ring. Then $\Um_{3}(A)=e_1\SL_{3}(A)$.  
\end{prop}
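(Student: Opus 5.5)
We mimic the proof of Proposition~\ref{prop:proposition5.1}, replacing Lemma~\ref{lemma:lemma2.5} by a polynomial-ring result valid for rows of length $3$. Let $v=(v_0,v_1,v_2)\in\Um_3(A)$ and apply the Swan--Weibel homotopy map componentwise. This gives $\epsilon(v)(X)\in\Um_3(A[X])$ with $\epsilon(v)(0)\in\Um_3(A_0)$ and $\epsilon(v)(1)=v$. Unimodularity is preserved because $\epsilon$ is a ring homomorphism $A\to A[X]$: if $\sum v_iu_i=1$, then $\sum\epsilon(v_i)\epsilon(u_i)=\epsilon(1)=1$.

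The key step is to show $\epsilon(v)(X)\sim_\SL\epsilon(v)(0)$ over $A[X]$. We will use Lemma~\ref{lemma:lemma2.6}, which requires $A$ to have dimension $3$ and $\frac{1}{3!}\in A$; both hypotheses hold here. Consider the stably free module $P=\ker\bigl(\epsilon(v)(X)\colon A[X]^3\to A[X]\bigr)$. Since $P\oplus A[X]\cong A[X]^3$, $P$ is stably free and hence stably extended from $A$. Lemma~\ref{lemma:lemma2.6} then gives $P\cong P(0)\otimes_A A[X]$, where $P(0)$ is the kernel of the row $\epsilon(v)(0)$ viewed over $A$.

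It remains to upgrade this isomorphism of kernels to an $\SL$-equivalence of rows. This is the step requiring the most care, though it is standard. An isomorphism $P\cong P(0)[X]$ extends, using the splittings $A[X]^3=P\oplus A[X]w$ and the corresponding one for $P(0)[X]$, to an automorphism $\beta$ of $A[X]^3$ carrying one row to the other. One then adjusts $\beta$ by a unit along the complementary summand so that $\det\beta=1$. This is exactly the argument used in part (ii)(b) of the proof of Theorem~\ref{thm:theorem3.4}, which cites Lemma~\ref{lemma:lemma2.6} to conclude $v(X)\sim_\SL v(0)$; I would follow that argument.

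Next, $\epsilon(v)(0)$ is a unimodular row over $A[X]$ with entries in $A_0$, so it lies in $\Um_3(A_0)$. Since $A_0$ is local, some entry of $\epsilon(v)(0)$ is a unit, and hence $\epsilon(v)(0)\sim_\E e_1$ over $A_0\subset A[X]$. Combining the two steps gives $\epsilon(v)(X)\sim_\SL e_1$, say $\epsilon(v)(X)=e_1\alpha(X)$ with $\alpha(X)\in\SL_3(A[X])$. Evaluating at $X=1$ yields $v=e_1\alpha(1)$ with $\alpha(1)\in\SL_3(A)$, so $\Um_3(A)=e_1\SL_3(A)$.
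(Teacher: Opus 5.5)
Your overall route is exactly the paper's: the Swan--Weibel homotopy, Lemma~\ref{lemma:lemma2.6} to compare $\epsilon(v)(X)$ with $\epsilon(v)(0)$, locality of $A_0$, and evaluation at $X=1$. The paper simply asserts $\epsilon(v)(X)\sim_{\SL}\epsilon(v)(0)$ from Lemma~\ref{lemma:lemma2.6}. Part (ii)(b) of Theorem~\ref{thm:theorem3.4} does the same, so there is no further argument there to follow.

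The only place you go beyond the paper is the passage from $P\cong P(0)[X]$ to an $\SL$-equivalence of rows, and one step there is wrong as written. Your $\beta$, equal to the given isomorphism $\phi$ on $P$ and sending $w\mapsto w'$, does satisfy $\epsilon(v)(0)\beta=\epsilon(v)(X)$. Now let $u=\det\beta$ and rescale by $u^{-1}$ on the complementary summand $A[X]w$. The new matrix $\beta'$ then satisfies $\epsilon(v)(0)\beta'=u^{-1}\epsilon(v)(X)$, because the functional now takes the value $u^{-1}$ on $w$. So $\beta'$ no longer carries one row to the other.

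The repair is immediate, because you already know $\epsilon(v)(0)\sim_{\E}e_1$. Write $\epsilon(v)(0)=e_1\eta$ with $\eta\in\E_3(A_0)$. Then
\[
\epsilon(v)(X)=e_1\,\mathrm{diag}(1,u^{-1},1)\,\eta\beta, \qquad \mathrm{diag}(1,u^{-1},1)\,\eta\beta\in\SL_3(A[X]).
\]
Equivalently, you can correct the determinant on the kernel side. Replace $\phi$ by $\theta\circ\phi$ for some $\theta\in\mathrm{Aut}(Q)$ of determinant $u^{-1}$; such a $\theta$ exists because $Q=\ker\epsilon(v)(0)$ is free of rank $2$. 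In fact a $\GL_3$-equivalence is all you need, since $e_1\GL_3=e_1\SL_3$ by the same trick. With this change your proof is complete and agrees with the paper's.
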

\begin{proof}
		Let $v\in\Um_{3}(A)$. Consider the Swan-Weibel's homotopy map: 
	\begin{ceqn}
		\begin{align*}
			\epsilon : A &\rightarrow A[X]\\
			a_0+a_1+a_2+\cdot\cdot\cdot &\mapsto a_0+a_1X+a_2X^2+\cdot\cdot\cdot+a_nX^n+\cdot\cdot\cdot
		\end{align*}
	\end{ceqn}
	where $a_i\in A_i$. We denote the polynomial $a_0+a_1X+a_2X^2+\cdot\cdot\cdot+a_nX^n+\cdot\cdot\cdot$ by $\epsilon (a)(X)$ for $a=a_0+a_1+a_2+\cdot\cdot\cdot \in A$. This extends component-wise to vectors, so for $v\in \Um_{3}(A)$, we note that $\epsilon (v)(X)\in \Um_{3}(A[X])$. By Lemma~\ref{lemma:lemma2.6}, we have 
	\begin{ceqn}
		\begin{align*}
			\epsilon (v)(X)\sim_\SL \epsilon (v)(0).
		\end{align*}
	\end{ceqn}
	Note that $\epsilon (v)(0)\in \Um_{3}(A_0)$ and since $A_0$ is a local ring, 
	\begin{ceqn}
		\begin{align*}
			\epsilon (v)(0)\sim_\SL e_1.
		\end{align*}
	\end{ceqn} Thus, 
	\begin{ceqn}
		\begin{align*}
			\epsilon (v)(X)\sim_\SL e_1.
		\end{align*}
	\end{ceqn}
	Evaluating at $X=1$, we obtain
	\begin{ceqn}
		\begin{align*}
			v=\epsilon (v)(1)\sim_\SL e_1.
		\end{align*}
	\end{ceqn} 
	Hence $\Um_{3}(A)=e_1\SL_{3}(A)$.  
\end{proof}

\begin{theorem}\label{thm:theorem5.1}
	Let $A=\oplus_{i\geq 0}A_i$ be a graded ring of dimension $3$, with  $\frac{1}{3!}\in A$. Assume $A_0$ is a local ring. Then $\Um_{4}(A)=e_1\Sp_{4}(A)$.
	
\end{theorem}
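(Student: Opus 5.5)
The plan is to run the argument of Theorem~\ref{thm:theorem3.4}(i) (equivalently Theorem~\ref{thm:theorem4.3}) verbatim over $A$. The two completability inputs that argument needs are now available for graded rings: Proposition~\ref{prop:proposition5.1} with $d=3$ gives $\Um_4(A)=e_1\SL_4(A)$, and Proposition~\ref{prop:proposition5.2} gives $\Um_3(A)=e_1\SL_3(A)$. All other ingredients (Vaserstein's symbol, Lemma~\ref{lemma:lemma2.4}, Lemma~\ref{lemma:lemma2.3'}) hold over an arbitrary commutative ring.

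\emph{Step 1.} Let $v\in\Um_4(A)$. By Proposition~\ref{prop:proposition5.1}, choose $\sigma\in\SL_4(A)$ with $v=e_1\sigma$. The matrix $\sigma^t\psi_2\sigma$ is alternating with Pfaffian $\det\sigma=1$, so it has the form $V(v',w')$ with $v'\cdot(w')^t=1$.

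\emph{Step 2.} By Proposition~\ref{prop:proposition5.2}, write $v'=e_1\rho$ with $\rho\in\SL_3(A)$. Then in $\W_E(A)$ we have $[V(v',w')]=[(1\perp\rho)^t\psi_2(1\perp\rho)]$, which yields $\epsilon\in\E_4(A)$ with
\[
V(v',w')=\epsilon^t(1\perp\rho)^t\psi_2(1\perp\rho)\epsilon.
\]
Using Lemma~\ref{lemma:lemma2.4}, write $\epsilon=\delta_1(1\perp\epsilon_1)^{-1}$ with $\delta_1\in\ESp_4(A)$ and $\epsilon_1\in\E_3(A)$. Then
\[
\delta:=\sigma(1\perp\epsilon_1)\delta_1^{-1}(1\perp\rho^{-1})\in\Sp_4(A),
\]
so $\sigma=\delta(1\perp\rho)\delta_1(1\perp\epsilon_1^{-1})$.

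\emph{Step 3.} Show that $e_1\sigma\sim_E e_1\delta\delta_1$, i.e.\ that the factors $(1\perp\rho)$ and $(1\perp\epsilon_1^{-1})$ can be discarded up to elementary action. Once this is done, Lemma~\ref{lemma:lemma2.3'} converts the elementary matrix into an element of $\ESp_4(A)$, giving $v\in e_1\Sp_4(A)$.

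Step 3 is the main obstacle. Earlier it used a group structure on $\Cm_4/\E_4$ in which the first row map is a homomorphism. Lemma~\ref{lemma:lemma2.1} provides this only in the relative form $\Cm_n(A,A_+)/\E_n(A,A_+)$. I would first try to bypass the group structure with the identity
\[
e_1\delta(1\perp\rho)\delta_1=e_1\delta\delta_1\cdot\big(\delta_1^{-1}(1\perp\rho)\delta_1\big).
\]
This reduces the problem to showing that $e_1\delta\delta_1$ times a conjugate of $(1\perp\rho)$ lies in the same elementary orbit. That is a normality or commutator statement, and it is exactly what the group law encodes, so I expect I cannot avoid it.

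My main attempt would therefore be to reduce to the relative situation via the homotopy $\epsilon$. Let $\sigma_0=\sigma$ mod $A_+\in\SL_4(A_0)$. Since $A_0$ is local, $\SL_4(A_0)=\E_4(A_0)$ and $\SL_3(A_0)=\E_3(A_0)$, so all degree-zero parts are elementary. By Lemma~\ref{lemma:lemma2.2} and Lemma~\ref{lemma:lemma2.3'}, $e_1\sigma_0$ lies in $e_1\ESp_4(A_0)$. Hence after multiplying $v$ by an element of $\ESp_4(A_0)\subset\Sp_4(A)$ we may assume $v\in\Um_4(A,A_+)$ and $\sigma\in\SL_4(A,A_+)$. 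Similarly, I would arrange $\rho\equiv I$ modulo $A_+$ by replacing $\rho$ with $\rho\rho_0^{-1}$, adjusting by elements of $\E_3(A_0)$. Now every factor in Step 2 is congruent to the identity modulo $A_+$, possibly after further elementary adjustments over $A_0$. Lemma~\ref{lemma:lemma2.1} then applies in $\Cm_4(A,A_+)/\E_4(A,A_+)$ and yields $[e_1\sigma]=[e_1\delta\delta_1]$ there, as in the earlier proofs.

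If the bookkeeping of degree-zero parts becomes messy, the fallback is to use Swan–Weibel directly. Apply $\epsilon$ to $v$ to obtain $\epsilon(v)(X)\in\Um_4(A[X])$, reduce to $A_0$-coefficients, run the argument of Theorem~\ref{thm:theorem4.3} for $A_0[X]$ only for the base $\epsilon(v)(0)$, and try to show $\epsilon(v)(X)\in\epsilon(v)(0)\Sp_4(A[X])$ before evaluating at $X=1$. I regard this last transfer as the weakest point of the fallback.
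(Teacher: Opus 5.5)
You correctly flag that Lemma~\ref{lemma:lemma2.1} is only relative, but your proposal does not close the resulting gap in Step~3. Your Steps~1 and~2 are exactly the paper's argument. For Step~3 the paper only invokes Lemma~\ref{lemma:lemma2.1}: it splits $[e_1\sigma]$ into four factors and concludes $[e_1\sigma]=[e_1\delta\delta_1]$ in $\Cm_4(A)/\E_4(A)$. As stated, that lemma only concerns $\SL_4(A,A_+)$, so a transfer to the absolute orbit set really is needed.

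Your sentence ``every factor in Step~2 is congruent to the identity modulo $A_+$, possibly after further elementary adjustments over $A_0$'' is where the argument fails. Suppose you arrange $\sigma\equiv I$ and $\rho\equiv I$ (you need $\rho_0^{-1}\rho$, not $\rho\rho_0^{-1}$, to keep $e_1\rho=v'$). The matrix $\epsilon\in\E_4(A)$ coming from the equality in $\W_E(A)$ still only satisfies $\epsilon_0\in\E_4(A_0)\cap\Sp_4(A_0)$. Since $\delta=\sigma\epsilon^{-1}(1\perp\rho^{-1})$, you get $\delta\equiv\epsilon_0^{-1}\pmod{A_+}$, however $\epsilon_1$ and $\delta_1$ are adjusted. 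Removing $\epsilon_0$ forces a conjugation, e.g.\ replacing $\sigma$ by $\epsilon_0\sigma\epsilon_0^{-1}$. Comparing first rows before and after is then exactly the commutator statement you say you cannot avoid but never supply.

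The Swan--Weibel fallback does not help either. The specialization at $X=0$ lies in $\Um_4(A_0)=e_1\ESp_4(A_0)$ trivially, since $A_0$ is local. All the difficulty is a symplectic transfer along $X$, for which the paper provides no tool.

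The missing ingredient is Suslin's normality theorem ($\E_4(A)\trianglelefteq\SL_4(A)$), together with the splitting $\E_4(A)=\E_4(A,A_+)\rtimes\E_4(A_0)$ coming from the retraction $A\to A_0$. Together they show that commutators of elements of $\E_4(A)$ with elements of $\SL_4(A,A_+)$ lie in $\E_4(A)\cap\SL_4(A,A_+)=\E_4(A,A_+)$. With this, no bookkeeping of the individual factors is needed.

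Since $\delta_1$ and $1\perp\epsilon_1^{-1}$ are elementary, it suffices to show $e_1\delta(1\perp\rho)\sim_E e_1\delta$. Write $\rho=\rho'\rho_0$ and $\delta=\delta'\delta_0$, where $\rho_0\in\SL_3(A_0)=\E_3(A_0)$, $\delta_0\in\SL_4(A_0)=\E_4(A_0)$, and $\rho'$, $\delta'$ are congruent to the identity modulo $A_+$. Then $\delta(1\perp\rho)=\delta'\,c\,(1\perp\rho')\,\delta_0(1\perp\rho_0)$ with $c=\delta_0(1\perp\rho')\delta_0^{-1}(1\perp\rho')^{-1}\in\E_4(A,A_+)$. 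Lemma~\ref{lemma:lemma2.1} now gives $[e_1\delta'c(1\perp\rho')]=[e_1\delta'][e_1c][e_1(1\perp\rho')]=[e_1\delta']$ in $\Cm_4(A,A_+)/\E_4(A,A_+)$. Hence $e_1\delta(1\perp\rho)\sim_E e_1\delta'\sim_E e_1\delta$, and Lemma~\ref{lemma:lemma2.3'} finishes the proof exactly as in the paper.
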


\begin{proof}
		Let $v\in \Um_4(A)$. In view of Proposition \ref{prop:proposition5.1}, there exists $\sigma \in \SL_4(A)$ such that 
	\begin{ceqn}
		\begin{align*}
			v=e_1\sigma.
		\end{align*}
	\end{ceqn}
	Note that $\sigma^t\psi_2\sigma$ is an alternating matrix, therefore we have
	\begin{ceqn} 
		\begin{align*}
			\sigma^t\psi_2\sigma=V(v',w')
		\end{align*}
	\end{ceqn}
	for some $v',w'\in \Um_3(A)$ such that $v'\cdot (w')^t=1$. 
	Now by Proposition \ref{prop:proposition5.2}, $\Um_3(A)=e_1\SL_3(A)$, there exists $\rho\in \SL_3(A)$ such that 
	\begin{ceqn}
		\begin{align*}
			v'=e_1\rho.
		\end{align*}
	\end{ceqn}
	By \cite[Lemma 5.1,~Theorem 5.2]{suslin1976serre}, in $W_E(A)$  we have
	\begin{ceqn} 
		\begin{align*}
			[V(v',w')]=&[V(e_1\rho,e_1(\rho^{-1})^t)]\\
			=&[(1\perp \rho)^t\psi_2(1\perp \rho)].
		\end{align*}
	\end{ceqn}
	Thus there exists $\epsilon \in \E_4(A)$ such that
	\begin{ceqn}
		\begin{align*}
			V(v',w')=\epsilon^t(1\perp \rho)^t\psi_2(1\perp \rho)\epsilon.
		\end{align*}
	\end{ceqn}
	In view of Lemma \ref{lemma:lemma2.4}, there exists $\epsilon_1\in \E_3(A)$ such that $\epsilon=\delta_1(1\perp \epsilon_1)^{-1}$ for some $\delta_1\in \ESp_4(A)$.
	Therefore, 
	\begin{ceqn}
		\begin{align*}
			\sigma^t\psi_2\sigma= (1\perp \epsilon_1^{-1})^t\delta_1^t(1\perp \rho)^t\psi_2(1\perp \rho)\delta_1(1\perp \epsilon_1^{-1}).
		\end{align*}
	\end{ceqn}
	Thus,
	\begin{ceqn}
		\begin{align*}
			(1\perp \rho^{-1})^t(\delta_1^{-1})^t(1\perp \epsilon_1)^t\sigma^t\psi_2\sigma(1\perp \epsilon_1)\delta_1^{-1}(1\perp \rho^{-1})=\psi_2.
		\end{align*}
	\end{ceqn}
	Thus $\sigma(1\perp \epsilon_1)\delta_1^{-1}(1\perp \rho^{-1})\in \Sp_4(A)$. Let $\delta=\sigma(1\perp \epsilon_1)\delta_1^{-1}(1\perp \rho^{-1})$. Therefore,
	\begin{ceqn} 
		\begin{align*}
			\sigma=\delta(1\perp \rho)\delta_1(1\perp \epsilon_1^{-1}).
		\end{align*}
	\end{ceqn}
	In view of Lemma \ref{lemma:lemma2.1}, we have 
	\begin{ceqn}
		\begin{align*}
			[e_1\sigma]&=[e_1(\delta(1\perp \rho)\delta_1(1\perp \epsilon_1^{-1}))]\\
			&=[e_1\delta]\ast [e_1(1\perp \rho)]\ast [e_1\delta_1]\ast [e_1(1\perp \epsilon_1^{-1})].
		\end{align*}
	\end{ceqn}
	Thus $[e_1\sigma]=[e_1\delta\delta_1]$ in $\frac{\Cm_4(A)}{\E_4(A)}$. Therefore, there exists $\epsilon'\in\E_4(A)$ such that
	\begin{ceqn} 
		\begin{align*}
			e_1\sigma=(e_1\delta\delta_1)\epsilon'.
		\end{align*}
	\end{ceqn}
	In view of Lemma \ref{lemma:lemma2.3'}, there exists $\epsilon_1'\in \ESp_4(A)$ such that 
	\begin{ceqn}
		\begin{align*}
			(e_1\delta\delta_1)\epsilon'=(e_1\delta\delta_1)\epsilon_1'.
		\end{align*} 
	\end{ceqn}
	Thus $e_1\sigma=e_1(\delta\delta_1\epsilon_1')$. Note that $\delta\delta_1\epsilon_1'\in \Sp_4(A)$. Therefore, $v=e_1\sigma =e_1(\delta\delta_1\epsilon_1')\in e_1\Sp_4(A)$.
\end{proof}

\noindent \textbf{Acknowledgments.}\\

 We thank the referee for his thorough reading and valuable suggestions to improve the manuscript.\\
This research was conducted at Indian Institute of Technology (IIT Mandi), and the authors would like to acknowledge the invaluable resources and facilities provided by the institute. The research of "Gopal Sharma" is funded by the University Grants Commission (UGC) with Fellowship No. 221610163684/(CSIRNETJUNE2022). Second author acknowledges DST INSPIRE (DST/INSPIRE/04/2021/002849) and start up research grant (SRG/2022/000056) for their support. The second author also thanks IIT Mandi for their seed grant.\\

% \bibliographystyle{cas-model2-names}
% Loading bibliography database
\bibliographystyle{abbrv}

\bibliography{references}

%\vskip3pt

\end{document}